\renewcommand\subsubsection{\@secnumfont}{\bfseries}%
\renewcommand\subsubsection{\@startsection{subsubsection}{3}
  \z@{.5\linespacing\@plus.7\linespacing}{-.5em}%
  {\normalfont\bfseries}}
  \numberwithin{equation}{section} 
\def\paragraph{\@startsection{paragraph}{4}%
  \z@\z@{-\fontdimen2\font}%
  {\bfseries\itshape}}
\DeclareMathOperator{\one}{\mathbbm{1}} %%%% indicator function
\newtheoremstyle{newplain}%
  {}{}
  {\itshape}{}
  {\bfseries}{.}
  { }{\thmname{#1}\thmnumber{ #2}\thmnote{ (#3)}}
\theoremstyle{newplain}
    \newtheorem{theorem}{Theorem}[section]
    \newtheorem{lemma}[theorem]{Lemma}
    \newtheorem{proposition}[theorem]{Proposition}
\theoremstyle{definition} % For roman text in the body
    \newtheorem{definition}[theorem]{Definition}
    \newtheorem{fact}[theorem]{Fact}
\DeclareMathOperator{\gauss}{\mathrm{N}}
\DeclareMathOperator{\Ber}{\mathrm{Ber}}
\DeclareMathOperator{\Rr}{\mathbb{R}}
\DeclareMathOperator{\N}{\mathbb{N}}
\DeclareMathOperator{\ESD}{ESD}
\DeclareMathOperator{\Tr}{Tr}
\DeclareMathOperator{\tr}{tr}
\DeclareMathOperator{\diag}{diag}
\DeclareMathOperator{\Id}{I}
\DeclareMathOperator{\dd}{d}
\DeclareMathOperator{\Var}{Var}
\DeclareMathOperator{\St}{\mathrm{S}}
\newcommand{\G}{\mathbb{G}}
\newcommand{\E}{\mbox{${\mathbb E}$}}
\newcommand{\A}{\mathbf{A}}
\newcommand{\bigO}{\mbox{${\mathrm O}$}}
\newcommand{\vep}{\varepsilon}
\newcommand{\xvec}{\mathbf{x}}
\newcommand{\Xvec}{\mathbf{X}}
\newcommand{\bfd}{\mathbf{d}}
\newcommand{\bfD}{\mathbf{\Delta}}
\newcommand {\prob}{\mathbb{P}}
\newcommand{\e}{\mathrm{e}}
\newcommand{\M}{\mathbf{M}}
\newcommand{\C}{\mathbb{C}}
\newcommand{\bA}{\mathbf{A}}%standard adjacency
\newcommand{\Ver}{\mathbf{V}}
\global\long\def\ep{\mathbf{E}} %%% Law of paretos, expectation
\global\long\def\pr{\mathbf{P}} %%% Law of paretos
\begin{document}

\title[KBRG]{Spectral properties of the Laplacian of Scale-Free Percolation models}

%    author one information
%%    author two  information
%%    author three  information

\author[R.~S.~Hazra]{Rajat Subhra Hazra}
 \address{University of Leiden, Mathematical Institute, Einsteinweg 55,
2333 CC Leiden, The Netherlands.}
\email{r.s.hazra@math.leidenuniv.nl}
\author[N. Malhotra]{Nandan Malhotra}
\address{University of Leiden, Mathematical Institute, Einsteinweg 55,
2333 CC Leiden, The Netherlands.}
\email{n.malhotra@math.leidenuniv.nl}

\begin{abstract}
    We consider scale-free percolation on a discrete torus $\mathbf{V}_N$ of size $N$. Conditionally on an i.~i.~d.~sequence of {Pareto} weights $(W_i)_{i\in \mathbf{V}_N}$ with tail exponent $\tau-1>0$, we connect any two points $i$ and $j$ on the torus with probability
		$$p_{ij}= \frac{W_iW_j}{\|i-j\|^{\alpha}} \wedge 1$$ for some parameter $\alpha>0$.
		We focus on the (centred) Laplacian operator of this random graph and study its empirical spectral distribution. We explicitly identify the limiting distribution when $\alpha<1$ and $\tau>3$, in terms of the spectral distribution of some non-commutative unbounded operators. 
\end{abstract}
\keywords{Empirical spectral distribution, free probability, scale-free percolation, spectrum of random matrices}
\subjclass{05C80, 46L54, 60B10, 60B20}
\date{\today}
\maketitle
%\setcounter{tocdepth}{1}
%\tableofcontents

\section{Introduction}
In recent years, many random graph models have been proposed to model real-life networks. These models aim to capture three key properties that real-world networks exhibit: scale-free nature of the degree distribution, small-world property, and high clustering coefficients \citep{remcovol2}. It is generally difficult to find random graph models which incorporate all the three features. Classical random graph models typically fail to capture scale-freeness, small-world behavior, and high clustering simultaneously. For instance, the Erdős-Rényi model only exhibits the small-world property, while models like Chung-Lu, Norros-Reittu, and preferential attachment models are scale-free (\cite{chung2002average, barabasi1999emergence} and small-world but have clustering coefficients that vanish as the network grows. In contrast, regular lattices have high clustering but large typical distances. The Watts-Strogatz model (\cite{watts1998collective}) was an early attempt to create a network with high clustering and small-world features, but it does not produce scale-free degree distributions.

Scale-free percolation, introduced in \cite{Deijfen:Remco:Hoogh}, blends ideas from long-range percolation (see e.g.\ \cite{berger2002transience}) with inhomogeneous random graphs such as the Norros--Reittu model. In this framework, vertices are positioned on the $\mathbb{Z}^d$ lattice, and each vertex $x$ is independently assigned a random weight $W_x$. These weights follow a power-law distribution: 
\[
\mathbb{P}(W > w) = w^{-(\tau - 1)} L(w),
\]
where $\tau > 1$ and $L(w)$ is a slowly varying function at infinity.

Edges between pairs of vertices $x$ and $y$ are added independently, with a probability that increases with the product of their weights and decreases with their Euclidean distance. The edge probability is given by
\begin{equation}\label{eq:sfp}
p_{xy} = 1 - \exp\left(-\lambda \frac{W_x W_y}{\|x - y\|^\alpha}\right),
\end{equation}
where $\lambda, \alpha > 0$ are model parameters and $\|\cdot\|$ denotes the Euclidean norm. This model has been proposed as a suitable representation for certain real-world systems, such as interbank networks, where both spatial structure and heavy-tailed connectivity distributions are relevant (\cite{DHW2015}). Various properties of the model are now well known and we refer to the articles by \cite{jorritsma2024large, cipriani2024scale, CHMS2025, heydenreich2017structures, Dalmau2021} for further references. 

In recent times, there has been a lot of interest in the models which have connections probabilities similar to \eqref{eq:sfp}. Kernel-based spatial random graphs encompass a wide variety of classical random graph models where vertices are embedded in some metric space. In their simplest form (see~\cite{jorritsma2023cluster} for a more complete exposition) they can be defined as follows. Let $V$ be the vertex set of the graph and sample a collection of weights $(W_i)_{i \in V}$, which are independent and identically distributed (i.~i.~d.), serving as marks on the vertices. %These weights are typically assumed to follow a power-law distribution.
	Conditionally on the weights, two vertices $i$ and $j$ are connected by an undirected edge with probability
	\begin{equation}\label{eq:connection}
		\mathbb{P}\left(i\leftrightarrow j \mid W_i, W_j \right) = \kappa(W_i, W_j)\|i-j\|^{-\alpha} \wedge 1 \, ,
	\end{equation}
	where $\kappa$ is a symmetric kernel, $\|i-j\|$ denotes the distance between the two vertices in the underlying metric space and $\alpha>0$ is a constant parameter. In a recent article, the present authors with A. Cipriani and M. Salvi (\cite{CHMS2025}) proved the spectral properties of the adjacency matrix when $\alpha<d$ and the weights have a finite mean. In the above setting, the model was considered on a torus of side length $N$ so that the adjacency operator as a matrix was easier to describe. In this article, we aim to extend this study to the case of a Laplacian matrix. Although our approach would extend to general kernel-based models, we shall stick to the product form kernel, that is, $\kappa(x,y)=xy$, so that the ideas can be clearly presented. It is one of the few cases where the limiting distribution can be explicitly described. 

    The Laplacian of a graph with $N$ vertices is defined as $A_N- D_N$ where $A_N$ is the adjacency matrix and $D_N$ is the diagonal matrix where the $i$-th diagonal entry corresponds to the $i$-th degree. When the entries of the matrix are not restricted to $0$ or $1$, the matrix is also referred to as the \emph{Markov matrix} (\cite{BDJ2006, bordenave2014spectrum}). The graph Laplacian serves as the discrete analogue of the continuous Laplacian, essential in diffusion theory and network flow analysis. The Laplacian matrix has several key applications: The Kirchhoff Matrix-Tree Theorem relates the determinant of the Laplacian to the count of spanning trees in a graph (\cite{chung1997spectral}), the multiplicity of the zero eigenvalue indicates the number of connected components (\cite{chung1997spectral}), the second-smallest eigenvalue, known as the Fiedler value or the algebraic connectivity, measures the graph's connectivity; higher values signify stronger connectivity \cite{de2007old}. For a comprehensive overview of spectral methods in graph theory, refer to Chung's monograph \cite{chung1997spectral} and Spielman's book \cite{spielman2012spectral}. In modern machine learning, spectral techniques are pivotal in spectral clustering algorithms, where the techniques use the Laplacian eigenvalues and eigenvectors for dimensionality reduction before applying clustering algorithms like $k$-means (\cite{abbe2020graph, abbe2018community}). It is particularly effective for detecting clusters that are not linearly separable. Recent advancements integrate spectral clustering with graph neural networks to enhance graph pooling operations (\cite{bianchi2020spectral}). Spectral algorithms are also crucial for identifying communities within networks by analysing the spectral properties of the graph (\cite{chung1997spectral}).

\cite{BDJ2006} established that for large symmetric matrices with independent and identically distributed entries, the empirical spectral distribution (ESD) of the corresponding Laplacian matrix converges to the free convolution of the semicircle law and the standard Gaussian distribution. In the context of sparse Erdős–Rényi graphs, \cite{Huang:Landon} studied the local law of the ESD of the Laplacian matrix. They demonstrated that the Stieltjes transform of the ESD closely approximates that of the free convolution of the semicircle law and a standard Gaussian distribution down to the scale $N^{-1}$. Additionally, they showed that the gap statistics and averaged correlation functions align with those of the Gaussian Orthogonal Ensemble in the bulk. \cite{ding2010spectral} investigated the spectral distributions of adjacency and Laplacian matrices of random graphs, assuming that the variance of the entries of an $N\times N$ adjacency matrix depends only on $N$. They established the convergence of the ESD of these matrices under such conditions. These results of the Erd\H{o}s-R\'enyi random graphs were extended to the inhomogeneous setting by \cite{CHHS}. In a recent work, \cite{Chatterjee:Hazra} derived a combinatorial way to describe the limiting moments for a wide variety of random matrix models with a variance profile.

\paragraph{Our contribution:} The empirical spectral distribution of the (centred) Laplacian of a graph that incorporates spatial distance has not been studied before. For example, we are not aware of a result that describes the spectral properties of the Laplacian for the long-range percolation model. Our main contribution is that we establish this result for the scale-free percolation model on the torus. We restrict ourselves to the dense regime, that is, $\alpha < 1$. We show that under mild assumptions on the weights (having finite variance), we establish the existence of the limiting distribution. It turns out to be a distribution that involves the Gaussian, the semicircle, and the Pareto distribution. In a symbolic (and rather informal) way, it is given by the spectral law of $$W^{1/2}SW^{1/2}+ \sqrt{m_1} W^{1/4}G W^{1/4},$$
where $W$ is an unbounded operator with spectral law given by the Pareto distribution, $S$ is a bounded compact operator whose spectral law is the semicircle law, and $G$ is an unbounded operator whose law is given by the Gaussian distribution. Finally, $m_1$ is the first moment of $W$. The interaction between these operators comes from the fact that in the non-commutative space, $\{W, G\}$ is a commutative algebra, freely independent of $S$. Similar results have been established when the weights are bounded and degenerate, and no spatial distances are involved (\cite{Chatterjee:Hazra} and \cite{CHHS}). The present work extends the results to settings that involve random heavy-tailed weights and spatial distances. Both the conditions require many new techniques.

\paragraph{\bf Outline of the article:} In section \ref{section:set-up} we explicitly describe the setup of the model and state our main results. In Theorem \ref{theorem:Laplacian limiting measure} we show the existence of the limiting spectral distribution, and in Theorem \ref{theorem: Laplacian measure identification}, we identify the measure and state some of its properties.  In Section \ref{section:Pre-moment method} we first introduce a Gaussianised version of the matrix, and this helps us to simplify the variance profile. We then truncate the weights and decouple the diagonal, which allows us to apply the moment method. In Section \ref{sec:moment method Laplacian} we identify the limiting moments of the decoupled Laplacian and show that it does not depend on the spatial parameter $\alpha>0$, which is crucial in the identification of the limiting measure of the original Laplacian. Finally, in Section \ref{sec:final proof Laplacian} we identify the limiting measure using results from free probability. 
In Appendix \ref{appendix:prel_lemmas} we provide references to some of the results we use in our proofs, which are collections of results from other articles and are stated here for completeness.

\section{Setup and main results}\label{section:set-up}
In this section we describe the setup of the model and also state the main results. 
\subsection{Setup}
\begin{itemize}[leftmargin=*]
				\itemsep0.3em
    \item[(a)] {\bf Vertex set:} the vertex set is $\Ver_N\coloneqq \{1,\,2,\,\ldots,\,N\}$. The vertex set is equipped with torus the distance $\|i - j\| $, where
    \[
    \|i - j\| =  |i - j| \wedge (N - |i - j|).
    \]

    \item[(b)] {\bf Weights:} the weights $(W_i)_{i \in \Ver_N}$ are i.~i.~d.\ random variables sampled from a Pareto distribution $W$ (whose law we denote by $\mathbf{P}$) with parameter $\tau - 1$, where $\tau > 1$. That is,
    \begin{equation}\label{eq:paretolaw}
    \mathbf{P}(W > t) = t^{-(\tau - 1)}\one_{\{t \geq 1\}}+\one_{\{t < 1\}}.
    \end{equation}

        %\item[(d)] {\bf Profile function:} Let $f: [0, \infty) \to [0, 1]$ be a non-decreasing function.
    \item[(c)] {\bf Long-range parameter:} $\alpha > 0$ is a parameter which controls the influence of the distance between vertices on their connection probability.

    \item[(d)] {\bf Connectivity function:} conditional on the weights, each pair of distinct vertices $i$ and $j$ is connected independently with probability $P^W(i\leftrightarrow j)$ given by 
    \begin{equation}\label{connection_proba}
    P^W(i\leftrightarrow j) \coloneqq p_{ij}\coloneqq \mathbb{P}(i \leftrightarrow j \mid W_i, W_j) = \frac{W_i W_j}{\|i - j\|^\alpha} \wedge 1.
    \end{equation}
    
    We will be using the short-hand notation $p_{ij}$ for convenience. That is,
    Note that the graph does not have self-loops. 
\end{itemize}

In what follows, we denote by $\mathbb{P}$ the joint law of the weights and the edge variables. Note that $\mathbb{P}$ depends on $N$, but we will omit this dependence for simplicity. Let $\E, \mathbf{E}$, and $E^W$ denote the expectation with respect to $\prob, \mathbf{P}$, and $P^W$ respectively.

The associated graph is connected, as nearest neighbours with respect to the torus distance are always linked. Let us denote the random graph generated by our choice of edge probabilities by $\mathbb{G}_N$. Let $\mathbb{A}_{\mathbb{G}_N}$ denote the adjacency matrix (operator) associated with this random graph, defined as
\[
\mathbb{A}_{\mathbb{G}_N}(i,j) = 
\begin{cases} 
1 & \text{if } i \leftrightarrow j, \\ 
0 & \text{otherwise}.
\end{cases}
\]
Since the graph is finite and undirected, the adjacency matrix is always self-adjoint and has real eigenvalues.
Let $$\mathbb{D}_{\mathbb{G}_N}=\diag( d_1, \cdots, d_N)$$
where $d_i$ denotes the degree of the vertex $i$ and in this case given by 
\[
d_i=\sum_{j\neq i}\mathbb{A}_{\mathbb{G}_N}(i,j).
\]
We will consider the Laplacian of the matrix, which is denoted as follows:
$${\Delta}_{\mathbb{G}_N}= \mathbb{A}_{\mathbb{G}_N}- \mathbb{D}_{\mathbb{G}_N}.$$

\noindent
In general, when $0<\alpha < 1$, the eigenvalue distribution requires scaling in order to observe meaningful limiting behavior. In \cite{CHMS2025}, it was shown that an appropriate scaling of the adjacency matrix, under which the convergence of the bulk eigenvalue distribution can be studied, is given by 
\begin{equation}\label{chap5:eq:def_c}
c_N = \frac{1}{N} \sum_{i \neq j \in \Ver_N}\frac{1}{\|i-j\|^{\alpha}} \sim c_0 N^{1-\alpha},
\end{equation}
where $c_0$ is a positive constant and $0<\alpha<1$. The scaled adjacency matrix is then defined as
\begin{equation}\label{eq:scaledadjacency}
\A_N \coloneqq \frac{\mathbb{A}_{\mathbb{G}_N}}{\sqrt{c_N}}.
\end{equation}
We define the corresponding (scaled) Laplacian as
$$\mathbf{\Delta}_N= \A_N- \mathbf{D}_N,$$
where $\mathbf{D}_N$ is given by
$\mathbf{D}_N=\diag(\mathbf{d}_1, \cdots, \mathbf{d}_N)$
with \[
\mathbf{d}_i= \sum_{k\neq i} \A_N(i,k).
\]

The empirical measure that assigns a mass of $1/N$ to each eigenvalue of the $N \times N$ random matrix $\mathbf{M}_N$ is called the Empirical Spectral Distribution (ESD) of $\mathbf{M}_N$, denoted as
\[
\ESD\left(\mathbf{M}_N\right) \coloneqq \frac{1}{N} \sum_{i=1}^{N} \delta_{\lambda_i},
\]
where $\lambda_1 \leq \lambda_2 \leq \ldots \leq \lambda_{N}$ are the eigenvalues of $\mathbf{M}_N$.
We are interested in the centred Laplacian matrix for the bulk distribution. So define
\begin{equation}\label{eq:centredLap}
\mathbf{\Delta}_N^\circ=\mathbf{\Delta}_N-\E[\mathbf{\Delta}_N]
\end{equation}
where $\E[\mathbf{\Delta}_N](i,j)= \E[\mathbf{\Delta}_N(i,j)]$. If we define $\mathbf{A}_N^\circ= \A_N- \E[\A_N]$ and $\mathbf{D}_N^\circ$ is the diagonal matrix $\diag(\bfd_1^\circ, \ldots, \bfd_N^\circ)$ where $\bfd_i^{\circ}=\sum_{k\neq i} \mathbf{A}_N^\circ(i,k)$, then it is easy to see that
$$\bfD_N^\circ= \A_N^\circ- \mathbf{D}_N^\circ.$$

In this article we will be interested in understanding the behaviour of $\ESD(\bfD_N^\circ)$ as $N\to \infty$.

\subsection{Main Results}
The Lévy-Prokhorov distance $d_L: \mathcal{P}(\mathbb{R})^2 \to [0, +\infty)$ between two probability measures $\mu$ and $\nu$ on $\mathbb{R}$ is defined as
\[
d_L(\mu, \nu)
    := \inf \big\{\varepsilon > 0 \mid \mu(A) \leq \nu\left(A^\varepsilon\right) + \varepsilon \,\text{ and }\, \nu(A) \leq \mu\left(A^\varepsilon\right) + \varepsilon \quad \forall \,A \in \mathcal{B}(\mathbb{R})\big\},
\]
where $\mathcal{B}(\mathbb{R})$ denotes the Borel $\sigma$-algebra on $\mathbb{R}$, and $A^\varepsilon$ is the $\varepsilon$-neighbourhood of $A$. For a sequence of random probability measures $(\mu_N)_{N \geq 0}$, we say that
\[
\lim_{N \to \infty} \mu_N = \mu_0 \text{ in } \mathbb{P}\text{-probability}
\]
if, for every $\varepsilon > 0$,
\[
\lim_{N\to\infty}\mathbb{P}(d_L(\mu_N, \mu_0) > \varepsilon)= 0.
\]
Our first main result is existential, and is as follows. 
\begin{theorem}\label{theorem:Laplacian limiting measure}
Consider the random graph $\G_N$ on $\Ver_N$ with connection probabilities given by \eqref{connection_proba} with parameters $\tau>3$ and $0<\alpha< 1$.  Let $\ESD(\bfD_N^\circ)$ be the empirical spectral distribution of $\bfD_N^\circ$ defined in \eqref{eq:centredLap}. Then there exists a deterministic measure $\nu_{\tau}$ on $\mathbb R$ such that
    $$
    \lim_{N\to \infty}\ESD(\bfD_N^o)=\nu_{\tau}\qquad\text{ in $\,\prob$--probability}\,.
    $$
\end{theorem}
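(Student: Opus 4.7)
The plan is to prove existence of $\nu_\tau$ via the method of moments, after a sequence of approximations that reduce $\bfD_N^\circ$ to a tractable Gaussian matrix model. The target is to show that for each integer $k\geq 1$ the expected moments $\frac{1}{N}\E[\Tr(\bfD_N^\circ)^k]$ converge to deterministic limits $\beta_k$ satisfying Carleman's condition, and that the random moments concentrate around these limits in $\prob$-probability. Together with the tightness coming from $\mathbf{E}[W^2]<\infty$ (guaranteed by $\tau>3$), this yields convergence of $\ESD(\bfD_N^\circ)$ to a unique deterministic measure.

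Three preliminary reductions are needed, each preserving $\ESD(\bfD_N^\circ)$ up to $\lito(1)$ in Lévy-Prokhorov distance. First, a \emph{Gaussianisation} step: each centred Bernoulli entry $\mathbf{A}_N^\circ(i,j)$ is replaced by a centred Gaussian with the same variance $p_{ij}(1-p_{ij})/c_N$. A Lindeberg-type third-moment comparison, analogous to the one used in \cite{CHMS2025} for the adjacency case, transfers convergence of moments from the Bernoulli to the Gaussian model since $c_N\to\infty$. Second, a \emph{weight truncation}: replace $W_i$ by $W_i\wedge K$ and use a Hoffman-Wielandt-type inequality together with $\mathbf{E}[W^2]<\infty$ to bound the resulting ESD perturbation; the truncation level $K$ is sent to infinity after $N\to\infty$. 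Third, a \emph{diagonal decoupling}: replace each diagonal entry $\bfd_i^\circ$ by an independent centred Gaussian $\gamma_i$ of variance $\sigma_i^2=\sum_{k\neq i}\Var(\mathbf{A}_N(i,k))$. The decoupling error is controlled by a resolvent swap combined with a second-moment estimate: the dense scaling $c_N\sim c_0 N^{1-\alpha}$ ensures each $\sigma_i^2$ is of order $W_i\, m_1$, while the covariance between $\bfd_i^\circ$ and any single off-diagonal entry is only $\bigO(1/N)$, so the swap is of low effective rank relative to its Frobenius norm.

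With the decoupled Gaussian model in hand, the moment method reduces to a sum over closed walks $i_0\to i_1\to\cdots\to i_k=i_0$ on $\Ver_N$ with two kinds of steps: off-diagonal Gaussian edges of variance $\frac{W_{i_r}W_{i_{r+1}}}{c_N\|i_r-i_{r+1}\|^\alpha}\wedge\frac{1}{c_N}$, and diagonal contributions of variance $\sigma_{i_r}^2$. Taking expectations via the Gaussian Wick rule gives a sum indexed by pair partitions of the $k$ steps, weighted by products of the variance profile and weight moments. The crucial observation, and the main obstacle, is that when $\alpha<1$ the spatial sums factorise in the limit: for fixed $i$ the normalised sum $\frac{1}{c_N}\sum_{j\neq i}\|i-j\|^{-\alpha}$ concentrates around $1/N$, so the spatial profile effectively disappears and the limiting moments turn out to be independent of $\alpha$. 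Making this precise requires a careful index-collision analysis that separates pair partitions whose spatial sums survive in the limit from those that become negligible.

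To upgrade mean convergence to $\prob$-probability convergence, I would estimate the variance $\Var\bigl(\frac{1}{N}\Tr(\bfD_N^\circ)^k\bigr)$ by a parallel walk-counting on two coupled closed walks of length $k$ and show that the non-factorising cross-terms vanish at rate $\bigO(1/N)$, yielding concentration via Chebyshev. Carleman's condition for $(\beta_k)$ follows because all moments of $W$ of order strictly less than $\tau-1$ are finite and the pair-partition count grows at most as $(k-1)!!$, so $\beta_k^{1/k}$ grows at most polynomially. Reversing the three reductions in order then establishes $\lim_{N\to\infty}\ESD(\bfD_N^\circ)=\nu_\tau$ in $\prob$-probability. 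The most delicate technical bookkeeping concerns the interaction between the decoupled diagonal Gaussians and the off-diagonal variance profile in the pair-partition expansion, which already foreshadows the non-commutative structure $\{W,G\}$ free from $S$ identified in the companion theorem.
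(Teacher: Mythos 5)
Your overall architecture (Gaussianise, truncate, decouple the diagonal, then a Wick/pair-partition moment computation whose limit is $\alpha$-independent) matches the paper's strategy, but the way you close the argument has a genuine gap: you claim that $\frac{1}{N}\E[\Tr(\bfD_N^\circ)^k]$ converges to finite limits $\beta_k$ satisfying Carleman's condition, justified by finiteness of the moments of $W$ of order below $\tau-1$. This cannot work for the \emph{untruncated} model. The limiting law $\nu_\tau$ is the spectral law of $T_W^{1/2}T_sT_W^{1/2}+\E[W]\,T_W^{1/4}T_gT_W^{1/4}$ with $T_W$ Pareto of exponent $\tau-1<\infty$; its $2k$-th moment contains terms proportional to $\mathbf{E}[W^k]$, which is infinite once $k\geq \tau-1$. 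So for large $k$ the limits $\beta_k$ are not finite, Carleman's condition fails, and $\nu_\tau$ is not moment-determined. The moment method can only be run on the weight-truncated (hence bounded-variance-profile) model, producing measures $\nu_{\tau,m}$ with all moments finite; the truncation must then be removed at the level of measures, via the Hoffman--Wielandt bound (uniform in $N$, of order $m^{2-\tau}$) combined with an iterated-limit consistency lemma (in the paper, the Chakrabarty--Hazra--Sarkar fact, Lemma \ref{lemma:slutsky}) that guarantees the existence of $\nu_\tau=\lim_m\nu_{\tau,m}$ and identifies it as the limit of the untruncated ESDs. Your proposal gestures at "sending $K\to\infty$ after $N\to\infty$" but then reverts to a Carleman argument for the untruncated $\beta_k$, which is exactly the step that fails.

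A related structural consequence: the transfer back from the Gaussian to the Bernoulli model cannot be done by comparing polynomial moments of $\bfD_N^\circ$, since those are not available in the limit. The paper's Gaussianisation (Lemma \ref{lemma:Laplacian Gaussianisation}) is formulated for bounded smooth functions of $\Re H_z$ and $\Im H_z$, i.e.\ at the level of Stieltjes transforms, and the final step of the proof converts Stieltjes-transform convergence into weak convergence in probability; your Lindeberg swap should be set up the same way rather than as a "transfer of moment convergence". Two smaller points: the normalisation fact you invoke is off by a factor --- by torus symmetry $\frac{1}{c_N}\sum_{j\neq i}\|i-j\|^{-\alpha}=1$ exactly (not $\approx 1/N$), which is precisely why the diagonal variances are $\approx W_i\,\mathbf{E}[W]$; and the decoupling step is cleanest not via a heuristic "low effective rank resolvent swap" but via the trace-moment comparison of \cite{BDJ2006} (Lemma \ref{lemma:decoupling}), which is what legitimises replacing $\mathbf{D}_N^\circ$ by an independent Gaussian diagonal inside the moment computation.
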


The characterisation of $\nu_\tau$ is achieved by results from the theory of free probability. For convenience, we state some technical definitions. We refer the readers to \cite[Chapter 5]{anderson2010introduction} and \cite{mingo2017free} for further details.
\begin{definition}
Let $(\mathcal{A}, \varphi )$ be a $W^*$-probability space, where $\mathcal{A}$ is a $C^*$-algebra of bounded operators on a Hilbert space (closed in the weak operator topology), and $\varphi$ is a state. A densely defined operator $T$ is said to be \textit{affiliated} with $\mathcal{A}$ if for every bounded measurable function $h$, we have $h(T) \in \mathcal{A}$. The law (or distribution) $\mathcal{L}(T)$ of such an affiliated operator $T$ is the unique probability measure on $\mathbb{R}$ satisfying
\[
\varphi(h(T)) = \int_{\mathbb{R}} h(x) \, d\mathcal{L}(T)(x).
\]

For a collection of self-adjoint operators $T_1, \ldots, T_n$, their joint distribution is described by specifying
\[
\varphi(h_1(T_{i_1}), \ldots, h_k(T_{i_k})),
\]
for all $k \geq 1$, all index sequences $i_1, \ldots, i_k \in \{1, \ldots, n\}$, and all bounded measurable functions $h_1, \ldots, h_k : \mathbb{R} \to \mathbb{R}$.
\end{definition}

\begin{definition}
Let $(\mathcal{A}, \varphi)$ be a $W^*$-probability space, and suppose $a_1, a_2 \in \mathcal{A}$. Then $a_1$ and $a_2$ are said to be \textit{freely independent} if
\[
\varphi(p_1(a_{i_1}), \ldots, p_n(a_{i_n})) = 0,
\]
for every $n \geq 1$, every sequence $i_1, \ldots, i_n \in \{1,2\}$ with $i_j \neq i_{j+1}$ for all $j = 1, \ldots, n-1$, and all polynomials $p_1, \ldots, p_n$ in one variable satisfying
\[
\varphi(p_j(a_{i_j})) = 0, \quad \text{for all } j = 1, \ldots, n.
\]
\end{definition}

\begin{definition}
Let $a_1, \ldots, a_k$ and $b_1, \ldots, b_m$ be operators affiliated with $\mathcal{A}$. The families $(a_1, \ldots, a_k)$ and $(b_1, \ldots, b_m)$ are freely independent if and only if
\[
p(h_1(a_1), \ldots, h_k(a_k)) \quad \text{and} \quad q(g_1(b_1), \ldots, g_m(b_m))
\]
are freely independent for all bounded measurable functions $h_1, \ldots, h_k$ and $g_1, \ldots, g_m$, and for all polynomials $p$ and $q$ in $k$ and $m$ non-commutative variables, respectively.
\end{definition}

We are now ready to state our second main result. 
\begin{theorem}\label{theorem: Laplacian measure identification} Under the assumptions of Theorem \ref{theorem:Laplacian limiting measure}, the limiting measure $\nu_{\tau}$ can be identified as

$$
\nu_{\tau}=\mathcal{L}\left( T_W^{1/2} T_s T_W^{1/2}+\sqrt{\E[W]} T_W^{1/4} T_g T_W^{1/4}\right).
$$

Here, $T_g$ and $T_W$ are commuting self-adjoint operators affiliated with a $W^{\star}$-probability space $(\mathcal{A}, \varphi)$ such that, for bounded measurable functions $h_1, h_2$ from $\mathbb{R}$ to itself,
$$
\varphi\left(h_1\left(T_g\right) h_2\left(T_W\right)\right)=\left(\int_{-\infty}^{\infty} h_1(x) \phi(x) d x\right)\left(\int_1^\infty h_2(u)(\tau-1)u^{-\tau} du\right)
$$

with $\phi$ the standard normal density. The integral factorization is simply classical independence of the commuting selfadjoint operators $T_g$ and $T_W$. Furthermore, $T_s$ has a standard semicircle law and is freely independent of $\left(T_g, T_W\right)$.

In particular, when $W$ is degenerate, say $W\equiv 1$, then $\nu_{\tau}$ is given by the free additive convolution of semicircle and Gaussian law.

\end{theorem}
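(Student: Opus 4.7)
The plan is to identify $\nu_\tau$ by matching the moments of $\ESD(\bfD_N^\circ)$ computed via the moment method in Section~\ref{sec:moment method Laplacian} with the moments of the candidate operator $T := T_W^{1/2} T_s T_W^{1/2} + \E[W]\, T_W^{1/4} T_g T_W^{1/4}$ in the $W^*$-probability space $(\mathcal A, \varphi)$. Since Theorem~\ref{theorem:Laplacian limiting measure} already provides the existence of the deterministic limit $\nu_\tau$, its $k$-th moment is just the limit of the expected $k$-th empirical moment of $\bfD_N^\circ$. After the Gaussianisation, weight truncation, and diagonal decoupling performed in Section~\ref{section:Pre-moment method}, this limit is written in Section~\ref{sec:moment method Laplacian} as a combinatorial sum over non-crossing pair partitions on $\{1, \ldots, k\}$: each block is labelled either ``off-diagonal'' (arising from an $\A_N^\circ$-entry) or ``diagonal'' (arising from a $\mathbf{D}_N^\circ$-entry), and each block contributes an explicit product of Pareto moments $\E[W^\cdot]$; crucially the spatial parameter $\alpha$ drops out of the limit in the regime $\alpha < 1$.

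On the operator side, I would expand $\varphi(T^k)$ as a sum over sign sequences $(\varepsilon_1, \ldots, \varepsilon_k) \in \{s, g\}^k$ of mixed traces of the form $\varphi(T_W^{a_1} T_{\varepsilon_1} T_W^{a_2} T_{\varepsilon_2} \cdots T_W^{a_k} T_{\varepsilon_k} T_W^{a_{k+1}})$, with fractional exponents $a_i$ determined by the adjacent signs. Using the free independence of $T_s$ from the commutative pair $(T_g, T_W)$, Voiculescu's moment--cumulant formula reduces such a trace as follows: the $T_s$-positions must form non-crossing pair partitions and contribute the Catalan weights of the semicircle law, while on the remaining positions the commutativity of $T_g$ and $T_W$ lets one collect all surviving $T_W$-factors into a single power inside each ``cell'' of the partition and apply Wick's rule to the Gaussian $T_g$'s. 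The outcome is again a sum over non-crossing partitions of $\{1,\ldots,k\}$, each cell weighted by a Pareto moment $\E[W^\cdot]$ recording the total $T_W$-power accumulated there from the sandwich exponents.

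The crux of the identification, and the main obstacle, is to verify that the two decorated non-crossing-partition expansions coincide block by block; in particular that each diagonal block on the operator side carries exactly the factor $\E[W]$ present on the matrix side. This factor originates on the matrix side from the CLT-type computation $\mathrm{Var}(\bfd_i^\circ \mid W) \approx W_i \cdot \E[W]$, which comes from averaging the weights $W_k$ against the torus probability kernel $(c_N \|i-k\|^{\alpha})^{-1}$, and on the operator side from the explicit coefficient $\E[W]$ in front of $T_W^{1/4} T_g T_W^{1/4}$. Once both sides agree for every $k$, the spectral measure of $T$ is uniquely determined by its moments (using the bounded-weight truncation from Section~\ref{section:Pre-moment method}, under which $T$ becomes bounded and the moment problem determinate; the truncation is then removed via a tightness argument that exploits $\tau > 3$), yielding $\nu_\tau = \mathcal L(T)$. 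The degenerate case $W \equiv 1$ then serves as an immediate consistency check: $T_W$ collapses to the identity, so $T = T_s + T_g$ with $T_s$ free from $T_g$, and $\mathcal L(T)$ is by definition the free additive convolution of the semicircle and standard Gaussian laws, recovering the classical \cite{BDJ2006} result.
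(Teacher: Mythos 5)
Your plan is workable only up to the truncated level, and there it essentially re-derives by hand what the paper gets from its asymptotic-freeness lemma: the paper never matches decorated partition expansions directly, but instead uses the $\alpha$-independence of $\nu_{\tau,m}$ to set $\alpha=0$, replaces the decoupled diagonal by $Z_i\sqrt{W_i^m}\sqrt{\E[W_1^m]}$ via a law-of-large-numbers step (Lemma \ref{lemma:LLN argument} — this is where the $\E[W]$ factor enters, as you correctly anticipate), writes the matrix as $\mathrm{W}_m^{1/2}(N^{-1/2}\mathrm{G})\mathrm{W}_m^{1/2}+\sqrt{\E[W_1^m]}\,\mathrm{W}_m^{1/4}\mathrm{Z}\mathrm{W}_m^{1/4}$, and invokes Lemma \ref{lemma:free prod} (asymptotic freeness of a Wigner matrix from independent uniformly bounded diagonal matrices) to conclude $\nu_{\tau,m}=\mathcal{L}\bigl(T_{W_m}^{1/2}T_sT_{W_m}^{1/2}+\sqrt{\E[W_m]}\,T_{W_m}^{1/4}T_gT_{W_m}^{1/4}\bigr)$. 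Your combinatorial matching could substitute for that lemma, but two of your supporting claims are wrong as stated: first, weak convergence in probability does not give you that the $k$-th moment of $\nu_\tau$ is the limit of the expected empirical moments of $\bfD_N^\circ$ — indeed, since $W$ is Pareto with only moments of order $<\tau-1$ finite, the untruncated limit law $\nu_\tau$ does not have moments of all orders, so no moment computation can identify it directly; second, even after truncating the weights the candidate operator is \emph{not} bounded, because $T_g$ (and the Gaussian diagonal it models) is unbounded; determinacy of $\nu_{\tau,m}$ must instead be argued via Carleman's condition from the factorial-type moment growth, as the paper does in Step 3 of Proposition \ref{main:prop}.

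The more serious gap is the removal of the truncation. A ``tightness argument exploiting $\tau>3$'' shows at most that subsequential limits of $\nu_{\tau,m}$ exist (and Lemma \ref{lemma:slutsky} already gives $\nu_{\tau,m}\to\nu_\tau$ on the matrix side); what it does not give is the identification of that limit with $\mathcal{L}\bigl(T_W^{1/2}T_sT_W^{1/2}+\E[W]\,T_W^{1/4}T_gT_W^{1/4}\bigr)$. For that you need a continuity statement for the law of a fixed self-adjoint polynomial evaluated at \emph{unbounded} affiliated operators, as the input law $\mathcal{L}(T_{W^m})$ converges to $\mathcal{L}(T_W)$ — and moments are unavailable precisely because the limit is heavy-tailed. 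This is exactly what the paper imports from \cite{bercovici1993free}: Theorem 3.9 there produces a common $W^*$-probability space with a projection $p$ satisfying $pT_{W^m}p=pT_Wp$ and $\varphi(p)\ge 1-\eps$, and Proposition 4.1 together with Corollary 4.5 turns the Kolmogorov-distance bound $|F_{\mu_{W^m}}-F_{\mu_W}|\le\eps$ into $d_\infty(\nu_m,\nu)\le\eps$ for the laws of the polynomial $Q(x,y,z)=x^{1/2}yx^{1/2}+c\,x^{1/4}zx^{1/4}$. Without an argument of this type (or an equivalent resolvent/rank-inequality argument), your proof establishes only $\nu_{\tau,m}$ for each fixed $m$ and leaves the statement of Theorem \ref{theorem: Laplacian measure identification} unproved.
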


\subsection{Discussion and simulations.} 
\begin{itemize}
\item[(a)] We now briefly describe the main steps of the proof. 
\begin{itemize}
\item[1.] {\bf Gaussianisation.} In the first step, we show that replacing the Bernoulli entries with Gaussian entries having the same mean and variance results in empirical spectral distributions that are close.

\item[2.] {\bf Simplification of the variance profile.} In this step, we show that the variance profile can be simplified to $W_iW_j/\|i-j\|^{\alpha}$, effectively removing the truncation at $1$.

\item[3.] {\bf Truncation.} Here, we show that in the Gaussian matrix, the weights $W_i$ can be replaced by the truncated weights $W_i^m = W_i\one_{W_i \le m}$.

\item[4.] {\bf Decoupling the diagonal.} In this step, we show that the Laplacian can be viewed as the sum of two independent random matrices (conditionally on the weights). Thus, we replace the diagonal matrix $D_N$ with an independent copy $Y_N$, which has the same variance profile.

\item[5.] {\bf Moment method.} With truncated weights and decoupled matrices, we apply the moment method to show convergence of the empirical spectral distribution and identify the limiting moments. A key observation is that the limiting measure and the method are independent of $\alpha$, so the results remain valid even when $\alpha = 0$.

\item[6.] {\bf Identification of the limiting measure.} Finally, we first identify the limiting measure in the case of truncated weights. These are typically associated with bounded operators (except in the Gaussian case). We then use techniques from \cite{bercovici1993free} to remove the truncation and identify the limiting measure in the general case.
\end{itemize}

\item[(b)] We now present some simulations that illustrate how the proof outline aligns with a specific value of $\alpha$. In Figure \ref{fig:spectrum_cenLap}, we plot the eigenvalue distribution of the centred Laplacian matrix, with the parameter range $N=6000$, $\alpha=0.5$ and $\tau=4.1$. A crucial step in the proof of Theorem \ref{theorem:Laplacian limiting measure} require us to replace the Bernoulli entries with Gaussian entries with the same variance profile. Also in the Gaussian case, we can simplify the variance to the following form: 
\[
\frac{W_iW_j}{\|i-j\|^{\alpha}}
\]
for any $(i,j)^{\text{th}}$ entry. We compare the two spectra in Figure \ref{fig:spectrum_compare}. We also consider the Gaussianised Laplacian matrix with a decoupled diagonal, and in Section \ref{sec:final proof Laplacian}, we apply an idea used in \cite{CHMS2025}, where we take $\alpha=0$. We also compare the spectrum of this matrix to the original centred Laplacian in Figure \ref{fig:spectrum_compare}. We see that the spectra are quite similar.  

\begin{figure}[ht!]
 \centering
\includegraphics[scale=.5]{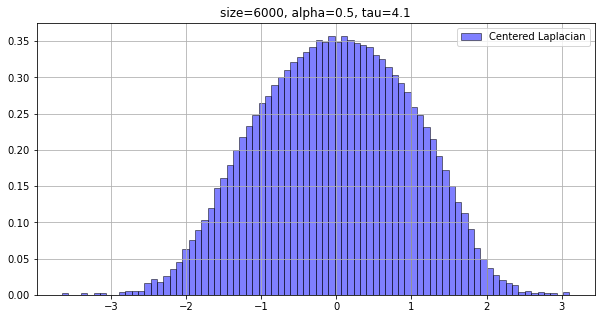}    
    \caption{Spectrum of the centred Laplacian matrix .}
\label{fig:spectrum_cenLap}
\end{figure}
\begin{figure}[ht!]
 \centering
\hfill
\subfloat{\includegraphics[width=0.5\linewidth]{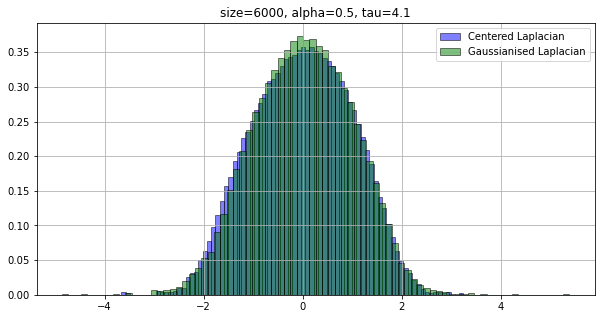}}
    \hfill    \subfloat{\includegraphics[width=0.5\linewidth]{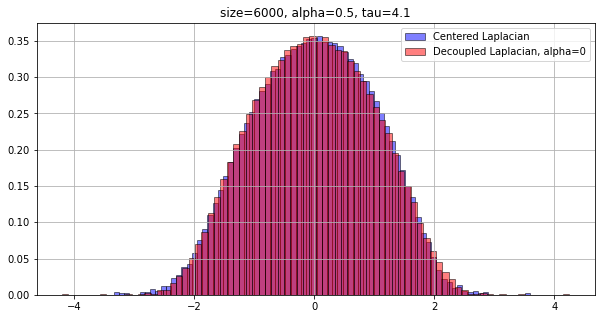}}
    \hfill
    \caption{Comparing the spectrum of the centred Laplacian with the Gaussianised and the decoupled case. }
    \label{fig:spectrum_compare}
\end{figure}

\item[(c)] We remark that our results can be extended in two directions. Although we state and prove them for the case $d = 1$ and $\alpha < 1$, they naturally generalize to any dimension $d \ge 1$ and $\alpha < d$. In that case, the scaling constant requires an adjustment, with $c_N \sim c_0(d)\, N^{d - \alpha}$. For ease of presentation, we restrict ourselves to $d = 1$ in this work.

Another possible extension of our first result involves modifying the connection probabilities between vertices $i$ and $j$ to
\[
 \frac{\kappa_{\sigma}(W_i, W_j)}{\|i - j\|^\alpha} \wedge 1,
\]
where $\kappa_{\sigma}(x, y) = (x \vee y)(x \wedge y)^{\sigma}$. In this setting, we additionally assume $0<\sigma < (\tau - 1)$. Such extensions have been studied in the context of adjacency matrices in~\cite{CHMS2025}. We strongly believe that in this case, the limiting spectral distribution will exist but it would be challenging to identify the limiting measure.

\end{itemize}

\subsection{Notation} 
%We will use the symbol $a(N)\sim b(N)$ to denote that $\lim_{N\to\infty}a(N)/b(N)=1.$ %We will write $\mathbf P$ for the law of the Pareto weights, $P^W$ for the law of the random graph given the weights $W=(W_i)_{i\in \Ver_N}$ and $\prob=\mathbf P\otimes P^W$ will be the joint law of the weights and the edge variables.
We will use the Landau notation $o_N,\,\bigO_N$ indicating in the subscript the variable under which we take the asymptotic (typically this variable will grow to infinity unless otherwise specified). Universal positive constants are denoted as $c,\,c_1,\,\ldots,$ and their value may change with each occurrence. For an $N\times N$ matrix $\A=(a_{ij})_{i,\,j=1}^N$ we use $\Tr(\A)\coloneqq\sum_{i=1}^N a_{ii} $ 
for the trace and $\mathrm{tr}(\A)\coloneqq N^{-1}\Tr(\A)$ for the normalised trace. When $n\in\N$ we write $[n]\coloneqq\{1,\,2,\,\ldots,\,n\}.$ We denote the cardinality of a set $A$ as $\#A$, and, with a slight abuse of notation, $\#\sigma$ also denotes the number of cycles in a permutation $\sigma$.

\section{Gaussianisation: Setup for the proof of Theorem \ref{theorem:Laplacian limiting measure}}\label{section:Pre-moment method}

To prove Theorem \ref{theorem:Laplacian limiting measure}, we construct a Laplacian matrix with truncated weights and a simplified variance profile, with the diagonal decoupled from the adjacency matrix. We follow the ideas of \cite{CHMS2025}, albeit with a slightly modified approach, as follows:
\begin{enumerate}
    \item We begin by Gaussianising the matrix $\bfD_N^o$ to obtain a matrix $\bar{\bfD}_N$, using the ideas of \cite{chatterjee2005simple}. Since we have $\tau > 3$, the proof proceeds without the need to truncate the weight sequence $\{W_i\}_{i\in\Ver_N}$. 
    \item We then tweak the entries of $\bar{\bfD}_N$ further through a series of lemmas to obtain the Laplacian matrix $\widehat{\bfD}_{N,g}$, whose corresponding adjacency has mean-zero Gaussian entries and a simplified variance profile. 
    \item Next, we truncate the weights $\{W_i\}_{i\in\Ver_N}$ at $m \geq 1$, and construct the corresponding matrix $\bfD_{N,g,m}$. We show that, in $\prob$-probability, the L\'{e}vy distance vanishes in the iterated limit $m \to \infty$ and $N \to \infty$.
    \item We conclude by decoupling the diagonal of the matrix $\bfD_{N,g,m}$ from the off-diagonal terms. This follows from classical results used in studying the spectrum of Laplacian matrices. 
\end{enumerate}

\subsection{Gaussianisation}\label{subsec:Gaussianisation}

Suppose $(G_{i,j})_{i>j}$ is a sequence of i.~i.~d.\ $\gauss(0,1)$ random variables and independent of the sequence $(W_i)_{i\in \Ver_N}$. Define
$$\bar{\bA}_N(i,j)=\begin{cases} \frac{\sqrt{p_{ij}(1-p_{ij})} }{\sqrt{c_N}}G_{i\wedge j, i\vee j} +\frac{\mu_{ij}}{\sqrt{c_N}} &  i\neq j\\
0 & i=j,
\end{cases} 
$$
where $\mu_{ij} = p_{ij} - \E[p_{ij}]$. Let $\bar{\bfD}_N$ be the corresponding Laplacian of the matrix $\bar{\bA}_N$. Let $h$ be a $3$ times differentiable function on $\mathbb R$ such that $\max_{0\le k\le 3} \sup_{x\in \mathbb R} |h^{(k)}(x)|<\infty$, where $h^{(k)}$ is the $k$-th derivative of $h$. Define the resolvent of the $N\times N$ matrix $\M_N$ as
$$
R_{M_N}(z)=\left(\M_N-z \Id_N\right)^{-1},\qquad z \in \mathbb{C}^+,
$$
where $\Id_N$ is the $N\times N$ identity matrix and $\C^+$ is the upper-half complex plane. Further, define
$H_z(\M_N)= \St_{M_N}(z)= \tr(R_{M_N}(z))$ for $z\in \mathbb C^{+}$.  

\begin{lemma}[{\bf Gaussianisation of $\bfD_N$}]\label{lemma:Laplacian Gaussianisation}
Consider $\bar{\bfD}_N$ and $\bfD_N^o$ defined as above. Then for any $h$ as above,
$$\lim_{N\to\infty}\left| \E[ h(\Re H_z(\bar{\bfD}_N))]- \E[ h(\Re H_z(\bfD_{N}^o))]\right|=0\, ,$$
and 

$$\lim_{N\to\infty}\left| \E[ h(\Im H_z(\bar{\bfD}_N))]- \E[ h(\Im H_z(\bfD_{N}^o))]\right|=0\, .$$    
\end{lemma}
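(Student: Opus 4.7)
The plan is to apply the Lindeberg-type swapping scheme of \cite{chatterjee2005simple}, conditionally on the weight sequence $(W_i)_{i\in \Ver_N}$. Conditional on $W$, the edge indicators $\xi_{ij}$ are independent Bernoulli$(p_{ij})$, and one would write both matrices in the common form
\[
M_0+\sum_{i<j}Z_{ij}F_{ij},
\]
where $M_0$ absorbs the shared mean part $\mu_{ij}/\sqrt{c_N}$, the $Z_{ij}$ are either $X_{ij}=\xi_{ij}-p_{ij}$ (producing $\bfD_N^\circ$) or $Y_{ij}=\sqrt{p_{ij}(1-p_{ij})}\,G_{ij}$ (producing $\bar\bfD_N$), and
\[
F_{ij}=c_N^{-1/2}\bigl(e_ie_j^\top+e_je_i^\top-e_ie_i^\top-e_je_j^\top\bigr)=-c_N^{-1/2}(e_i-e_j)(e_i-e_j)^\top.
\]
Conditionally on $W$, $\{X_{ij}\}$ and $\{Y_{ij}\}$ are independent across pairs with matched first and second moments, so I expect a pair-by-pair swap to cost only a third-order Taylor remainder.

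The crucial structural point I would exploit is that each $F_{ij}$ has rank one and operator norm $2/\sqrt{c_N}$, a consequence of the Laplacian identity $F_{ij}\propto (e_i-e_j)(e_i-e_j)^\top$. Fix $(i,j)$, view $H_z$ as a function of the scalar $t$ via $M(t)=M_0+tF_{ij}+(\text{other terms fixed})$ and set $R(t)=(M(t)-z\Id_N)^{-1}$. The resolvent identity $\partial_t^n R=(-1)^n n!(RF_{ij})^n R$ together with the rank-one bound $|\Tr(R(F_{ij}R)^n)|\leq \mathrm{rank}(F_{ij})\|R\|_{\mathrm{op}}^{n+1}\|F_{ij}\|_{\mathrm{op}}^n$ yields
\[
\Big|\frac{\partial^n H_z}{\partial t^n}\Big|\leq \frac{n!}{N}\|R\|_{\mathrm{op}}^{n+1}\|F_{ij}\|_{\mathrm{op}}^n\leq \frac{C_n}{N\,|\Im z|^{n+1}c_N^{n/2}}.
\]
A routine chain-rule computation, using $\max_{0\leq k\leq 3}\|h^{(k)}\|_\infty<\infty$, then produces $\sup_t|\partial_t^3 h(\Re H_z)|\leq C_h/(Nc_N^{3/2}|\Im z|^4)$.

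Plugging this into Chatterjee's estimate and using $|X_{ij}|\leq 1$ together with the Gaussian third moment to get $E^W[|X_{ij}|^3]\vee E^W[|Y_{ij}|^3]\leq C p_{ij}$, I obtain the conditional bound
\[
\bigl|\E[h(\Re H_z(\bar\bfD_N))\mid W]-\E[h(\Re H_z(\bfD_N^\circ))\mid W]\bigr|\leq \frac{C_h}{Nc_N^{3/2}|\Im z|^4}\sum_{i<j}p_{ij}.
\]
Taking expectation over $W$ and using $\mathbf{E}[p_{ij}]\leq (\mathbf{E}[W])^2\|i-j\|^{-\alpha}$ together with $\sum_{i\neq j}\|i-j\|^{-\alpha}=Nc_N$ then gives
\[
\bigl|\E[h(\Re H_z(\bar\bfD_N))]-\E[h(\Re H_z(\bfD_N^\circ))]\bigr|\leq \frac{C_h(\mathbf{E}[W])^2}{|\Im z|^4 c_N^{1/2}}=O\bigl(N^{-(1-\alpha)/2}\bigr)\longrightarrow 0,
\]
since $\tau>3$ ensures $\mathbf{E}[W]<\infty$ and $c_N\sim c_0 N^{1-\alpha}$. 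The imaginary-part statement is handled identically. The delicate step is precisely the rank-one observation $F_{ij}\propto (e_i-e_j)(e_i-e_j)^\top$; without it, the naive estimate $|\Tr(R(F_{ij}R)^n)|\leq N\|R\|^{n+1}\|F_{ij}\|^n$ would inflate the bound by a factor $N$ and the swap would fail to close in the dense regime $\alpha<1$.
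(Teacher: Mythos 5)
Your proposal is correct and follows essentially the same route as the paper: conditionally on the weights, both arguments run the Chatterjee--Lindeberg swap between the centred Bernoulli and Gaussian entries (matched first and second conditional moments), with the key input being the third-derivative bound of order $(Nc_N^{3/2})^{-1}$ for $h(\Re H_z)$ as a function of a single entry, which you derive explicitly from the rank-one structure $F_{ij}\propto (e_i-e_j)(e_i-e_j)^{\top}$ and the paper imports as $\lambda_3(H)\le C_3(\Im z)N^{-1}c_N^{-3/2}$. The only difference is bookkeeping: you use the untruncated third-moment form of the Lindeberg estimate (bounding both third moments by $Cp_{ij}$ and needing only $\mathbf{E}[W]<\infty$), whereas the paper applies Chatterjee's truncated version with a level $K_N$ and handles an extra tail term via $\mathbf{E}[W^2]$; both close under the stated hypotheses.
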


The proof is very similar to the one presented in \cite{Chatterjee:Hazra} and is modified along the lines of \cite{CHMS2025} and hence it is moved to Appendix \ref{appendix:prel_lemmas}. It uses the classical result of \cite{chatterjee2005simple}, and we only give a brief sketch by showing the estimates of the error probabilities in this setting. In \cite{CHMS2025}, the Gaussianisation was done with truncated weights, but here we will not need that. 

\subsection{\bf Simplification of the variance profile.}
We now proceed with a series of lemmas to simplify the variance profile of our Gaussianised matrix. First, we construct a new matrix $\bfD_{N,g}$ as the Laplacian corresponding to the matrix $\A_{N,g}$, defined as follows:

Suppose $(G_{i,j})_{i>j}$ is a sequence of i.~i.~d.\ $\gauss(0,1)$ random variables as before, and independent of the sequence $(W_i)_{i\in \Ver_N}$. Define
$$\A_{N,g}=\begin{cases} \frac{\sqrt{p_{ij}(1-p_{ij})} }{\sqrt{c_N}}G_{i\wedge j, i\vee j} &  i\neq j\\
0 & i=j.
\end{cases}
$$
We now have the following result. 
\begin{lemma}\label{lemma:mean zero gaussianisation}
Let $\bar{\bfD}_N$ and $\bfD_{N,g}$ be as defined above. Then, 
$$
\lim_{N\to\infty}\prob(d_L(\ESD(\bar{\bfD}_N),\ESD(\bfD_{N,g}))>\vep)=0.
$$
\end{lemma}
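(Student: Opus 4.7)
The plan is to work in the coupling where $\bar{\bA}_N$ and $\A_{N,g}$ are built from the same family of Gaussian variables $G_{ij}$; under this coupling
\[
\bar{\bA}_N - \A_{N,g} = M_\mu, \qquad M_\mu(i,j) = \frac{\mu_{ij}}{\sqrt{c_N}}\one_{i\ne j},\qquad \mu_{ij}=p_{ij}-\E[p_{ij}],
\]
and, passing to the Laplacians, $\bar{\bfD}_N - \bfD_{N,g} = L(M_\mu) := M_\mu - D(M_\mu)$, where $D(M_\mu)_{ii} = \sum_{j\ne i}M_\mu(i,j)$. Applying Bai's inequality (the standard consequence of Hoffman-Wielandt), I would then bound
\[
d_L\!\left(\ESD(\bar{\bfD}_N),\,\ESD(\bfD_{N,g})\right)^3 \;\le\; \tfrac{1}{N}\|L(M_\mu)\|_{\mathrm{HS}}^2 \;=\; \tfrac{1}{N}\Bigl(\|M_\mu\|_{\mathrm{HS}}^2 + \|D(M_\mu)\|_{\mathrm{HS}}^2\Bigr),
\]
using the orthogonality in the Hilbert-Schmidt inner product between the zero-diagonal matrix $M_\mu$ and the diagonal matrix $D(M_\mu)$. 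It then suffices to show that each of the two terms on the right-hand side tends to zero in $\prob$-probability.

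The off-diagonal contribution is the easy one: since $\tau>3$ ensures $\mathbf{E}[W^2]<\infty$, splitting on the truncation event $\{W_iW_j \le \|i-j\|^\alpha\}$ and applying Markov's inequality to the complement gives the uniform estimate $\mathbf{E}[\mu_{ij}^2] \le C\,\|i-j\|^{-2\alpha}$. Combined with the standard torus sum $\sum_{i\ne j}\|i-j\|^{-2\alpha}= \O{N^{2-2\alpha}\vee N}$ and the scaling $Nc_N \sim c_0 N^{2-\alpha}$, this yields $\mathbf{E}[\|M_\mu\|_{\mathrm{HS}}^2]/N=o(1)$ uniformly over $\alpha\in(0,1)$, and Markov closes this step.

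The diagonal contribution $\|D(M_\mu)\|_{\mathrm{HS}}^2 = c_N^{-1}\sum_i\bigl(\sum_{j\ne i}\mu_{ij}\bigr)^2$ is where the main obstacle lies, as the summands inside each row share the common weight $W_i$ and are therefore not independent under $\prob$. The plan is to handle this via a Hoeffding-type decomposition $\mu_{ij} = \psi_{ij} + b_i^{(ij)} + b_j^{(ji)}$, with $b_i^{(ij)}:=\mathbf{E}[\mu_{ij}\mid W_i]$ and $\psi_{ij}$ doubly centred: conditional on $W_i$ the $\psi_{ij}$ pieces are independent and mean zero, so their row-sum variance is again controlled by $\sum_j\|i-j\|^{-2\alpha}$ and handled as in the off-diagonal case. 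The delicate part is the pair of linear contributions $\sum_j b_i^{(ij)}$ and $\sum_j b_j^{(ji)}$, which I would control by exploiting the torus translation invariance of the kernel $\|i-j\|^{-\alpha}$ (so that the coefficients of $W_j$ in the latter sum depend only through shifts of a common profile) and combining this with concentration for functionals of the i.i.d.\ Pareto weights $(W_i)$ in order to extract the required $o(1)$ probability bound; this is the step I expect to require the most care.
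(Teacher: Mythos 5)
Your setup (same-Gaussian coupling, passing the difference to the Laplacian of $M_\mu$, Hoffman--Wielandt, and the off-diagonal estimate via $\mathbf{E}[\mu_{ij}^2]\le C\|i-j\|^{-2\alpha}$) reproduces the paper's route, and your off-diagonal step is fine. The genuine gap is exactly the term you defer, and your own Hoeffding decomposition shows why it cannot be closed as planned. Indeed $b_i^{(ij)}=\mathbf{E}[\mu_{ij}\mid W_i]=\mathbf{E}[p_{ij}\mid W_i]-\E[p_{ij}]$, and since $\tau>3$ the truncation at $1$ is negligible for all but lower-order many $j$, so $b_i^{(ij)}=(W_i-\mathbf{E}[W])\mathbf{E}[W]\,\|i-j\|^{-\alpha}(1+o(1))$. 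Hence $\sum_{j\neq i}b_i^{(ij)}=(W_i-\mathbf{E}[W])\mathbf{E}[W]\,c_N(1+o(1))$: conditionally on $W_i$ this is essentially deterministic and of order $c_N$, so neither torus translation invariance nor concentration over the i.i.d.\ weights $(W_j)_{j\neq i}$ can reduce it (those devices do tame the other linear term, $\sum_j b_j^{(ji)}$, whose variance is $O(N^{(1-2\alpha)\vee 0}\vee\log N)=o(c_N)$, as you anticipate). Consequently $\tfrac1N\|D(M_\mu)\|_{\mathrm{HS}}^2=\tfrac{1}{Nc_N}\sum_i\bigl(\sum_{j\neq i}\mu_{ij}\bigr)^2\asymp c_N\,\mathbf{E}[W]^2\Var(W)\to\infty$ for non-degenerate $W$, so the Hoffman--Wielandt bound you set up does not tend to zero; the "step requiring the most care" is not delicate but fatal to this strategy.

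For comparison, the paper's own proof takes the same Hilbert--Schmidt route and disposes of this diagonal contribution in one line, bounding $\bigl|\sum_{k\neq i}(\A_{N,g}(i,k)-\bar{\A}_N(i,k))\bigr|$ by $c_N^{-1/2}\sum_{k\neq i}p_{ik}$ and declaring the resulting contribution ``of order $\mathrm{o}_N(c_N)$''; note that $\sum_k p_{ik}$ is itself of order $c_N$ (so the displayed identity with $c_N^{-1/2}$ cannot be right), and $\mathrm{o}_N(c_N)$ is in any case not the $\mathrm{o}_N(1)$ that the Markov-inequality conclusion requires. So your decomposition actually pinpoints the step at which the Hilbert--Schmidt comparison does not deliver the needed smallness. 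To complete a proof of a statement of this type one needs a different input: either centre conditionally on the weights (so that $\mathbf{E}[\mu_{ij}\mid W_i]=0$ and your $\psi$-part argument already suffices), or give an argument beyond Hoffman--Wielandt showing that adding the diagonal matrix with entries $c_N^{-1/2}\sum_{j}\mu_{ij}\sim\sqrt{c_N}\,\mathbf{E}[W](W_i-\mathbf{E}[W])$ does not displace the bulk --- a claim that your computation (and mine) gives strong reason to doubt rather than to expect.
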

% \begin{proof}
% The proof follows using Proposition \ref{prop:hoffman-wielandt}. Taking expectation on the $d_L$ distance, we have 
% \begin{align*}
%     \E\left[d_L^3 (\ESD(\bfD_{N,g},\ESD(\bar{\bfD}_{N})\right] &\le \frac{1}{N}\E\Tr\left( (\bfD_{N,g}- \bar{\bfD}_{N})^2\right)\\ 
% &=\frac{1}{N}\E\left[\sum_{1\le i,j\le N}\left(\bfD_{N,g}(i,j)-\bar{\bfD}_{N}(i,j)\right)^2\right]\\
% &=\frac{1}{N}\sum_{1\le i\neq j\le N}\E\left[ (\A_{N,g}(i,j)-\bar{\A}_{N}(i,j))^2\right]\\
% &\qquad + \frac{1}{N}\sum_{i=1}^N \E\left[\left(\sum_{k\neq i} \A_{N,g}(i,k)-\bar{\A}_{N}(i,k)\right)^2\right].
%     \end{align*}
% We deal with the last two terms separately. The first term is bounded above by
%     $$\frac{1}{Nc_N}\sum_{i\neq j}\E[\mu_{ij}^2] \le \frac{1}{Nc_N} \sum_{i\neq j} \frac{\E[W_1^2]^2}{\|i-j\|^{2\alpha}}\approx \frac{N^{2-2\alpha}}{N^{2-\alpha}}= N^{-\alpha}\to 0.$$
%     Next, we have that $$
%     \sum_{k\neq i}\bA_{N,g}(i,k) - \bar{\bA}_N(i,k) \le \frac{1}{\sqrt{c_N}}\sum_{k\neq i} p_{ik} = c_N^{1/2}. 
%     $$
%     This makes the second term of the order $\mathrm{o}_N(c_N)$. We conclude the proof using Markov's inequality.
% \end{proof}

\begin{proof}[Proof of Lemma \ref{lemma:mean zero gaussianisation}]
By the Hoffman--Wielandt inequality (Proposition~ \ref{prop:hoffman-wielandt}),
\begin{align*}
\E\Big[d_L^3\big(\ESD(\bfD_{N,g}),\ESD(\bar\bfD_N)\big)\Big]
&\le \frac1N\,\E\Tr\Big((\bfD_{N,g}-\bar\bfD_N)^2\Big)\\
&=\frac1N\sum_{i\ne j}\E\left[(\A_{N,g}(i,j)-\bar \A_N(i,j))^2\right]\\
&+\frac1N\sum_{i=1}^N \E\left[\Big(\sum_{k\ne i}(\A_{N,g}(i,k)-\bar \A_N(i,k))\Big)^2\right].
\end{align*}
Recall that for $i\ne j$,
\[
\bar \A_N(i,j)=\frac{\sqrt{p_{ij}(1-p_{ij})}}{\sqrt{c_N}}\;G_{i\wedge j,i\vee j}+\frac{\mu_{ij}}{\sqrt{c_N}},
\qquad
\A_{N,g}(i,j)=\frac{\sqrt{p_{ij}(1-p_{ij})}}{\sqrt{c_N}}\;G_{i\wedge j,i\vee j},
\]
with $\mu_{ij}:=p_{ij}-\E[p_{ij}]$ and $c_N$ as in \eqref{chap5:eq:def_c}. Hence
\[
\A_{N,g}(i,j)-\bar \A_N(i,j)=-\frac{\mu_{ij}}{\sqrt{c_N}}.
\]

\smallskip
\noindent\textbf{Off–diagonal term.}
Using $\E[\mu_{ij}^2]\le \E[p_{ij}^2]$ and the bound $\E[p_{ij}^2]\leq C \|i-j\|^{-2\alpha}$ (from $\E[W^2]<\infty$), we get
\begin{align*}
\frac1N\sum_{i\ne j}\E\left[(\A_{N,g}(i,j)-\bar \A_N(i,j))^2\right]
=\frac{1}{Nc_N}\sum_{i\ne j}\E[\mu_{ij}^2]
\leq C \frac{1}{Nc_N}\sum_{i\ne j}\frac{1}{\|i-j\|^{2\alpha}}.
\end{align*}
By Lemma~\ref{lem:sumtoint} with $\beta=2\alpha$,
\[
\frac{1}{N}\sum_{i\ne j}\frac{1}{\|i-j\|^{2\alpha}}
\asymp
\begin{cases}
N^{1-2\alpha}, & 0<\alpha<\tfrac12,\\[2pt]
\log N, & \alpha=\tfrac12,\\[2pt]
1, & \tfrac12<\alpha<1,
\end{cases}
\]
while $c_N\asymp N^{1-\alpha}$ for $0<\alpha<1$. Therefore
\begin{equation}\label{eq:boundsalpha}
\frac1N\sum_{i\ne j}\E\left[(A_{N,g}(i,j)-\bar A_N(i,j))^2\right]
\leq C
\begin{cases}
N^{-\alpha}, & 0<\alpha<\tfrac12,\\[2pt]
(\log N)\,N^{-1/2}, & \alpha=\tfrac12,\\[2pt]
N^{-(1-\alpha)}, & \tfrac12<\alpha<1,
\end{cases}
\end{equation}
and clearly, all the cases go to zero.

\noindent\textbf{Diagonal (row–sum) term.}
Set $S_i:=\sum_{k\ne i}\mu_{ik}$, so that
\[
\sum_{k\ne i}(A_{N,g}(i,k)-\bar A_N(i,k))
=-\frac{1}{\sqrt{c_N}}\,S_i.
\]
Thus
\[
\frac1N\sum_{i=1}^N \E\!\left[\Big(\sum_{k\ne i}(A_{N,g}(i,k)-\bar A_N(i,k))\Big)^2\right]
= \frac{1}{Nc_N}\sum_{i=1}^N \E\big[S_i^2\big].
\]
We bound $\E[S_i^2]=\Var\!\big(\sum_{k\ne i}p_{ik}\big)$ by the Efron–Stein inequality (\cite[Theorem 3.1]{Boucheron:Lugosi:Massart}) applied to the independent family $\{W_k:k\ne i\}$ (with $W_i$ kept fixed as a parameter):
\[
\Var\!\Big(\sum_{k\ne i}p_{ik}\Big)\ \le\ \sum_{k\ne i}\E\Big[\big(p_{ik}(W_i,W_k)-p_{ik}(W_i,W_k')\big)^2\Big]
\ \le\ 2\sum_{k\ne i}\E\big[p_{ik}^2\big],
\]
where $W_k'$ is an independent copy of $W_k$. Using again $\E[p_{ik}^2]\leq C \|i-k\|^{-2\alpha}$, we obtain
\[
\frac{1}{Nc_N}\sum_{i=1}^N \E[S_i^2]
\ \leq \ \frac{C}{Nc_N}\sum_{i\ne k}\frac{1}{\|i-k\|^{2\alpha}},
\]
which is the same quantity already handled in the off–diagonal term. Hence it also goes to $0$ at the rates displayed above.

\smallskip
Combining the two bounds gives
\[
\E\Big[d_L^3\big(\ESD(\Delta_{N,g}),\ESD(\bar\Delta_N)\big)\Big]\longrightarrow 0,
\]
and the claim follows by Markov's inequality.
\end{proof}

Define for $i\neq j$
\[
\tilde{\A}_{N,g}(i,j)= \frac{\sqrt{p_{ij}}}{\sqrt{c_N}}G_{i\wedge j, i\vee j}
\]
and put zero on the diagonal. Here $(G_{i,j})_{i\ge j}$ are the i.~i.~d. $\gauss(0,1)$ random variables used in the previous result. Let $\tilde{\bfD}_{N,g}$ be analogously defined. 
The next lemma shows that $\bfD_{N,g}$ and $\tilde\bfD_{N,g}$ have asymptotically the same spectrum.

\begin{lemma}\label{lemma:gaussianisation without 1-}
$$\lim_{N\to\infty}\prob\left( d_L(\ESD(\tilde{\bfD}_{N,g}), \ESD(\bfD_{N,g}))>\varepsilon\right) =0.$$
\end{lemma}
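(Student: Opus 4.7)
The plan is to follow exactly the template of the previous two lemmas: apply the Hoffman--Wielandt-type bound from Proposition \ref{prop:hoffman-wielandt},
\[
\E\bigl[d_L^3(\ESD(\tilde{\bfD}_{N,g}), \ESD(\bfD_{N,g}))\bigr] \le \frac{1}{N}\E\Tr\bigl((\tilde{\bfD}_{N,g} - \bfD_{N,g})^2\bigr),
\]
estimate the right-hand side entry by entry using the natural coupling in which $\bfD_{N,g}$ and $\tilde{\bfD}_{N,g}$ are built from the same family of standard Gaussians $(G_{ij})_{i<j}$, and conclude with Markov's inequality. Under that coupling the off-diagonal difference is
\[
\tilde{\A}_{N,g}(i,j) - \A_{N,g}(i,j) = \frac{\sqrt{p_{ij}}\bigl(1 - \sqrt{1-p_{ij}}\bigr)}{\sqrt{c_N}}\, G_{ij},
\]
and the diagonal difference is minus the corresponding $i$-th row sum of these Gaussians.

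The crucial analytic input is the elementary estimate $(1 - \sqrt{1-p})^2 \le p^2$ valid for $p \in [0,1]$, which gives the entrywise variance bound
\[
E^W\bigl[(\tilde{\A}_{N,g}(i,j) - \A_{N,g}(i,j))^2\bigr] \le \frac{p_{ij}^3}{c_N}.
\]
Since $p_{ij} \le 1$ and simultaneously $p_{ij} \le W_iW_j/\|i-j\|^\alpha$, we have $p_{ij}^3 \le W_i^2 W_j^2/\|i-j\|^{2\alpha}$; the moment $\mathbf{E}[W^2]$ is finite thanks to $\tau>3$. Summing the off-diagonal contribution then yields
\[
\frac{1}{N}\sum_{i\ne j}\E\bigl[(\tilde{\A}_{N,g}(i,j) - \A_{N,g}(i,j))^2\bigr] \le \frac{\mathbf{E}[W_1^2]^2}{Nc_N}\sum_{i\ne j}\frac{1}{\|i-j\|^{2\alpha}} = \lito_N(1),
\]
using $c_N \sim c_0 N^{1-\alpha}$ and the fact that $\sum_{i\ne j}\|i-j\|^{-2\alpha}$ grows as $N^{2-2\alpha}$, $N\log N$, or $N$ according to whether $2\alpha$ is smaller, equal, or larger than $1$; in each case the ratio vanishes since $\alpha<1$.

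The diagonal contribution is handled identically: conditional on the weights, the $i$-th diagonal entry of $\tilde{\bfD}_{N,g}-\bfD_{N,g}$ is a sum of independent centred Gaussians, so its variance equals the sum of the entrywise variances, reproducing the same bound after summing over $i$. I do not expect any genuine obstacle, because the structure of the argument is already spelled out in Lemma \ref{lemma:mean zero gaussianisation}; the only point that must be handled slightly more carefully than before is that the difference in entries is itself random (rather than deterministic as in Lemma \ref{lemma:mean zero gaussianisation}), so one cannot use a pointwise bound for the diagonal. This forces one to rely on the squared bound $(1-\sqrt{1-p})^2 \le p^2$ — the extra factor $p^2$ beyond the naive $p^1$ is precisely what absorbs the $N^{1-\alpha}$ from $c_N$ in the denominator and makes everything go to zero without any additional truncation of the weights.
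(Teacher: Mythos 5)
Your proof is correct and takes essentially the same route as the paper: the Hoffman--Wielandt bound of Proposition \ref{prop:hoffman-wielandt}, the entrywise estimate $\bigl(\sqrt{p_{ij}(1-p_{ij})}-\sqrt{p_{ij}}\bigr)^2\le p_{ij}^3\le W_i^2W_j^2\|i-j\|^{-2\alpha}$ summed against $1/(Nc_N)$, and Markov's inequality. Your treatment of the diagonal via conditional independence (variance of the row sum equals the sum of entrywise variances given the weights) is exactly the paper's observation that the cross terms with $k\neq\ell$ have zero expectation since $\E[G_{ik}G_{i\ell}]=0$, just phrased differently.
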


\begin{proof}
    Again using Propostion \ref{prop:hoffman-wielandt}, we have that 
    \begin{align*}
   \E\left[d_L^3(\ESD(\tilde{\bfD}_{N,g}), \ESD(\bfD_{N,g}))\right] &\le \frac{1}{N}\E\Tr\left( (\bfD_{N,g}- \tilde{\bfD}_{N,g})^2\right)\\ 
&=\frac{1}{N}\E\left[\sum_{1\le i,j\le N}\left(\bfD_{N,g}(i,j)-\tilde{\bfD}_{N,g}(i,j)\right)^2\right]\\
&=\frac{1}{N}\sum_{1\le i\neq j\le N}\E\left[ (\A_{N,g}(i,j)-\tilde{\A}_{N,g})^2\right]\\
&\qquad + \frac{1}{N}\sum_{i=1}^N \E\left[\left(\sum_{k\neq i} \A_{N,g}(i,k)-\tilde{\A}_{N,g}(i,k)\right)^2\right]
    \end{align*}

    Dealing with the last two terms separately as before, we proceed by bounding the first term. Since $G_{ij}\sim\mathcal N(0,1)$ are independent of $(W_i)$
and of each other,
$$ \E\left[\left(\A_{N, g}(i, j)-\widetilde{\A}_{N, g}(i, j)\right)^2\right]=\frac{1}{c_N}  \E\left[\left(\sqrt{p_{i j}}-\sqrt{p_{i j}\left(1-p_{i j}\right)}\right)^2\right]=\frac{1}{c_N} \E\left[p_{i j}\left(1-\sqrt{1-p_{i j}}\right)^2\right] .$$

Using $0\le p\le 1$ and the elementary bound $0\le 1-\sqrt{1-p}\le p$ we have $\big(1-\sqrt{1-p}\,\big)^2\le
p^2$, hence \begin{equation} \E\Big[(\A_{N,g}(i,j)-\widetilde \A_{N,g}(i,j))^2\Big] \ \le\ \frac1{c_N}\,\E\big[p_{ij}
^2\big]  \le \frac{C}{c_N}\,\frac{\mathbf E[W_1^2]^2}{\|i-j\|^{2\alpha}}, \label{eq:I1-ptwise} 
\end{equation} Summing
\eqref{eq:I1-ptwise} over $i\ne j$ and applying Lemma~\ref{lem:sumtoint} and using the bounds in \eqref{eq:boundsalpha} we have that the term goes to zero.

%    $$\frac{1}{Nc_N} \sum_{i\neq j} \frac{\mathbf{E}[W_1^2]^2}{\|i-j\|^{2\alpha}}\approx \frac{N^{2-2\alpha}}{N^{2-\alpha}}= N^{-\alpha}\to 0.$$

    Expanding the square in the second term, we have
\begin{align*}
   & \frac{1}{N}\sum_{i=1}^N \E\left[\left(\sum_{k\neq i} \A_{N,g}(i,k)-\tilde{\A}_{N,g}(i,k)\right)^2\right]= \frac{1}{N}\sum_{i=1}^N \sum_{k\neq i} \E\left[\left(\A_{N,g}(i,k)-\tilde{\A}_{N,g}(i,k)\right)^2\right]\\
    &+\frac{1}{N}\sum_{i=1}^N\sum_{k\neq i}\sum_{\ell\neq i, k}\E\left[\left( \A_{N,g}(i,k)-\tilde{\A}_{N,g}(i,k)\right)\left( \A_{N,g}(i,\ell)-\tilde{\A}_{N,g}(i,\ell)\right)\right].
\end{align*} 

Again the first term in above sum is of the order $N^{-\alpha}$ and the expectation in the second term is zero. Indeed, using the independence between $(W_i)_{i\in \Ver_N}$ and $G_{i,j}$ we have for $k\neq \ell$,
\begin{align*}
&\E\left[\left( \A_{N,g}(i,k)-\tilde{\A}_{N,g}(i,k)\right)\left( \A_{N,g}(i,\ell)-\tilde{\A}_{N,g}(i,\ell)\right)\right]\\
&= \E\left[ (\sqrt{p_{ik}(1-p_{ik})}-\sqrt{p_{ik}})(\sqrt{p_{i\ell}(1-p_{i\ell})}-\sqrt{p_{i\ell}})\right]\E\left[G_{i \wedge k, i \vee k} G_{i \wedge \ell, i \vee \ell}\right]=0,
\end{align*}
where the last term is zero for $\ell \notin \{i,k\}$ due to independence. This completes the proof of the Lemma.
\end{proof}

We conclude this subsection with one final simplification. For any $i\neq j$, let
$$r_{ij}= \frac{W_i W_j}{\|i-j\|^{\alpha}}\, ,$$ and let $r_{ii}=0$. Define the matrix $\widehat{\A}_{N,g}$ as follows: for $i\neq j$, 
\[
\widehat{\A}_{N,g}(i,j)= \frac{\sqrt{r_{i\wedge j, i\vee j}}}{\sqrt{c_N}} G_{i\wedge j, i\vee j}\]
and put $0$ on the diagonal. Define Laplacian matrix $\widehat{\bfD}_{N,g}$ accordingly with $\widehat{\A}_{N,g}$.

\begin{lemma}\label{lemma:removal of wedge 1}
  $$\lim_{N\to\infty}\prob\left( d_L(\ESD(\tilde{\bfD}_{N,g}), \ESD(\widehat{\bfD}_{N,g}))>\varepsilon\right) =0.$$  
\end{lemma}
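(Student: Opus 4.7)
The plan is to follow exactly the same template as in Lemmas \ref{lemma:mean zero gaussianisation} and \ref{lemma:gaussianisation without 1-}: invoke Proposition \ref{prop:hoffman-wielandt} to reduce the Lévy--Prokhorov distance to a Hoffman--Wielandt bound, and show that the normalised squared Frobenius-type distance goes to $0$:
\begin{equation*}
\E\bigl[d_L^3(\ESD(\tilde\bfD_{N,g}),\ESD(\widehat\bfD_{N,g}))\bigr]\le \frac1N \E\,\Tr\bigl((\tilde\bfD_{N,g}-\widehat\bfD_{N,g})^2\bigr)\xrightarrow{N\to\infty}0.
\end{equation*}
An application of Markov's inequality then yields the claim. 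Expanding the trace splits the right hand side into an off-diagonal contribution $\tfrac1N\sum_{i\ne j}\E[(\tilde\A_{N,g}(i,j)-\widehat\A_{N,g}(i,j))^2]$ and a diagonal contribution $\tfrac1N\sum_i\E\bigl[\bigl(\sum_{k\ne i}(\tilde\A_{N,g}(i,k)-\widehat\A_{N,g}(i,k))\bigr)^2\bigr]$.

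The only new input compared to the previous lemma is a bound on the per-entry difference. Writing $Z_{ij}\coloneqq \tilde\A_{N,g}(i,j)-\widehat\A_{N,g}(i,j)=c_N^{-1/2}(\sqrt{p_{ij}}-\sqrt{r_{ij}})G_{i\wedge j,i\vee j}$, and noting that $p_{ij}=r_{ij}\wedge 1$, one has $(\sqrt{p_{ij}}-\sqrt{r_{ij}})^2=0$ when $r_{ij}\le 1$ and $(\sqrt{r_{ij}}-1)^2\le r_{ij}$ when $r_{ij}>1$. Hence
\begin{equation*}
\E[Z_{ij}^2]=\frac{1}{c_N}\,\mathbf{E}[(\sqrt{p_{ij}}-\sqrt{r_{ij}})^2]\le \frac{1}{c_N}\,\mathbf{E}[r_{ij}\one_{r_{ij}>1}]\le \frac{1}{c_N}\cdot\frac{\mathbf{E}[(W_iW_j)^2]}{\|i-j\|^{2\alpha}}=\frac{\mathbf{E}[W^2]^2}{c_N\|i-j\|^{2\alpha}},
\end{equation*}
where the last inequality uses the elementary bound $t\one_{t>u}\le t^2/u$ and the hypothesis $\tau>3$ which gives $\mathbf{E}[W^2]<\infty$. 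The resulting upper bound matches the one in the proof of Lemma \ref{lemma:gaussianisation without 1-}, so summing gives $\frac{1}{Nc_N}\sum_{i\ne j}\|i-j\|^{-2\alpha}=\bigO_N(N^{-\alpha})$ for $\alpha<1/2$ and $\bigO_N(N^{\alpha-1})$ otherwise, and in all cases tends to zero since $0<\alpha<1$.

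For the diagonal contribution I expand the square and separate the sum according to whether the summation indices are equal. The ``square'' part $\frac1N\sum_i\sum_{k\ne i}\E[Z_{ik}^2]$ is (up to a combinatorial factor) the same as the off-diagonal sum and thus vanishes. For the cross terms with $k\ne\ell$ (both different from $i$), conditioning on the weights and using the independence of $G_{i\wedge k,i\vee k}$ and $G_{i\wedge \ell,i\vee \ell}$ (the two unordered pairs $\{i,k\}$, $\{i,\ell\}$ are distinct) together with their zero means gives
\begin{equation*}
\E[Z_{ik}Z_{i\ell}]=\frac{1}{c_N}\mathbf{E}[(\sqrt{p_{ik}}-\sqrt{r_{ik}})(\sqrt{p_{i\ell}}-\sqrt{r_{i\ell}})]\,\E[G_{i\wedge k,i\vee k}G_{i\wedge \ell,i\vee \ell}]=0,
\end{equation*}
exactly as in Lemma \ref{lemma:gaussianisation without 1-}. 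Combining both contributions finishes the argument.

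The only non-routine step is the control of the truncation discrepancy $(\sqrt{r_{ij}\wedge 1}-\sqrt{r_{ij}})^2$; this is handled cleanly via the elementary inequality above together with the finite second moment of $W$ (which is where $\tau>3$ enters). Everything else is a direct adaptation of the previous two lemmas.
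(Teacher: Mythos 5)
Your argument is correct, and it follows the same overall template as the paper (Hoffman--Wielandt via Proposition \ref{prop:hoffman-wielandt}, splitting into an off-diagonal sum and a diagonal sum whose cross terms vanish because $\E[G_{i\wedge k,i\vee k}G_{i\wedge \ell,i\vee\ell}]=0$ for $k\neq\ell$, then Markov). Where you genuinely diverge is in how the discrepancy on the event $\{r_{ij}>1\}$ is controlled. You keep the \emph{same} Gaussians in both matrices and bound the entrywise difference directly, via $(\sqrt{r_{ij}\wedge 1}-\sqrt{r_{ij}})^2\le r_{ij}\one_{r_{ij}>1}\le r_{ij}^2$, so that everything reduces to $\mathbf{E}[W^2]<\infty$ (i.e.\ $\tau>3$) and the sum $\frac{1}{Nc_N}\sum_{i\neq j}\|i-j\|^{-2\alpha}\to 0$. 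The paper instead builds an auxiliary matrix $L_{N,g}$ that coincides with $\tilde{\A}_{N,g}$ on $\{r_{ij}<1\}$ and uses \emph{fresh independent} Gaussians $G'_{ij}$ on the complement; $L_{N,g}$ is equal in law to $\widehat{\A}_{N,g}$, and the error is then bounded through Cauchy--Schwarz and the tail estimate $\prob(r_{ij}\ge 1)\le c\|i-j\|^{-\alpha(\tau-1)}$ (Lemma \ref{lemma:twotails}), giving a rate $C(N^{-\alpha(\tau-2)}+N^{-\alpha(\tau-1)/2})$. Your route is more direct and in fact proves the lemma exactly as stated (for the two matrices built from the common Gaussians), whereas the paper's proof compares $\tilde{\bfD}_{N,g}$ with a distributional copy of $\widehat{\bfD}_{N,g}$, which suffices for transferring the ESD limit; the paper's coupling buys an explicit $\tau$-dependent rate but needs the same second-moment hypothesis. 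Two cosmetic points: at the boundary $2\alpha=1$ your bound picks up a $\log N$ factor, i.e.\ it is $\bigO_N(N^{\alpha-1}\log N)$ rather than $\bigO_N(N^{\alpha-1})$, which of course still vanishes; and the exponent in the Hoffman--Wielandt bound is $d_L^3$, as you use in the display, so Markov should be applied to that cubed distance — exactly as in the preceding lemmas.
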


\begin{proof}
For any $1\le i\neq j\le N$, define the set $\mathcal C_{ij}=\{r_{ij}<1\}$. Let $(X_{i,j})_{i\ge j}$ be defined as follows
$$X_{ij}= \frac{\sqrt{r_{ij}}}{\sqrt{c_N}} G^{\prime}_{ij},$$
where $(G^{\prime}_{ij})_{i\ge j}$ be a sequence of independent $\gauss(0,1)$ random variables, independent of the previously defined $(G_{ij})$ and $(W_i)_{i\in \Ver_N}$. Define a symmetric matrix $L_{N,g}$ as follows: for $1\le i<j\le N$,
$$L_{N,g}(i,j)= \tilde{\A}_{N,g}(i,j)\one_{\mathcal C_{ij}}+ X_{ij} \one_{\mathcal C_{ij}^c}.$$
We put zero on the diagonal and consider the $\bfD_{L}$ as the Laplacian matrix corresponding to $L_{N,g}$. Note that $L_{N,g}$ has the same distribution as $\widehat{\A}_{N,g}$ and hence the $\bfD_{L}$ has the same distribution as $\widehat{\bfD}_{N,g}$. 

By Proposition \ref{prop:hoffman-wielandt}, we again have
\begin{align*}
&\E\left[d^3\left(\ESD(\bfD_{L}), \ESD(\tilde{\bfD}_{N,g})\right)\right] 
\le \frac{1}{N}\E\left[\sum_{1\le i,j \le N} \left(\bfD_L(i,j)-\tilde{\bfD}_{N,g}(i,j)\right)^2\right]\\
&=\frac{1}{N}\sum_{1\le i\neq j\le N}\E\left[ (L_{N,g}(i,j)-\tilde{\A}_{N,g}(i,j))^2\right]\\
&\qquad + \frac{1}{N}\sum_{i=1}^N \E\left[\left(\sum_{k\neq i} L_{N,g}(i,k)-\tilde{\A}_{N,g}(i,k)\right)^2\right]\\
&\le \frac{4}{N}\sum_{1\le i\neq j\le N}\E\left[ (L_{N,g}(i,j)-\tilde{\A}_{N,g}(i,j))^2\right]\\
    &+\frac{1}{N}\sum_{i=1}^N\sum_{k\neq i}\sum_{\ell\neq i, k}\E\left[\left( L_{N,g}(i,k)-\tilde{\A}_{N,g}(i,k)\right)\left( L_{N,g}(i,\ell)-\tilde{\A}_{N,g}(i,\ell)\right)\right]
\end{align*}

Again, we deal with the two sums separately. The first sum can be bounded above as follows:
\begin{align*}
 \frac{4}{N}\sum_{1\le i\neq j\le N}\E\left[ (L_{N,g}(i,j)-\tilde{\A}_{N,g}(i,j))^2\right]
 &\le \frac{4}{N}\sum_{1\le i\neq j\le N}\E\left[ (\tilde{\A}_{N,g}(i,j)- X_{ij})^2\one_{\mathcal C_{ij}}\right]\\
 &\le \frac{8}{N}\sum_{1\le i\neq j\le N}\E\left[\tilde{\A}_{N,g}(i,j)^2\one_{\mathcal C_{ij}}\right]+ \E\left[X_{ij}^2\one_{\mathcal C_{ij}}\right]\\
 &\le\frac{1}{Nc_N}\sum_{i\neq j\in\Ver_N}^N \ep[G_{i\wedge j,i\vee j}^2\one_{\mathcal C_{ij}^c}]+\ep[X_{ij}^2\one_{\mathcal C_{ij}^c}]  \\
        &\leq \frac{1}{Nc_N}\sum_{i\neq j\in\Ver_N}^N \pr(\mathcal C_{ij}^c)+ \ep[X_{ij}^4]^{1/2}\pr(\mathcal C_{ij}^c)^{1/2} \\
        &\leq \frac{1}{Nc_N}\sum_{i\neq j\in\Ver_N}^N \pr(\mathcal C_{ij}^c)+\frac{3\ep[W_i^2 W_j^2]^{1/2}}{\|i-j\|^{\alpha}}\,%\pr\left(W_iW_j^\sigma\geq |i-j|^\alpha\right)^{1/2} 
        \pr(\mathcal C_{ij}^c)^{1/2}\\
        &\le C( N^{-\alpha(\tau-2)}+ N^{-\frac{\alpha}{2}(\tau-1)})=\mathrm{o}_N(1),
\end{align*}
where we have used in the last line the following estimate:
\[
\pr(\mathcal C_{ij}^c) \leq \pr\left(W_iW_j\geq \|i-j\|^\alpha\right) 
    \leq  \frac{c}{\|i-j\|^{\alpha(\tau-1)}}
    \]
which follows from Lemma \ref{lemma:twotails}. For the second term note that

\begin{align*}
    &\E\left[\left( L_{N,g}(i,k)-\tilde{\A}_{N,g}(i,k)\right)\left( L_{N,g}(i,\ell)-\tilde{\A}_{N,g}(i,\ell)\right)\right]= \frac{1}{c_N}\mathbf{E}[\sqrt{p_{ik}}\sqrt{p_{i\ell}} \one_{\mathcal C_{ij}^c}\one_{\mathcal C_{i\ell}^c}]\E[G_{ik}G_{i\ell}]\\
    &\qquad -\frac{1}{c_N}\ep[\sqrt{p_{ik}}\sqrt{r_{i\ell}} \one_{\mathcal C_{ij}^c}\one_{\mathcal C_{i\ell}^c}]\E[G_{ik}G_{i\ell}^{\prime}]-\frac{1}{c_N}\ep[\sqrt{r_{ik}}\sqrt{p_{i\ell}} \one_{C_{ij}^c}\one_{C_{i\ell}^c}]\E[G_{ik}^{\prime} G_{i\ell}]\\
    &\qquad +\frac{1}{c_N}\ep[\sqrt{r_{ik}}\sqrt{r_{i\ell}} \one_{C_{ij}^c}\one_{C_{i\ell}^c}]\E[G_{ik}^{\prime} G_{i\ell}^{\prime}],
\end{align*}
and since $k\neq \ell$, all the above terms are zero. Thus the proof follows.
\end{proof}

\subsection{Truncation}
Let $m>1$ be a truncation threshold and define $W_i^m= W_i\one_{W_i\le m}$ for any $i\in\Ver_N$. For all $N\in\N$, we define a new random matrix as follows: 
Let 
\[
r_{ij}^m
    = \frac{W_i^m W_j^m}{\|i-j\|^\alpha}\qquad i\neq j\in \Ver_N\,,
\]
and let $\A_{N,g,m}$ be defined for $i \neq j$ as
$$\A_{N,g,m}(i,j)= \frac{\sqrt{r_{ij}^m}}{\sqrt{c_N}}G_{i\wedge j, i\vee j},$$
and put $0$ on the diagonal. Analogously define $\bfD_{N,g,m}$.

\begin{lemma}[{Truncation}]\label{lemma:truncation}
   For every $\delta>0$ one has 
   $$
   \limsup_{m\to \infty}\lim_{N\to\infty}\prob\left(d_L(\ESD(\bfD_{N, g,m}), \ESD(\tilde{\bfD}_{N,g}))> \delta\right)=0\,.
   $$
\end{lemma}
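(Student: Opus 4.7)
The plan is to reduce the comparison to one where the square roots match cleanly, and then apply the Hoffman--Wielandt bound in Proposition~\ref{prop:hoffman-wielandt} just as in the previous lemmas. By the triangle inequality for the L\'evy--Prokhorov metric, combined with Lemma~\ref{lemma:removal of wedge 1}, it suffices to prove
\[
\limsup_{m\to\infty}\lim_{N\to\infty}\prob\bigl(d_L(\ESD(\bfD_{N,g,m}),\ESD(\widehat{\bfD}_{N,g}))>\delta\bigr)=0,
\]
since $\widehat{\bfD}_{N,g}$ and $\bfD_{N,g,m}$ are both built from $\sqrt{r}$-type entries (without the $\wedge 1$ cut) via the \emph{same} Gaussian family $(G_{i\wedge j,i\vee j})$. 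Setting $\Delta_{ij}:=\widehat{\A}_{N,g}(i,j)-\A_{N,g,m}(i,j)=\tfrac{1}{\sqrt{c_N}}(\sqrt{r_{ij}}-\sqrt{r_{ij}^m})\,G_{i\wedge j,i\vee j}$ for $i\neq j$, the difference of Laplacians has off-diagonal entry $\Delta_{ij}$ and $i$-th diagonal entry $-\sum_{k\neq i}\Delta_{ik}$; Proposition~\ref{prop:hoffman-wielandt} then reduces the problem to controlling
\[
\frac{1}{N}\E\Tr\bigl[(\widehat{\bfD}_{N,g}-\bfD_{N,g,m})^2\bigr]=\frac{1}{N}\sum_{i\neq j}\E[\Delta_{ij}^2]+\frac{1}{N}\sum_{i}\E\Bigl[\Bigl(\sum_{k\neq i}\Delta_{ik}\Bigr)^2\Bigr].
\]

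The key algebraic identity is $\sqrt{r_{ij}^m}=\sqrt{r_{ij}}\,\one_{\{W_i\leq m,\,W_j\leq m\}}$, which gives $(\sqrt{r_{ij}}-\sqrt{r_{ij}^m})^2=r_{ij}\,\one_{\{W_i>m\}\cup\{W_j>m\}}$. Expanding $\bigl(\sum_{k\neq i}\Delta_{ik}\bigr)^2$ into squares and cross terms, the independence of $G_{i\wedge k,i\vee k}$ and $G_{i\wedge\ell,i\vee\ell}$ across distinct edges forces $\E[\Delta_{ik}\Delta_{i\ell}]=0$ for $k\neq\ell$, exactly as in the proof of Lemma~\ref{lemma:gaussianisation without 1-}. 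Hence
\[
\frac{1}{N}\E\Tr\bigl[(\widehat{\bfD}_{N,g}-\bfD_{N,g,m})^2\bigr]=\frac{2}{Nc_N}\sum_{i\neq j}\frac{\mathbf{E}\bigl[W_iW_j\,\one_{\{W_i>m\}\cup\{W_j>m\}}\bigr]}{\|i-j\|^\alpha},
\]
and the i.~i.~d.\ property of the weights lets this expectation factor as the single constant $\varepsilon_m:=\mathbf{E}[W_1]^2-\mathbf{E}[W_1\,\one_{W_1\leq m}]^2$, independent of $i,j$. Pulling $\varepsilon_m$ out of the sum and using the exact identity $c_N=\tfrac{1}{N}\sum_{i\neq j}\|i-j\|^{-\alpha}$ gives the clean bound $2\varepsilon_m$, with no $N$-dependent residual.

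Since $\tau>3$ implies $\mathbf{E}[W_1]<\infty$, dominated convergence yields $\mathbf{E}[W_1\,\one_{W_1\leq m}]\uparrow\mathbf{E}[W_1]$, whence $\varepsilon_m\to 0$ as $m\to\infty$. A Markov inequality then produces, uniformly in $N$,
\[
\prob\bigl(d_L(\ESD(\bfD_{N,g,m}),\ESD(\widehat{\bfD}_{N,g}))>\delta\bigr)\leq\frac{2\varepsilon_m}{\delta^3},
\]
so the $N\to\infty$ limit is immediate and the $\limsup_{m\to\infty}$ vanishes. I do not expect any genuinely new obstacle beyond the bookkeeping already carried out in Lemmas~\ref{lemma:mean zero gaussianisation}--\ref{lemma:removal of wedge 1}; the only delicate point is recognising that the \emph{exact} normalisation $c_N$ produces a cancellation independent of $N$, so that the double limit can be executed in the order required by the statement.
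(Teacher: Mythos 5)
Your proposal is correct and follows essentially the same route as the paper: triangle inequality to reduce to comparing $\bfD_{N,g,m}$ with $\widehat{\bfD}_{N,g}$, Hoffman--Wielandt, vanishing Gaussian cross terms as in Lemma \ref{lemma:removal of wedge 1}, and a bound on the truncation error that tends to $0$ as $m\to\infty$. The only (cosmetic) difference is that you use the exact identity $(\sqrt{r_{ij}}-\sqrt{r^m_{ij}})^2=r_{ij}\one_{\{W_i>m\}\cup\{W_j>m\}}$ and dominated convergence to get $\varepsilon_m\to 0$, whereas the paper bounds via $(a-b)^2\le|a^2-b^2|$ and Lemma \ref{lem:Par_trunc} to obtain the explicit rate $\bigO_m(m^{2-\tau})$; both yield the same conclusion.
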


\begin{proof}
    The proof follows the same idea as the previous lemmas. Recall that $$\widehat{\bA}_{N,g}(i,j) = \frac{\sqrt{r_{ij}}}{\sqrt{c_N}}G_{i\wedge j,i\vee j}$$ for all $i\neq j$, with 0 on the diagonal, and $\widehat{\bfD}_{N,g}$ is the corresponding Laplacian. Once again, we have 
   \begin{align*}
&\E\left[d^3\left(\ESD(\bfD_{N,g,m}), \ESD(\widehat{\bfD}_{N,g})\right)\right] \\
&\le \frac{1}{N}\E\left[\sum_{1\le i,j \le N} \left(\bfD_{N,g,m}(i,j)-\widehat{\bfD}_{N,g}(i,j)\right)^2\right]\\
&=\frac{1}{N}\sum_{1\le i\neq j\le N}\E\left[ (\A_{N,g,m}(i,j)-\widehat{\A}_{N,g}(i,j))^2\right]\\
&\qquad + \frac{1}{N}\sum_{i=1}^N \E\left[\left(\sum_{k\neq i} \A_{N,g,m}(i,k)-\widehat{\A}_{N,g}(i,k)\right)^2\right]\\
&\le \frac{4}{N}\sum_{1\le i\neq j\le N}\E\left[ (\A_{N,g,m}(i,j)-\widehat{\A}_{N,g}(i,j))^2\right]\\
    &+\frac{1}{N}\sum_{i=1}^N\sum_{k\neq i}\sum_{\ell\neq i, k}\E\left[\left( \A_{N,g,m}(i,k)-\widehat{\A}_{N,g}(i,k)\right)\left( \A_{N,g,m}(i,\ell)-\widehat{\A}_{N,g}(i,\ell)\right)\right].
\end{align*}
The proof of Lemma \ref{lemma:removal of wedge 1} aids us by taking care of the second factor in the last line, which turns out to be equal to 0 by the independence of Gaussian terms. For the first term, the common Gaussian factor pulls out by independence, yielding the upper bound 
$$
\frac{4}{Nc_N}\sum_{1\leq i\neq j\leq N} \frac{\mathbf{E}\left[\left(\sqrt{W_iW_j} - \sqrt{W_i^mW_j^m}\right)^2\right]}{\|i-j\|^{\alpha}} \le \frac{4}{Nc_N}\sum_{1\leq i\neq j\leq N} \frac{\mathbf{E}[W_iW_j - W_i^mW_j^m]}{\|i-j\|^{\alpha}},$$
where the inequality follows by using the identity $(a-b)^2\le |a^2-b^2|$ for any $a,b\geq 1$. Adding and subtracting the term $W_iW_j^m$ inside the expectation gives us that 
\begin{align*}
&\frac{4}{N}\sum_{1\le i\neq j\le N}\E\left[ (\A_{N,g,m}(i,j)-\widehat{\A}_{N,g}(i,j))^2\right]\\
&\le \frac{4}{Nc_N}\sum_{1\le i\neq j\le N} \frac{\mathbf{E}[W_i]\mathbf{E}[W_j\one_{\{W_j>m\}}] + \mathbf{E}[W_j^m]\mathbf{E}[W_i\one_{\{W_i>m\}}]}{\|i-j\|^{\alpha}} \\
&\le \frac{C_{\tau}}{Nc_N}\sum_{1\leq i\neq j\le N} \frac{m^{2-\tau}}{\|i-j\|^{\alpha}} = \bigO_m(m^{2-\tau}),
\end{align*}
where the last inequality follows from Lemma \ref{lem:Par_trunc}, with $C_\tau$ a $\tau-$dependent constant. Markov inequality concludes the proof. 
\end{proof}

\subsection{Decoupling}
Since we now have bounded weights, the decoupling result follows from the arguments from \cite[Lemma 4.12]{BDJ2006}. See also the proof of \cite[Lemma 2.4]{CHHS} for the inhomogeneous extension.
\begin{lemma}\label{lemma:decoupling} Let $(Z_i : i \geq 1)$ be a family of i.~i.~d. standard normal random variables, independent of $(G_{i,j} : 1 \leq i \leq j)$. Define a diagonal matrix $Y_N$ of order $N$ by
\[
Y_N(i,i) = Z_i \sqrt{\frac{\sum_{k\neq i} r_{ik}^m}{c_N}}, \quad 1 \leq i \leq N.
\]

and let
\begin{equation}\label{eq:decoupled Laplacian}
\bfD_{N,g,c} = {\A}_{N,g,m} + Y_N. 
\end{equation}

Then for every $m>1$, and for any $k \in \mathbb{N}$,
\[
\lim_{N \to \infty} \frac{1}{N} \E \left( \operatorname{Tr} \left[ (\bfD_{N,g,c})^{2k} - (\bfD_{N,g,m})^{2k} \right] \right) = 0.
\]
\textit{and}
\[
\lim_{N \to \infty} \frac{1}{N^2} \E \left( \operatorname{Tr}^2 \left[ (\bfD_{N,g,c})^k \right] - \operatorname{Tr}^2 \left[ (\bfD_{N,g,m})^k \right] \right) = 0.
\]
\end{lemma}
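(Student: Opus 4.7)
The two matrices $\bfD_{N,g,m}$ and $\bfD_{N,g,c}$ share the same off-diagonal Gaussian part $\A_{N,g,m}$ and differ only in their diagonals. Set $D_i := \bfD_{N,g,m}(i,i) = -\sum_{k\neq i} \A_{N,g,m}(i,k)$ and $Y_i := Y_N(i,i) = Z_i\sigma_i$, where $\sigma_i^2 = \sum_{k\neq i} r_{ik}^m/c_N$. Crucially, both $D_i$ and $Y_i$ are mean-zero Gaussians with identical marginal variance $\sigma_i^2$, and they differ only in their joint distribution with the off-diagonal entries: $Y_i$ is independent of $\A_{N,g,m}$ while $D_i$ is a linear combination of the off-diagonal Gaussians. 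The plan is to expand the traces via closed walks, apply the Wick (Isserlis) formula and compare contributions.

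For the first claim, expand $\Tr[\bfD^{2k}]$ as a sum over closed walks $i_0 \to i_1 \to \cdots \to i_{2k-1} \to i_0$; separate each step into diagonal ($i_j = i_{j+1}$) or off-diagonal ($i_j \neq i_{j+1}$), and take expectation via Wick. For the decoupled matrix, the independence of $Y$ forces pairings of $Y_i$ to be of the form $(Y_i, Y_i)$ at the same vertex. For $\bfD_{N,g,m}$, Wick additionally allows the pairings $(D_i, \A_{N,g,m}(i,k))$ with covariance $-r_{ik}^m/c_N$ and $(D_i, D_j)$ with $i\neq j$ and covariance $r_{ij}^m/c_N$. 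Pairings of the first type (within-vertex diagonal-diagonal and pure off-diagonal pairings) contribute identically to both expansions because $\Var(D_i)=\Var(Y_i)=\sigma_i^2$. The difference therefore comes entirely from walks employing at least one of the two ``extra'' pairings. One opens up each $D_i$ as $-\sum_{k\neq i}\A_{N,g,m}(i,k)$ and verifies graphically that any such extra pairing forces two otherwise-free summation indices to coincide (or adds an unpaired constraint), so it reduces the count of free vertex-labels in the walk by at least one compared to the dominant walks. The dominant walks contribute $O(1)$ to $\frac{1}{N}\E\Tr[\bfD^{2k}]$; using the boundedness $W_i^m\le m$ together with $c_N\sim c_0 N^{1-\alpha}$ to control the weight sums, the walks with extra pairings contribute $\bigO_N(N^{-1}m^{C_k})=o_N(1)$ as $N\to\infty$.

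The second statement about $\Tr^2$ is proved analogously: expand $\Tr[\bfD^k]\cdot\Tr[\bfD^k]$ as a double sum over a pair of closed walks, apply Wick, and compare. Cross-pairings between the two walks that involve only $(D_i,D_i)$ or $(Y_i,Y_i)$ contribute identically, while the additional pairings available for $D$ again reduce the number of free indices, now by at least one in the combined pair of walks. This yields an error of $o_N(N^2)$, which vanishes after the $N^{-2}$ normalisation. The main technical obstacle is the combinatorial bookkeeping of Step~3, namely making precise the statement that each extra pairing costs one free summation index; this is exactly the content of \cite[Lemma~4.12]{BDJ2006} and its weighted extension \cite[Lemma~4.2]{CHHS}, and the only point that requires attention in our setting is to absorb the spatial weight factors $r_{ij}^m = W_i^m W_j^m/\|i-j\|^\alpha$ into the walk count, which is routine once the weights are bounded by $m$.
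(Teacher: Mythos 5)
Your proposal is correct and follows essentially the same route as the paper, which proves this lemma only by remarking that, once the weights are truncated at $m$, the decoupling follows from the arguments of \cite[Lemma 4.12]{BDJ2006} and \cite[Lemma 4.2]{CHHS} — precisely the Wick/closed-walk comparison you sketch (the two diagonals have identical conditional variances, so in the difference of moments only pairings involving the extra covariances $\E[D_iD_j]$ and $\E[D_i\A_{N,g,m}(i,k)]$ survive, and these lose summation freedom), with the detailed bookkeeping deferred to the same two references. The only minor imprecision is the quoted rate $\bigO_N(N^{-1}m^{C_k})$: the correct accounting runs through the $c_N\sim c_0N^{1-\alpha}$ scaling as in Fact \ref{fact:Laplacian-claim410} rather than a plain factor of $N$ per free index, but since only $\lito_N(1)$ is needed this does not affect the argument.
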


% \[
% \mathbf{P}_N(i,j)= p_{ij}:=\frac{W_i W_i}{\|i-j\|^{\alpha}}\wedge 1. \]
% Let $\bar{\A}_{N}= \frac{1}{\sqrt{c_N}}(\mathbb{A}_{\mathbb{G}_N}- \mathbf{P}_N)$
% and
% $\bar{\bfD}_{N}$ is defined analogously with $\bar{\A}_N$.

% To prove the above result we will work with the Gaussianized version of the above matrix. 
% Let $\A_{N,g}$ be the matrix defined as follows. Let $\bfD_{N,g}$ be the Laplacian matrix corresponding to $\A_{N,g}$. 

% The proof is very similar to the one presented in \cite{Chatterjee:Hazra} and to be modified along the lines of \cite{CHMS2025} and uses the classical result of \cite{chatterjee2005simple}. We give a brief sketch only the estimates of the error probabilities in this setting. In \cite{CHMS2025}, the Gaussinization was done with truncated weights but here we will not need that and we can still estimate the errors.

\section{Moment method: Proof of Theorem \ref{theorem:Laplacian limiting measure}}\label{sec:moment method Laplacian}
We begin by stating a key proposition that describes the limit of the empirical spectral distribution of $\bfD_{N,g,c}$. The majority of this section will be devoted to the proof of this proposition, and so, we defer the proof of the proposition to page \pageref{proof:proof of prop}. 

\begin{proposition}\label{main:prop}
Let $\ESD(\bfD_{N,g,c})$ be the empirical spectral distribution of $\bfD_{N,g,c}$ defined in \eqref{eq:decoupled Laplacian}. Then there exists a deterministic measure $\nu_{\tau}$ on $\mathbb R$ such that
    $$
    \lim_{N\to \infty}\ESD(\bfD_{N,g,c})=\nu_{\tau,m}\qquad\text{ in $\,\prob$--probability}\,.
    $$
\end{proposition}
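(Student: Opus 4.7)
The plan is to prove Proposition~\ref{main:prop} by the method of moments, working conditionally on the weight sequence $(W_i)_{i \in \Ver_N}$. Since, once the weights are fixed, the matrix $\bfD_{N,g,c}$ has entries that are linear combinations of independent centred Gaussians (the off-diagonal $G_{ij}$ appearing in $\A_{N,g,m}$ and the diagonal $Z_i$ appearing in $Y_N$), I would compute for each $k \in \mathbb{N}$ the normalised expected trace $m_k^{(N)} := \frac{1}{N}\E\big[\Tr(\bfD_{N,g,c})^k\big]$, show that it converges to a deterministic limit $m_k$, verify the variance $\Var\big(\frac{1}{N}\Tr(\bfD_{N,g,c})^k\big) \to 0$, and check that the sequence $(m_k)_{k\geq 0}$ grows slowly enough to satisfy Carleman's condition. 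These three ingredients together identify a unique measure $\nu_{\tau,m}$ having $(m_k)$ as its moments and yield convergence of $\ESD(\bfD_{N,g,c})$ to $\nu_{\tau,m}$ in $\prob$-probability.

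For the moment computation, I would expand
$$\Tr(\bfD_{N,g,c})^k = \sum_{i_1,\ldots,i_k \in \Ver_N} \bfD_{N,g,c}(i_1,i_2)\cdots\bfD_{N,g,c}(i_k,i_1)$$
and classify each closed walk according to which of its $k$ steps are off-diagonal (contributed by $\A_{N,g,m}$, requiring $i_\ell \neq i_{\ell+1}$) and which are diagonal (contributed by $Y_N$, forcing $i_\ell = i_{\ell+1}$). Because the $G_{ij}$ are independent of the $Z_i$ and of the weights, one may apply Wick's formula (after conditioning on the $W_i^m$) so that only perfect pairings of the Gaussian factors survive in expectation; each such pairing contributes a product of $r_{ij}^m/c_N$ factors together with sums of $r_{ik}^m/c_N$ along diagonal edges. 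The boundedness of $W_i^m$ by $m$ keeps every such summand deterministically bounded, so the weight-averaged sums can then be analysed as in \cite{CHHS} and \cite{Chatterjee:Hazra}.

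The key simplification, highlighted in the authors' proof outline, is that the limiting moments are independent of $\alpha$. Indeed, because $c_N \sim c_0 N^{1-\alpha}$, each summand of the form $\|i-j\|^{-\alpha}/c_N$ behaves, after averaging, like $1/N$ uniformly in $\alpha \in [0,1)$; I would make this precise by showing that for each fixed Wick pairing, replacing the spatial kernel $\|i-j\|^{-\alpha}$ by $1$ and $c_N$ by $N-1$ changes the contribution by $o_N(1)$. This reduction to $\alpha = 0$ turns the combinatorial problem into the moment problem for the Laplacian of an inhomogeneous Gaussian Wigner matrix with variance profile $W_i^m W_j^m / N$, which is exactly the setting of \cite{BDJ2006} and its inhomogeneous refinement in \cite{CHHS}. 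Dominant contributions then come from non-crossing pair partitions of the off-diagonal steps (producing the semicircular part weighted by $T_W^{1/2}\cdot T_W^{1/2}$) plus independent diagonal pairings (producing the Gaussian diagonal weighted by the square root of the local row sum, which, by the law of large numbers in the weight variables, tends to $\E[W^m] \cdot T_W^{1/2}$); this matches the operator-theoretic description given in Theorem~\ref{theorem: Laplacian measure identification} after truncation.

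The concentration step is handled by a parallel Wick computation showing that $\E\big[\Tr(\bfD_{N,g,c})^k\big]^2$ and $\E\big[\big(\Tr(\bfD_{N,g,c})^k\big)^2\big]$ differ by $o(N^2)$, which suffices to conclude via Chebyshev's inequality; the bounded truncation again guarantees that all moments of the entries are uniformly controlled, preventing heavy-tail pathologies. The main obstacle I anticipate is the bookkeeping of Wick contractions mixing $G$- and $Z$-edges: although the decoupling in Lemma~\ref{lemma:decoupling} makes the $G$- and $Z$-families independent (so contractions factor neatly), one still has to enumerate precisely which index identifications between diagonal entries $Y_N(i,i)$ and neighbouring off-diagonal steps contribute at leading order in $N$. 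Once this graph-theoretic enumeration is pinned down, the passage from finite moments to the Lévy--Prokhorov statement proceeds by standard means, completing the proof of Proposition~\ref{main:prop}.
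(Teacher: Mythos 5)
Your proposal is correct and follows essentially the same route as the paper: a moment method with Wick's formula on the mixed $G$/$Z$ contractions, reduction of the spatial kernel to the $\alpha=0$ case (which is exactly the content of the paper's Fact~\ref{fact:Laplacian-claim410} borrowed from \cite{CHMS2025}), a parallel Wick computation for the variance, and a Carleman-condition check enabled by the truncation $W_i^m\le m$. The combinatorial bookkeeping you flag as the remaining obstacle is precisely what the paper resolves via the partition graphs $G_{\tilde\pi}$ (trees with doubled edges plus attached leaves for the diagonal blocks), leading to the limiting moments $\sum_{\pi\in NC_2(M(k))}\mathcal{E}(\tilde\pi)\,t(\tilde G_{\tilde\pi},W^m)$.
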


We now use Proposition \ref{main:prop} and tools from Appendix \ref{appendix:prel_lemmas} and Section \ref{section:Pre-moment method} to prove Theorem \ref{theorem:Laplacian limiting measure}. 

\begin{proof}[Proof of Theorem \ref{theorem:Laplacian limiting measure}]
   Combining Proposition \ref{main:prop} with Lemma \ref{lemma:decoupling} gives us that 
   \begin{align}\label{eq:Laplacian proof step 1} 
    \lim_{N\to \infty}\ESD(\bfD_{N,g,m})=\nu_{\tau,m}\qquad\text{ in $\,\prob$--probability}\,.
   \end{align}
   To show the existence of the limit $\nu_\tau :=\lim_{m\to\infty}\nu_{\tau,m}$, we wish to apply Lemma \ref{lemma:slutsky}. Equation \eqref{eq:Laplacian proof step 1} satisfies Condition \ref{item:1ar} of Lemma \ref{lemma:slutsky}. Moreover, Condition \ref{item:2ar} can be easily verified by Lemma \ref{lemma:truncation}. Thus, there exists a unique limit $\nu_\tau$ such that 
   \begin{align}\label{eq:Laplacian proof step 2}
       \lim_{N\to \infty}\ESD(\tilde{\bfD}_{N,g})=\nu_{\tau}\qquad\text{ in $\,\prob$--probability}\,.
   \end{align}
   Combining equation \eqref{eq:Laplacian proof step 2} with Lemma \ref{lemma:removal of wedge 1}, and subsequently with Lemma \ref{lemma:gaussianisation without 1-} and Lemma \ref{lemma:mean zero gaussianisation} yields 
   \begin{align}\label{eq:Laplacian proof step 3}
       \lim_{N\to \infty}\ESD(\bar{\bfD}_{N})=\nu_{\tau}\qquad\text{ in $\,\prob$--probability}\,.
   \end{align}
   We now wish to show that the limiting empirical spectral distribution for $\bfD_N^\circ$ is $\nu_\tau$ in $\prob$--probability. To this end, note that for any $h$ satisfying conditions of Lemma \ref{lemma:Laplacian Gaussianisation}, and $H_z$ as in subsection \ref{subsec:Gaussianisation}, we have by the means of Lemma \ref{lemma:Laplacian Gaussianisation} that 
   $$
\lim_{N\to\infty}h\left(\Re(H_z(\bfD_N^\circ))\right)= h\left(\Re\St_{\nu_\tau}(z)\right).
   $$
   The above characterises convergence in law. However, since $\nu_\tau$ is a deterministic measure, the above convergence holds in $\prob$--probability, and analogously for $\Im(H_z(\bfD_N^\circ))$. This gives us that $$
   \lim_{N\to\infty}\St_{\ESD(\bfD_N^\circ)}(z) = \St_{\nu_\tau}(z) \qquad\text{ in $\,\prob$--probability}\,.$$
   Since convergence of Stieltjes transforms characterises weak convergence, we obtain 
   $$
   \lim_{N\to\infty}\ESD(\bfD_N^\circ) = \nu_\tau \qquad\text{ in $\,\prob$--probability}\, ,$$ completing the proof. 
\end{proof}

We now provide the proof of Proposition \ref{main:prop}. We borrow the main ideas of \citet[Section 5.2.1, 5.2.2]{Chatterjee:Hazra}, and adapt them to our setting using the results of \citet[Section 4.4]{CHMS2025}. 

\begin{proof}[Proof of Proposition \ref{main:prop}]\label{proof:proof of prop}
    The proof of the moment method is valid when the weights are bounded, and so for notational convenience, in this proof we will drop the dependence on $m$ from $\{r^m_{ij}\}_{i,j\in\Ver_N}$. Thus, for the remainder of the proof, we have that $$
    r_{ij} = \frac{W_i^mW_j^m}{\|i-j\|^{\alpha}}.
    $$
    We apply the method of moments to show the convergence to the law $\nu_{\tau,m}$. The proof is split up into three parts as follows:
    \begin{enumerate}
        \item For any $k\geq 1$, we compute the expected moment $$\E\int_1^{\infty} x^k \ESD(\bfD_{N,g,c})(\dd{x}),$$
        and show that as $N\to\infty$, the above quantity converges to a value $0<M_k<\infty$ for $k$ even, and 0 otherwise. 
        \item We then show concentration by proving (under the law $\prob$) that $$\Var \left( \int_1^{\infty} x^k \ESD(\bfD_{N,g,c})(\dd{x})   \right) \to 0 \quad \text{as $N\to\infty$}.$$
        \item Lastly, we show that the sequence $\{M_k\}_{k\geq 1}$ uniquely determises a limiting measure. 
    \end{enumerate}

  \paragraph{Step 1.}  We begin by considering that $k$ is even. By using the expansion for $(a+b)^k$, it is easy to see that $$
    \E\int_1^{\infty} x^k \ESD(\bfD_{N,g,c})(\dd{x}) = \frac{1}{N}\E\left[\Tr\left(\bfD_{N,g,c}^k\right)\right] = \frac{1}{N}\sum_{\substack{m_1,\ldots,m_k,\\n_1,\ldots,n_k}}\E\left[\Tr\left(\A_{N,g,m}^{m_1}Y_N^{n_1}\ldots\A_{N,g,m}^{m_k}Y_N^{n_k}\right) \right],
    $$
    where $\A_{N,g,m}$ and $Y_N$ are as in Lemma \ref{lemma:decoupling}, and $\{m_i, n_i\}_{1\leq i \leq k}$ are such that $\sum_{i=1}^k m_i+n_i = k.$ 
    
    \noindent Let $M(p)$ and $N(p)$ be defined as 
    $$ M(p) = \sum_{i=1}^p m_i, \quad N(p)=\sum_{i=1}^p n_i$$ for any $1\leq p\leq k$. To expand the trace term, we sum over all $\mathbf{i} = (i_1,\ldots,i_{M(k)+N(k)+1})\in [N]^{M(k)+N(k)+1}$, where $[p]:=\{1,2,\ldots,p\}$, and we identify $i_{M(k)+N(k)+1} \equiv i_1$. Then, from \citet[Eq. 5.2.2]{Chatterjee:Hazra}, we have 
    \begin{align}\label{eq:Laplacian polynomial expansion}
        &\frac{1}{N}\Tr\left( \A_{N,g,m}^{m_1}Y_N^{n_1}\ldots\A_{N,g,m}^{m_k}Y_N^{n_k}\right) \nonumber \\
        &= \frac{1}{N}\sum_{i_1,\ldots,i_{M(k)}=1}^N\prod_{j=1}^{M(k)} G_{i_j \wedge i_{j+1}, i_j\vee i_{j+1}}\prod_{j=1}^{M(k)}\frac{\sqrt{r_{i_ji_{j+1}}}}{\sqrt{c_N}} \prod_{j=1}^k \left( \frac{1}{c_N}\sum_{t=1}^N r_{i_{1+M(j)}t} \right)^{\frac{n_j}{2}}\prod_{j=1}^k Z_{i_{1+M(j)}}^{n_j} \, ,
    \end{align}
    where also in \eqref{eq:Laplacian polynomial expansion} we identify $i_{M(k)+1}\equiv i_1$. 
    Taking expectation in \eqref{eq:Laplacian polynomial expansion}, we have that 
    \begin{align}\label{eq:Laplacian moment expectation expansion}
    &\E\left[\tr\left( \A_{N,g,m}^{m_1}Y_N^{n_1}\ldots\A_{N,g,m}^{m_k}Y_N^{n_k}\right)\right] \nonumber \\
    &= \frac{1}{N}\sum_{i_1,\ldots,i_{M(k)}} \E\left[ \prod_{j=1}^{M(k)} G_{i_j \wedge i_{j+1}, i_j\vee i_{j+1}}\right]\E\left[\prod_{j=1}^{M(k)}\frac{\sqrt{r_{i_ji_{j+1}}}}{\sqrt{c_N}} \prod_{j=1}^k \left( \frac{1}{c_N}\sum_{t=1}^N r_{i_{1+M(j)}t} \right)^{\frac{n_j}{2}}\right]\E\left[\prod_{j=1}^k Z_{i_{1+M(j)}}^{n_j}     \right].
    \end{align}
     It is well known that the expectation over a product of independent Gaussian random variables is simplified using the Wick's formula (see Lemma \ref{lemma:Wick}). In particular, if one were to partition the tuple $\{1,\ldots,K\}$ for some nonnegative integer $K$, the contributing partitions are typically non-crossing pair partitions (\cite{Nica:Speicher}). 
    
    We now introduce some notation from \cite{CHMS2025}. For any fixed non-negative even integer $K$, let $\mathcal{P}_2(K)$ and $NC_2(K)$ be the set of all pair partitions and the set of all non-crossing pair partitions of $[K]$ respectively. Let $\gamma = (1,\ldots, K)\in S_K$ be the right-shift permutation (modulo $K$), and for any $\pi$ which is a pair-partition, we identify it as a permutation of $[K]$, and read $\gamma\pi$ as a composition of permutations. Further, for any $\pi\in \mathcal{P}_2(K)$, let $\mathrm{Cat}_{\pi}$ denote the set $$
    \mathrm{Cat}_\pi := \mathrm{Cat}_\pi(K,N) = \{\mathbf{i}\in [N]^K : i_r = i_{\gamma\pi(r)} \text{ for all $r\in [K]$}\}. 
    $$
    Let $C(K,N)= \mathrm{Cat}_\pi^c$, the complement of $\mathrm{Cat}_{\pi}$, wherein we have $i_r=i_{\pi(r)}$ for any $r$. By Wick's formula for the Gaussian terms $\{G_{i,j}\}$, since the the sum over tuples $\mathbf{i}$ would be reduced to the sum over pair partitions $\pi\in\mathcal{P}_2(K)$ and the associated tuples $\mathbf{i}\in \mathrm{Cat}_\pi\cup C(K,N)$, we can write \begin{align}\label{eq:Laplacian sum split}
    \sum_{\mathbf{i}\in [K]^N}=\sum_{\pi \in \mathcal{P}_2(K)}\sum_{\mathbf{i}\in C(K,N)} + \sum_{\pi\in NC_2(K)}\sum_{\mathbf{i}\in\mathrm{Cat}_\pi} + \sum_{\pi\in \mathcal{P}_2(K)\setminus NC_2(K)}\sum_{\mathbf{i}\in\mathrm{Cat}_\pi}.\end{align}
    To analyse further, we use a key tool in the proof which is the following fact (\citet[Claim 4.10]{CHMS2025}). 
    \begin{fact}\label{fact:Laplacian-claim410}
        Let $K$ be an even nonnegative integer. Then, we have the following to be true: 
        \begin{enumerate}
            \item For any $\pi\in NC_2(K)$, we have 
            $$
            \lim_{N\to\infty}\frac{1}{Nc_N^{K/2}}\sum_{\mathbf{i}\in\mathrm{Cat}_\pi}\prod_{(r,s)\in\pi}\frac{1}{\|i_r - i_{r+1}\|^{\alpha}} = 1\, . 
            $$
            \item For any pair partition $\pi$, if $\mathbf{i}\in C(K,N)$, then, 
            $$
            \lim_{N\to\infty}\frac{1}{Nc_N^{K/2}}\sum_{\mathbf{i}\in C(K,N)}\prod_{(r,s)\in\pi}\frac{1}{\|i_r - i_{r+1}\|^{\alpha}} = 0\, . 
            $$
        \item For a partition $\pi\in \mathcal{P}_2(K)\setminus NC_2(K)$, we have 
        $$
        \lim_{N\to\infty}\frac{1}{Nc_N^{K/2}}\sum_{\mathbf{i}\in \mathrm{Cat}_\pi\cup C(K,N)}\prod_{(r,s)\in\pi}\frac{1}{\|i_r - i_{r+1}\|^{\alpha}} = 0\, . 
        $$
        \end{enumerate}
    \end{fact}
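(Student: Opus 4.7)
The key algebraic input I would exploit is the classical identity $\#(\gamma\pi) = K/2+1-2g(\pi)$ for $\pi$ a pair partition of $[K]$, where $g(\pi)\geq 0$ is the genus and vanishes precisely when $\pi\in NC_2(K)$. Since tuples in $\mathrm{Cat}_\pi$ are by definition constant on cycles of $\gamma\pi$, the sum $\sum_{\mathbf{i}\in\mathrm{Cat}_\pi}$ ranges over exactly $\#(\gamma\pi)$ free indices. Combined with the torus asymptotic $\sum_{k\in\Ver_N,\,k\neq 0}\|k\|^{-\alpha}\sim c_N$ (valid for $\alpha<1$), the scale $Nc_N^{K/2}$ in the denominator is exactly what emerges from $K/2+1$ free indices distributed as one base vertex (contributing $N$) and $K/2$ pair-distances (each contributing $c_N$). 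Every part of the statement will be a direct consequence of comparing this combinatorial dimension count to $K/2+1$.

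For part (1) I would proceed by induction on $K$, with base case $K=2$ immediate from the asymptotic above. For the inductive step one uses that every $\pi\in NC_2(K)$ contains an \emph{inner pair} $(r,r+1)\in\pi$, and the constraint $\mathbf{i}\in \mathrm{Cat}_\pi$ forces $i_r=i_{r+2}$ while leaving $i_{r+1}$ free. Summing over $i_{r+1}$ against the kernel $\|i_r-i_{r+1}\|^{-\alpha}$ extracts a factor $c_N(1+o_N(1))$, and the residual sum is indexed by the non-crossing pair partition $\pi'\in NC_2(K-2)$ obtained by deleting the inner pair. The induction then closes cleanly against the denominator $Nc_N^{K/2}=c_N\cdot Nc_N^{(K-2)/2}$.

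Parts (2) and (3) both reduce to showing that the effective number of free indices is strictly less than $K/2+1$. For (3), the crossing hypothesis forces $g(\pi)\geq 1$, so $\#(\gamma\pi)\leq K/2-1$; one then bounds the kernel by its running product $c_N^{K/2}$ and divides by $Nc_N^{K/2}$ to obtain a contribution of order $N^{-2}\to 0$. For (2), the tuples in $C(K,N)$ satisfy the alternative Gaussian-matching constraint $i_r=i_{\pi(r)}$ (rather than $i_r=i_{\gamma\pi(r)}$), which forces $\mathbf{i}$ to be constant on the cycles of $\pi$ itself together with their $\gamma$-shifts; one verifies combinatorially that the resulting number of orbits is always strictly smaller than $K/2+1$, giving a sum of order $o(Nc_N^{K/2})$.

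The main obstacle I anticipate lies in part (2). The set $C(K,N)$ is really a union of strata indexed by the various ways in which the equality $i_r=i_{\pi(r)}$ can be coupled with (or replace) equalities of the form $i_r=i_{\gamma\pi(r)}$ on other pairs, producing a range of partial coincidence patterns. Showing that each stratum contributes negligibly, \emph{uniformly} across all $\pi\in\mathcal{P}_2(K)$, is the genuine combinatorial content. A secondary technical point is confirming that the asymptotic $\sum_k\|k\|^{-\alpha}\sim c_N$ is robust enough that the nested partial sums arising in the induction of part (1) do not shed a power of $N$ through torus edge effects; this is a routine computation once one handles the $|i-j|\wedge(N-|i-j|)$ convention carefully. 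Since this bookkeeping is already carried out in \cite[Claim 4.10]{CHMS2025}, I would invoke that result as a black box rather than rederive it.
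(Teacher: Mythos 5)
Your proposal matches the paper's treatment: the paper does not prove this fact either, but states it verbatim as a quotation of \citet[Claim 4.10]{CHMS2025}, which is exactly the black box you fall back on at the end. Your accompanying genus/orbit-counting sketch is a fair outline of how that claim is actually proved (with the minor caveat that in part (3) the correct way to exploit $\#(\gamma\pi)\le K/2-1$ is a connectivity argument summing each new free index against one kernel factor, giving order $c_N^{-2}\asymp N^{-2(1-\alpha)}$ rather than $N^{-2}$), so there is nothing substantive to correct.
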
 
    Let $\tilde{\pi}:=\gamma\pi$ for any choice of $\pi$. From \citet[Eq. 5.2.5]{chatterjee2005simple}, we have that 
    \begin{align}\label{eq:Laplacian moment beta term}
        \mathcal{E}(\tilde{\pi}) := \E\left[ \prod_{j=1}^k Z_{i_{1+M(j)}}^{n_j}\right] = \prod_{u\in\tilde{\pi}} \E \left[  \prod_{\substack{j\in[k]:\\ 1+M(j)\in u}} Z_{\ell_u}^{n_j} \right] < \infty ,
    \end{align}
    where $u$ is a block in $\tilde{\pi}$ and $\ell_u$ its representative element. Note that this does not depend on the choice of $\mathbf{i}$, and to obtain a nonzero contribution, we must have that for all $u\in\tilde{\pi}$, 
    \begin{align}\label{eq:Laplacian n_j condition}
        \sum_{\substack{j\in[k]: 1+M(j)\in u} } n_j \equiv 0 \quad (\mathrm{mod}\, 2).
    \end{align}
    Observe that $\E\left[ \prod_{j=1}^{M(k)} G_{i_j \wedge i_{j+1},i_j\vee i_{j+1}}\right]$ depends only on $\tilde{\pi}$ and not the choice of $\mathbf{i}$, and as a consequence, we can define 
    \begin{align}\label{eq:Laplacian moment phi term}
        \Phi(\tilde{\pi}) := \E\left[ \prod_{j=1}^{M(k)} G_{i_j \wedge i_{j+1},i_j\vee i_{j+1}}\right]<\infty \, .
    \end{align}
    Next, note that the sum
    \begin{align}\label{eq:Laplacian second r terms}
    \frac{1}{c_N}\sum_{t=1}^N r_{i_{1+M(j)}t} = \bigO_N(1)
    \end{align} by definition of $c_N$ (and the weights are uniformly bounded). Finally, if we look at the terms \begin{align}\label{eq:Laplacian_decaytermC(K,N)}
    \E\left[    \left(  \prod_{j=1}^{M(k)}\frac{r_{i_ji_{j+1}}}{c_N}    \right)^{1/2}    \right]\, ,
    \end{align} we can again bound the weights above by $m$. Recall that Wick's formula on the Gaussian terms imposes the restriction on choices of $\mathbf{i}$. Using these facts, in combination with \eqref{eq:Laplacian moment beta term}, \eqref{eq:Laplacian moment phi term}, and \eqref{eq:Laplacian second r terms}, we have that \eqref{eq:Laplacian moment expectation expansion} gets bounded by  \begin{align}\label{eq:Laplacian moment expectation bound}    
    \eqref{eq:Laplacian moment expectation expansion} \le \frac{C}{Nc_N}\sum_{\pi\in\mathcal{P}_2(M(k))}\sum_{\mathbf{i}\in \mathrm{Cat}_\pi\cup C(M(k),N)} \Phi(\tilde{\pi})\mathcal{E}(\tilde{\pi})\prod_{(r,s)\in\pi}\frac{m^2}{\|i_r - i_{r+1}\|^{\alpha}}\, . \end{align}
    If we split \eqref{eq:Laplacian moment expectation bound} as \eqref{eq:Laplacian sum split}, then using Fact \ref{fact:Laplacian-claim410}, we see that in the cases when $\pi\in\mathcal{P}_2(M(k))$ and $\mathbf{i}\in C(M(k),N)$, and when $\pi\in\mathcal{P}_2(M(k))\setminus NC_2(M(k))$ for all $\mathbf{i}$, the contribution in the limit $N\to\infty$ is 0.

    We are now in the setting where we take $\pi\in NC_2(M(k))$ and $\tilde{\pi}:=\gamma\pi$, and $\mathbf{i}\in \mathrm{Cat}_\pi$. First, note that $\tilde{\pi}$ is a partition of $[M(k)]$. We remark that if $M(k)\equiv 1 \, (\mathrm{mod}\,2)$ then $NC_2(M(k))=\emptyset$, and so, $M(k)$ must be even. 
    
    Next, we focus on analysing the product $\prod_{j=1}^{M(k)}\sqrt{r_{i_ji_{j+1}}^m}$ appearing in \eqref{eq:Laplacian moment expectation expansion}. We wish to show that this depends only on $\pi$, and not on the choice of $\mathbf{i}$. We follow the idea of \cite{CHMS2025}, wherein one constructs a graph associated to a chosen partition $\pi$, and any tuple $\mathbf{i}\in \mathrm{Cat}_\pi$ is equivalent to a tuple $\tilde{\mathbf{i}}$ with as many distinct indices as the number of vertices in the constructed graph. First, note that the coordinates are pairwise distinct (we take $r_{ii}=0$ for all $i$). Next, we construct a preliminary graph from the closed walk $i_1\to i_2\to\ldots i_{M(k)}\to i_1$. Lastly, we collapse vertices and edges that are matched in $\mathrm{Cat}_\pi$, and we denote the resulting graph as $G_{\tilde{\pi}}$, since it does not depend on the choice of $\mathbf{i}$ but rather the choice of $\pi$ itself. The resulting graph $G_{\tilde{\pi}}$ is the graph associated to the partition $\pi$, and we refer the reader to Definition \ref{pap1-partitiongraph} for a formal description. For clarity, consider the following example: 
    
    Let $M(k) = 4$, and let $\pi=\{\{1,2\},\{3,4\}\}$. Then, $\tilde{\pi}=\{\{1,3\},\{2\},\{4\}\}$. For any $\mathbf{i}\in \mathrm{Cat}_\pi$, we see that $i_1=i_3$, and $i_2,i_4$ are independent indices. Now, $G_{\tilde{\pi}}$ is a graph on 3 vertices, which are labelled as $\{\{1,3\}\}$, $\{2\}$ and $\{4\}$, and so its corresponding tuple $\tilde{\mathbf{i}}$ is exactly the same as $\mathbf{i}$.

    We then have, from \citet[Eq. 5.2.12]{Chatterjee:Hazra}, that 
    \begin{align}
        \prod_{j=1}^{M(k)}\sqrt{r_{i_ji_{j+1}}^m} = \prod_{e\in E_{\tilde{\pi}}} r_e^{t_e/2},
    \end{align}
    where $E_{\tilde{\pi}}$ is the edge set of $G_{\tilde{\pi}}$ and $t_e$ is the number of times an edge $e$ is traversed in the closed walk on $G_{\tilde{\pi}}$. Also observe
    $$
    \Phi(\tilde{\pi}) = \E\left[  \prod_{e\in E_{\tilde{\pi}}} G_e^{t_e}  \right].$$
    Consequently, we must have that $t_e$ to be even for all $e$, since the Gaussian terms are independent and mean 0. We claim that $t_e=2$ for all $e\in E_{\tilde{\pi}}$. Indeed, if for all $e$, $t_e\geq 2$ with at least one $e'$ such that $t_{e'}>2$, then, $\sum_{e\in E_{\tilde{\pi}}}t_e >2|E_{\tilde{\pi}}|$. Since $G_{\tilde{\pi}}$ is connected, $|E_{\tilde{\pi}}| \geq |V_{\tilde{\pi}}| -1 = M(k)/2$, where $V_{\tilde{\pi}}$ is the vertex set. Thus, $\sum_{e \in E_{\tilde{\pi}}} t_e> M(k)$. But, $\sum_e t_e = M(k)$, gives a contradiction. We conclude that $t_e=2$ for all $e\in E_{\tilde{\pi}}$. 

    \noindent
    A similar contradiction arises when we assume that there exists a self-loop in $G_{\tilde{\pi}}$. Thus $G_{\tilde{\pi}}$ is a tree on $\frac{M(k)}{2} + 1$ vertices with each edge traversed twice in the closed walk. As a consequence, every Gaussian term in $\Phi(\tilde{\pi})$ appears exactly twice, and so, $\Phi(\tilde{\pi}) =1$. 

\

\noindent
    Let $b_s$ be the $s^{\text{th}}$ block of $\tilde{\pi}$ and let $\ell_s$ its representative element. Define $$
    \gamma_s := \#\left\{    1\leq j\le k: 1+M(j)\in b_s       \right\},$$
    and
    $$
    \{s_1,s_2,\ldots,s_{\gamma_s}\} = \left\{   1\le j\le k: 1+M(j)\in b_s\right\}.
    $$
    
    \noindent We then have 
    \begin{align}
      \prod_{j=1}^k\left(  \frac{1}{c_N}\sum_{t=1}^N r_{i_{1+M(j)t}}     \right)^{\frac{n_j}{2}} = \prod_{s=1}^{\frac{M(k)}{2}+1}\left( \frac{1}{c_N}\sum_{t=1}^N r_{\ell_st}    \right)^{\sum_{j=1}^{\gamma_s}n_{s_j}/2}.  
    \end{align}
    Note that $$
    \sum_{j=1}^{\gamma_s}n_{s_j} = \sum_{j\in[k]:1+M(j)\in b_s} n_j.
    $$
    Let us define $\tilde{n}_s:=\sum_{j=1}^{\gamma_s}n_{s_j}/2$. Then, \begin{align}\label{eq:tilde n sum}
    \sum_{s:b_s\in\tilde{\pi}} \tilde{n}_s =\frac{N(k)}{2}.
    \end{align}
    Using \citet[Eq. 5.2.16]{Chatterjee:Hazra}, we obtain
    \begin{align}\label{eq:Laplacian variance term expansion}
        &\frac{1}{N c_N^{\frac{M(k)}{2}}}\sum_{\mathbf{i}\in \mathrm{Cat}_\pi} \prod_{j=1}^{M(k)}\sqrt{r_{i_ji_{j+1}}}\prod_{j=1}^k\left(\frac{1}{c_N}\sum_{t=1}^N r_{i_{1+M(j)}t}\right)^{\frac{n_j}{2}} \nonumber\\
        &=\frac{1}{Nc_N^{\frac{M(k)+N(k)}{2}}}\sum_{\substack{\ell_1\neq \ldots \neq \ell_{M(k)/2 +1},\\ p_{(s,1)},\ldots,p_{(s,\tilde{n}_s)}:s\in\left[ \frac{M(k)}{2}+1\right]}} \prod_{(u,v)\in E_{\tilde{\pi}}} r_{\ell_u\ell_v}\prod_{s=1}^{\frac{M(k)}{2}+1}\prod_{t=1}^{\tilde{n}_s}r_{\ell_sp_{(s,t)}},
    \end{align} 
    where for any two blocks $b_{s_1}$ and $b_{s_2}$, $\{p(s_1,1),p(s_1,2),\ldots\}$ and $\{p(s_2,1), p(s_2,2),\ldots\}$ are non-intersecting sets of indices $\{p_1,p_2,\ldots,p_{\tilde{n}_{s_1}}\}$ and $\{p_1',p_2',\ldots,p_{\tilde{n}_{s_2}}'\}$. Note that for $(u,v)\in E_{\tilde{\pi}}$, $r_{\ell_u\ell_v} = r_{uv}$ as before, but we rewrite in terms of representative elements to indicate common factors with the terms $r_{\ell_sp_{(s,t)}}$. Taking expectation in \eqref{eq:Laplacian variance term expansion} gives us 
    \begin{align}\label{eq:Laplacian varterms expectation}
        &\E[\eqref{eq:Laplacian variance term expansion}] \nonumber\\
        &= \frac{1}{Nc_N^{\frac{M(k)+N(k)}{2}}}\sum_{\ell_1\neq\ldots\neq \ell_{\frac{M(k)}{2}+1}}\E\left[ \prod_{(u,v)\in E_{\tilde{\pi}}}\frac{W_{\ell_u}^mW_{\ell_v}^m}{\|\ell_u -\ell_v\|^{\alpha}}\sum_{\substack{p_{(s,1)},\ldots,p_{(s,\tilde{n}_s)}:\\
        s\in\left[\frac{M(k)}{2}+1\right]}} \prod_{s=1}^{\frac{M(k)}{2}+1}\prod_{t=1}^{\tilde{n}_s} \frac{W_{\ell_s}^m W_{p_{(s,t)}}^m}{\|\ell_s - p_{(s,t)}\|^{\alpha}}       \right] \, .
    \end{align}
    The vertex set $V_{\tilde{\pi}}$ of the graph $G_{\tilde{\pi}}$ yields $M(k)/2 + 1$ distinct indices, due to the tree structure. Using Fact \ref{fact:Laplacian-claim410}, we see that the factor of $$
    \sum_{\ell_1,\ldots,\ell_{\frac{M(k)}{2}+1}}\prod_{(u,v)\in E_{\tilde{\pi}}} \frac{1}{\|\ell_u - \ell_v\|^{\alpha}}
    $$ is of the order of $\bigO_N\left(c_N^{\frac{M(k)}{2}}\right)$ since the weights are uniformly bounded in the range $[1,m]$. For the second summand in \eqref{eq:Laplacian varterms expectation}, the index $\ell_s$ already appears in the graph $G_{\tilde{\pi}}$, and for any $s$, we have $\tilde{n}_s$ many distinct indices from the sequence $\{p_{s,t}\}$, and summing over all $s$ yields $N(k)/2$ many distinct indices due to \eqref{eq:tilde n sum}. The second summation is therefore of the order of $\bigO_N\left(c_N^{\frac{N(k)}{2}}\right)$. 
    
      We claim that as $N\to\infty$, \eqref{eq:Laplacian varterms expectation} converges to the limit 
    $$
    \E\left[\prod_{(u,v)\in E_{\tilde{\pi}}} W_{\ell_u}^m W_{\ell_v}^m \prod_{s=1}^{\frac{M(k)}{2}+1}\prod_{t=1}^{\tilde{n}_s} W_{\ell_s}^m W_{p_{(s,t)}} \right]\, . 
    $$
    First, note that the weights are bounded, and so, \eqref{eq:Laplacian varterms expectation} is bounded above and below. Next, we note that with the scaling of $Nc_N^{M(k)/2}$, we have $$
    \lim_{N\to\infty}\frac{1}{Nc_N^{M(k)/2}}\sum_{\ell_1 \neq \ldots \neq \ell_{\frac{M(k)}{2}+1}} \E\left[    \prod_{(u,v)\in E_{\tilde{\pi}}} \frac{W^m_{\ell_u}W^m_{\ell_v}}{\|\ell_u - \ell_v\|^{\alpha}} \right] = \E\left[   \prod_{(u,v)\in E_{\tilde{\pi}}} W^m_{\ell_u}W^m_{\ell_v} \right]\, ,
    $$ 
    which is moments of the adjacency matrix of the model as in \cite{CHMS2025}. Thus, combinatorially, the first summand in \eqref{eq:Laplacian varterms expectation} corresponds with the graph $G_{\tilde{\pi}}$, as defined in Definition \ref{chap5:eq:def_c}. Now, consider a modification of the graph as follows: For each vertex $s$ in $G_{\tilde{\pi}}$, attach $\tilde{n}_s$ many independent leaves, and call the new graph $\tilde{G}_{\tilde{\pi}}$. We refer to \cite{Chatterjee:Hazra} for a detailed description, and Figure \ref{pap3fig:Laplacian moment graph} for a visual representation. 

    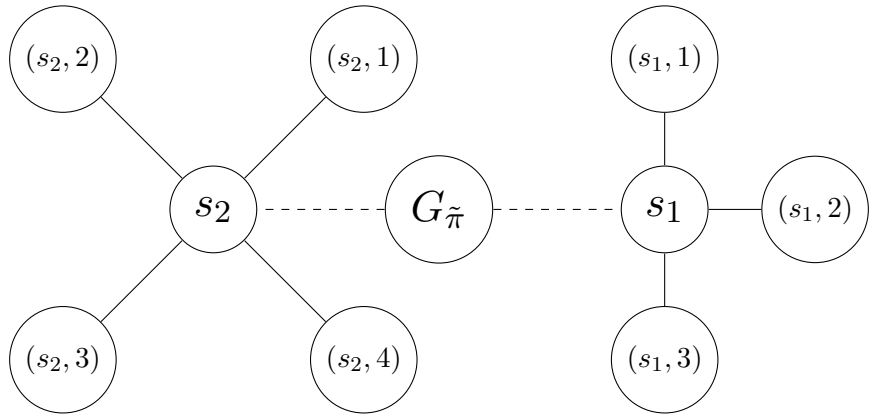
\begin{figure}[h]
    \centering
    \begin{center}
    \begin{tikzpicture}[scale=2] 
    \node[shape=circle,draw=black,scale=1.5] (s1) at (2.5,4) {$s_1$};
    \node[shape=circle,draw=black,scale=1.5] (G) at (1,4) {$G_{\tilde{\pi}}$};
    \node[shape=circle,draw=black,scale=1] (s11) at (2.5,5) {$(s_1,1)$};
    \node[shape=circle,draw=black,scale=1] (s12) at (3.5,4) {$(s_1,2)$};
    \node[shape=circle,draw=black,scale=1] (s13) at (2.5,3) {$(s_1,3)$};
    \node[shape=circle,draw=black,scale=1.5] (s2) at (-0.5,4) {$s_2$};
    \node[shape=circle,draw=black,scale=1] (s21) at (0.5,5) {$(s_2,1)$};
    \node[shape=circle,draw=black,scale=1] (s22) at (-1.5,5) {$(s_2,2)$};
    \node[shape=circle,draw=black,scale=1] (s23) at (-1.5,3) {$(s_2,3)$};
    \node[shape=circle,draw=black,scale=1] (s24) at (0.5,3) {$(s_2,4)$};
    \path [dashed] (G) edge node[left] {} (s1);
    \path [dashed] (G) edge node[left] {} (s2);
    \path [-] (s1) edge node[left] {} (s11);
    \path [-] (s1) edge node[left] {} (s12);
    \path [-] (s1) edge node[left] {} (s13);
    \path [-] (s2) edge node[left] {} (s21);
    \path [-] (s2) edge node[left] {} (s22);
    \path [-] (s2) edge node[left] {} (s23);
    \path [-] (s2) edge node[left] {} (s24);
    % \path [-] (1) edge node[left] {} (2);
    % \path [-] (2) edge node[left] {} (3); 
    % \path [dashed] (A) edge node[left] {} (2);
    % \path [dashed] (1) edge node[left] {} (B);
    % \path [dashed] (3) edge node[left] {} (C);
\end{tikzpicture}
 \end{center}
    \caption{Modifying the graph $G_{\tilde{\pi}}$ to construct $\tilde{G}_{\tilde{\pi}}$. Here, we pick two vertices $s_1,s_2\in V_{\tilde{\pi}}$, with $\tilde{n}_{s_1}=3, \tilde{n}_{s_2}=4$.}
    \label{pap3fig:Laplacian moment graph}
\end{figure}
    
    The second summand over the sequence $\{p_{s,t}\}$ for each $s$ corresponds to the added leaves, since the only common index with the original graph is the index $\ell_s$ for each $s$. Keeping the index $\ell_s$ fixed (since it is summed out in the first summand involving the indices $\ell_1\neq \ldots \neq \ell_{\frac{M(k)}{2}+1}$) , we see that with the scaling $c_N^{N(k)/2}$ we have $$
    \lim_{N\to\infty}\frac{1}{c_N^{N(k)/2}} \E\left[\left. \sum_{p_{(s,1)},\ldots,p_{(s,\tilde{n}_s})} \prod_{s=1}^{\frac{M(k)}{2}+1}\prod_{t=1}^{\tilde{n}_s} \frac{W^m_{\ell_s}W^m_{p_{(s,t)}}}{\|\ell_s - p_{(s,t)}\|^{\alpha}}\right| W^m_{\ell_s} \right] = \E\left[ \left. \prod_{s=1}^{\frac{M(k)}{2}+1}\prod_{t=1}^{\tilde{n}_s} W^m_{\ell_s}W^m_{p_{(s,t)}}\right| W^m_{\ell_s}\right]\, . 
    $$
    Due to the compact support of the weights, it is now easy to conclude that 
    \begin{align}\label{eq:Laplacian moments homomorphism density}
         \lim_{N\to\infty}\eqref{eq:Laplacian varterms expectation} = \E\left[\prod_{(u,v)\in E_{\tilde{\pi}}} W_{\ell_u}^m W_{\ell_v}^m \prod_{s=1}^{\frac{M(k)}{2}+1}\prod_{t=1}^{\tilde{n}_s} W_{\ell_s}^m W_{p_{(s,t)}} \right] =: t(\tilde{G}_{\tilde{\pi}},W^m)
    \end{align}
    where $W^m=(W_1^m,W_2^m,\ldots)$ and $\tilde{G}_{\tilde{\pi}}$ is the modified graph as described above and illustrated in Figure \ref{pap3fig:Laplacian moment graph}.

    We can therefore conclude that for all even $k$,
    \begin{align}\label{eq:Laplacian expected moments}
        \lim_{N\to\infty}\frac{1}{N}\E\left[\tr(\bfD_{N,g,c}^k)\right] = \sum_{\substack{m_1,\ldots,m_k,\\n_1,\ldots,n_k}} \sum_{\pi\in NC_2(M(k))} \mathcal{E}(\tilde{\pi})t(\tilde{G}_{\tilde{\pi}},W^m) \, .
    \end{align}
    
    \
    
    Now, consider the case when $k$ is odd. Due to \eqref{eq:Laplacian n_j condition}, we have that $M(k)$ must be odd. Thus, $\pi$ cannot be a pair partition, and in particular, $\pi\not\in NC(M(k))$. Consider the term $\Phi(\tilde{\pi})$ in \eqref{eq:Laplacian moment phi term}, and notice that by Wick's formula, this term is identically 0 if $M(k)$ is odd. Since the other expectations in \eqref{eq:Laplacian moment expectation expansion} are of order $\bigO_N(1)$, we conclude that the odd moments are 0 in expectation. 

    \

   \paragraph{Step 2.} We now wish to show concentration of the moments. Define 
    $$
    P(\mathbf{i}) := \E\left[\prod_{j=1}^{M(k)} G_{i_j \wedge i_{j+1}, i_j\vee i_{j+1}}\prod_{j=1}^{M(k)}\frac{\sqrt{r_{i_ji_{j+1}}}}{\sqrt{c_N}} \prod_{j=1}^k \left( \frac{1}{c_N}\sum_{t=1}^N r_{i_{1+M(j)}t} \right)^{\frac{n_j}{2}}\prod_{j=1}^k Z_{i_{1+M(j)}}^{n_j}\right] \, ,
    $$
    and 
    \begin{align*}
    P(\mathbf{i},\mathbf{i}') &:= \E\left[\prod_{j=1}^{M(k)} G_{i_j \wedge i_{j+1}, i_j\vee i_{j+1}}\prod_{j=1}^{M(k)}\frac{\sqrt{r_{i_ji_{j+1}}}}{\sqrt{c_N}} \prod_{j=1}^k \left( \frac{1}{c_N}\sum_{t=1}^N r_{i_{1+M(j)}t} \right)^{\frac{n_j}{2}}\prod_{j=1}^k Z_{i_{1+M(j)}}^{n_j}\right. \\
    \times& \left.\prod_{j=1}^{M(k)} G_{i'_j \wedge i'_{j+1}, i'_j\vee i'_{j+1}}\prod_{j=1}^{M(k)}\frac{\sqrt{r_{i'_ji'_{j+1}}}}{\sqrt{c_N}} \prod_{j=1}^k \left( \frac{1}{c_N}\sum_{t=1}^N r_{i'_{1+M(j)}t} \right)^{\frac{n_j}{2}}\prod_{j=1}^k Z_{i'_{1+M(j)}}^{n_j}\right] \, .
    \end{align*}
    Then, 
    \begin{align}\label{eq:Laplacian moment variance step 1}
    \Var\left(\int_{\mathbb{R}}x^{k}\ESD(\bfD_{N,g,c})(\dd{x})\right) = \frac{1}{N^2}\sum_{\substack{m_1,\ldots,m_k,\\n_1,\ldots,n_k}}\sum_{\mathbf{i},\mathbf{i}':[M(k)]\to [N]} \left[P(\mathbf{i},\mathbf{i}') - P(\mathbf{i})P(\mathbf{i}')\right]\, ,
    \end{align}
    and  we would like to show $\eqref{eq:Laplacian moment variance step 1}\to 0$. 
    If $\mathbf{i}$ and $\mathbf{i}'$ have no common indices, then $P(\mathbf{i},\mathbf{i}')=P(\mathbf{i})P(\mathbf{i}')$ by independence. If there is \emph{exactly} one common index, say $i_1=i'_1$, then by independence of Gaussian terms, the factors $\E[G_{i_1,i_2}]$ and $\E[G_{i_1,i_2'}]$ would pull out, causing \eqref{eq:Laplacian moment variance step 1} to be identically 0. Thus, we have \emph{at least} one matching of the form $(i_1,i_2)=(i_1',i_2')$. 

    Let us begin by taking $k$ to be even. Consider exactly one matching, which we take to be $(i_1,i_2)=(i_1',i_2')$ without loss of generality. Let $\pi,\pi'$ be partitions of $\{1,2,\ldots,M(k)\},\{1',2',\ldots,M(k)'\}$ respectively. Let $\mathop{\sum\nolimits^{(1)}}$ denote the sum over index sets $\mathbf{i},\mathbf{i}'$ with exactly one matching. Then, we have by an extension of the previous argument
    \begin{align}\label{eq:Laplacian moment variance bound}
        \frac{1}{N^2}\mathop{\sum\nolimits^{(1)}}_{\mathbf{i},\mathbf{i}':[M(k)]\to[N]}P(\mathbf{i},\mathbf{i}') \leq \frac{1}{N^2c_N^{M(k)}}\sum_{\pi,\pi'}\Phi(\tilde{\pi})\mathcal{E}(\tilde{\pi})\Phi(\tilde{\pi}')\mathcal{E}(\tilde{\pi}')\sum_{\mathbf{i},\mathbf{i}'}\E\left[ r_{i_1i_2}\prod_{j=2}^{M(k)}\sqrt{r_{i_ji_{j+1}}}\prod_{j=2}^{M(k)}\sqrt{r_{i_j'i_{j+1}'}} \right]\, .
    \end{align}
    Expanding the expression for $r_{ij}$ and using the fact that $W_i^m\leq m$ gives us that
    \begin{align}\label{eq:Laplacian moment variance second bound}
        \eqref{eq:Laplacian moment variance bound} \leq \frac{m^{2M(k)}}{N^2c_N^{M(k)}}\sum_{\pi}\Phi(\tilde{\pi})\mathcal{E}(\tilde{\pi})\Phi(\tilde{\pi}')\mathcal{E}(\tilde{\pi}')\sum_{\mathbf{i,i}'}\frac{1}{\|i_1-i_2\|^{\alpha}}\prod_{j=1}^{M(k)}\frac{1}{\|i_j - i_{j+1}\|^{\alpha/2}}\frac{1}{\|i_j'-i_{j+1}'\|^{\alpha/2}}\, .
    \end{align}
We are now precisely in the setting of \cite{CHMS2025}, and in particular, following the ideas from \citet[p24]{CHMS2025} and using Fact \ref{fact:Laplacian-claim410}, we obtain that the right-hand side of \eqref{eq:Laplacian moment variance second bound} is of order $\bigO_N(c_N^{-1})$. For $t$ matchings in $\mathbf{i,i}'$, the order is $\bigO(c_N^{-t})$, giving us that \eqref{eq:Laplacian variance term expansion} is of order $\bigO(c_N^{-1})$ when $k$ is even. 

The argument for the case where $k$ is odd is similar. Since the optimal order is achieved when we take $\mathbf{i}\setminus\{i_1,i_2\}\in\mathrm{Cat}_{\pi}$ and $\mathbf{i}'\setminus\{i_1',i_2'\}\in\mathrm{Cat}_{\pi'}$, with $\pi,\pi'\in NC_2(M(k))$, one cannot construct these partitions with $k$ being odd with the restriction from \eqref{eq:Laplacian n_j condition} imposing that $M(k)$ must be odd. Consequently, we have convergence in $\prob$--probability of the moments of $\ESD(\bfD_{N,g,c})$. Thus, we conclude that $$
\lim_{N\to\infty}\tr(\bfD_{N,g,c}^k) = M_k \quad \text{in $\prob$--probability}\, ,
$$
where \begin{align}\label{eq:Laplacian moment expression}
    M_k = \begin{cases}
        \sum_{\mathcal{M}(k)}\sum_{\pi\in NC_2(M(k))} t(\tilde{G}_{\tilde{\pi}},W^m)\mathcal{E}(\tilde{\pi}) \,, &\text{$k$ even},\\
        0 \,, &\text{$k$ odd}\,,
    \end{cases}
\end{align}
where $\mathcal{M}(k)$ is the multiset of all numbers $(m_1,\ldots,m_k,n_1,\ldots,n_k)$ that appear in the expansion $(a+b)^k$ for two non-commutative variables $a$ and $b$. 

\

\paragraph{Step 3.} We are now left to show that these moments uniquely determine a limiting measure. This follows from \citet[Section 5.2.2]{Chatterjee:Hazra}, but we show the bounds for the sake of completeness. 

First, from \citet[Section 5.2.2]{Chatterjee:Hazra}, we have that $\mathcal{E}(\tilde{\pi})\le 2^k k!$. Next, observe from \eqref{eq:Laplacian moments homomorphism density} that $|t(\tilde{G}_{\tilde{\pi}},W^m) | \le (m^2)^{\frac{k}{2}} = m^2$, since $W_i\le m$ for all $i$ and $\tilde{G}_{\tilde{\pi}}$ is a graph on $\frac{k}{2}+1$ vertices with $\frac{k}{2}$ edges. Lastly, $|NC_2(M(k)|\le |NC_2(k)| = C_k$, where $C_k$ is the $k^{\text{th}}$ Catalan number, and moreover, $|\mathcal{M}(k)| \le 2^k$. Combining these, we have 
$$
\beta_k := |M_k| \leq 2^k . C_k. m^k.2^k k! = (4m)^kC_kk!\,.
$$
Using Sterling's approximation, we have 
$$
\frac{1}{k}\beta_k^{\frac{1}{k}} \le \frac{4m}{(k+1)^{\frac{1}{k}}}.\frac{4\e^{-\left(  1 + \frac{1}{k}    \right)}}{\pi^{\frac{1}{k}}}\, ,
$$
where $\pi$ here is now the usual constant, and subsequently, we have \begin{align}\label{eq:Laplacian carleman}
\limsup_{k\to\infty}\frac{1}{2k}\beta_{2k}^{\frac{1}{2k}} < \infty \, .
\end{align}
 Equation \eqref{eq:Laplacian carleman} is a well known criteria to show that the moments uniquely determine the limiting measure (see \citet[Theorem 1]{Lin-Carleman}).
This completes the argument. 
\end{proof} 

\section{Identification of the limit: Proof of Theorem \ref{theorem: Laplacian measure identification}}
\label{sec:final proof Laplacian}
\subsection{The case $\alpha=0$}\label{subsec:Laplacian alpha=0} In Section \ref{sec:moment method Laplacian}, we show the existence of a unique limiting measure $\nu_\tau$ such that $$
\lim_{N\to\infty}\ESD(\bfD_N^{\circ})=\nu_\tau\qquad \text{in $\prob$--probability}\,.
$$
We have also shown that $\nu_\tau$ is the limiting measure for the $\ESD$ of the Laplacian matrix $\widehat{\bfD}_{N,g}$. 
In particular, through the proof of Proposition \ref{main:prop}, we show that the limit $\nu_{\tau,m}$ is independent of the choice of $\alpha$, and consequently, $\nu_\tau$ is $\alpha$--independent. We then use the idea of substituting $\alpha=0$ from \citet[Section 6]{CHMS2025} in the matrix $\widehat{\bfD}_{N,g}$, to obtain the Laplacian matrix $\bfD_{N,g}^\circ$, which corresponds to the adjacency matrix $\A_{N,g}^\circ$ with entries given by 
\begin{align*}
    \A_{N,g}^{\circ}(i,j) = \begin{cases}
        \frac{\sqrt{W_iW_j}}{\sqrt{N}}G_{i\wedge j,i\vee j}, \, &i\neq j\\
        0, \, &i=j.
    \end{cases}
\end{align*}
Then, $\lim_{N\to\infty}\ESD(\bfD_{N,g}^\circ) = \nu_\tau$ in $\prob$--probability. Recall that for all $1\le i\le N$, $W_i^m := W_i\one_{W_i\leq m}$ for any $m\geq 1$. We can now apply Lemmas \ref{lemma:truncation} and \ref{lemma:decoupling} to contruct a matrix $\bfD_{N,g,c}^\circ = \A_{N,g,m}^\circ + Y_N^\circ$ such that $$
\limsup_{m\to\infty}\lim_{N\to\infty}\prob\left(  d_L(\ESD(\bfD_{N,g}^\circ), \ESD(\bfD_{N,g,c}^{\circ}) )>\delta    \right) =0\, ,
$$ where 
\begin{align*}
    \A_{N,g,m}^{\circ}(i,j) = \begin{cases}
        \frac{\sqrt{W_i^m W_j^m}}{\sqrt{N}}G_{i\wedge j,i\vee j}, \, &i\neq j\\
        0, \, &i=j,
    \end{cases}
\end{align*}
and $Y_N^\circ$ is a diagonal matrix with entries $$
Y_N^\circ(i,i) = Z_i\sqrt{\frac{\sum_{k\neq i}W_i^m W_k^m}{N}}\, .
$$
By Proposition \ref{main:prop}, we have that $\lim_{N\to\infty}\ESD(\bfD_{N,g,c}^\circ) = \nu_{\tau,m}$ in $\prob$--probability. Thus, we begin by identifying $\nu_{\tau,m}$. To that end, consider the matrix $\widehat{\bfD}_{N,g,c}^\circ := \A_{N,g,c} + \widehat{Y}_N^\circ$, with $\A_{N,g,c}$ as before, and $\widehat{Y}_N^\circ$ a diagonal matrix with entries 
$$
\widehat{Y}_N^\circ(i,i) = Z_i\sqrt{W_1^m}\sqrt{\mathbf{E}[W_1^m]}\,. 
$$
We now have the following lemma. 
\begin{lemma}\label{lemma:LLN argument}
    Let $\bfD_{N,g,c}^\circ$ and $\widehat{\bfD}_{N,g,c}^\circ$ be as defined above. Then, 
    $$
    \lim_{N\to\infty}\prob\left( d_L(\ESD(\bfD_{N,g,c}^\circ), \ESD(\widehat{\bfD}_{N,g,c}^\circ))>\delta \right) =0\, .
    $$
\end{lemma}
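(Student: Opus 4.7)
The plan is to follow the general scheme employed throughout Section \ref{section:Pre-moment method}: reduce the Lévy-Prokhorov distance to a Frobenius-type norm via the Hoffman-Wielandt bound in Proposition \ref{prop:hoffman-wielandt}, then show the latter vanishes in expectation via a second-moment argument that is essentially a quantitative law of large numbers on the truncated weights.

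First I would observe that $\bfD_{N,g,c}^\circ$ and $\widehat{\bfD}_{N,g,c}^\circ$ share the same off-diagonal part (both come from $\A_{N,g,m}^\circ$), so their difference is the diagonal matrix $Y_N^\circ - \widehat{Y}_N^\circ$ with $i$-th entry
$$
Z_i\sqrt{W_i^m}\bigl(\sqrt{S_{N,i}} - \sqrt{\mu_m}\bigr),
\qquad\text{where}\qquad
S_{N,i} := \tfrac{1}{N}\sum_{k\neq i} W_k^m,\ \mu_m := \mathbf{E}[W_1^m].
$$
By Proposition \ref{prop:hoffman-wielandt}, it then suffices to prove that
$$
\frac{1}{N}\E\Tr\bigl[(Y_N^\circ - \widehat{Y}_N^\circ)^2\bigr]
= \frac{1}{N}\sum_{i=1}^N \E\!\left[Z_i^2\, W_i^m\, \bigl(\sqrt{S_{N,i}} - \sqrt{\mu_m}\bigr)^2\right]
$$
tends to $0$ as $N \to \infty$, after which Markov's inequality delivers the claim.

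The key simplification comes from the Pareto lower bound $W_k \ge 1$ in \eqref{eq:paretolaw}, which forces $W_k^m \ge 1$ and hence both $S_{N,i} \ge (N-1)/N$ and $\mu_m \ge 1$. For $N$ large, both are bounded below by $1/2$, so the identity $|\sqrt{a}-\sqrt{b}| = |a-b|/(\sqrt{a}+\sqrt{b})$ yields $(\sqrt{S_{N,i}} - \sqrt{\mu_m})^2 \le C\,(S_{N,i} - \mu_m)^2$ for an absolute constant $C$. Next I would use independence: $Z_i$ is independent of the weights with $\E[Z_i^2]=1$, and $W_i^m$ is independent of $\{W_k^m\}_{k\neq i}$, so the expectation factors as
$$
\E\!\left[Z_i^2\, W_i^m\, (S_{N,i}-\mu_m)^2\right]
= \E[W_i^m]\cdot \Var(S_{N,i}) \le m \cdot \frac{\Var(W_1^m)}{N} \le \frac{m^3}{N}.
$$
Summing over $i$ and dividing by $N$ gives a bound of order $O(m^3/N)$, which vanishes as $N\to\infty$ for each fixed $m$.

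I do not expect any substantive obstacle in this proof; it is the simplest concentration step in the chain, analogous to Lemma \ref{lemma:mean zero gaussianisation} and Lemma \ref{lemma:gaussianisation without 1-}, but with the bound driven by the LLN on the truncated weights rather than by a decaying variance profile. The only mild subtlety is confirming that the square-root perturbation can be controlled, which is immediate from the Pareto lower bound $W \ge 1$.
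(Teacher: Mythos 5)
Your overall strategy coincides with the paper's: the two matrices share the same off-diagonal block, Hoffman--Wielandt (Proposition \ref{prop:hoffman-wielandt}) reduces the claim to showing $\frac1N\E\sum_{i=1}^N\bigl(Y_N^\circ(i,i)-\widehat Y_N^\circ(i,i)\bigr)^2\to0$, and independence of $Z_i$, $W_i^m$ and $\{W_k^m\}_{k\neq i}$ lets you factor out $\E[Z_1^2]\,\mathbf{E}[W_1^m]$, exactly as in the paper. You differ only in the final estimate: the paper uses $(\sqrt a-\sqrt b)^2\le|a-b|$ (valid for all $a,b\ge0$) and then the strong law of large numbers plus dominated convergence to get $\mathbf{E}\bigl|\tfrac1N\sum_k W_k^m-\mathbf{E}[W_1^m]\bigr|\to0$, whereas you bound $(\sqrt{S_{N,i}}-\sqrt{\mu_m})^2\le C(S_{N,i}-\mu_m)^2$ and compute a second moment, which gives the explicit rate $\bigO_N(m^3/N)$.

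There is, however, a false claim in your justification of that last bound: $W_k^m=W_k\one_{W_k\le m}$ is \emph{not} bounded below by $1$; it equals $0$ on the event $\{W_k>m\}$, which has probability $m^{-(\tau-1)}>0$. Hence neither $S_{N,i}\ge(N-1)/N$ nor $\mu_m\ge1$ holds as stated (indeed $\mu_m=\tfrac{\tau-1}{\tau-2}(1-m^{2-\tau})$ can be well below $1$ for $m$ close to $1$). The repair is immediate: since $m>1$ is fixed, $\mu_m>0$ is a fixed constant, so $\sqrt{S_{N,i}}+\sqrt{\mu_m}\ge\sqrt{\mu_m}$ and your inequality holds with the $m$-dependent constant $C=1/\mu_m$, which is harmless because the limit is taken at fixed $m$; alternatively, use the paper's inequality $(\sqrt a-\sqrt b)^2\le|a-b|$, which requires no lower bound at all. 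A minor additional imprecision: $\E[(S_{N,i}-\mu_m)^2]$ is not exactly $\Var(S_{N,i})$, since $\E[S_{N,i}]=\tfrac{N-1}{N}\mu_m$; the extra bias term $\mu_m^2/N^2$ is of lower order and does not affect the conclusion.
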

\begin{proof}
    We apply Proposition \ref{prop:hoffman-wielandt} to obtain 
    \begin{align}\label{eq:LLN argument}
        \E\left[ d_L(\ESD(\bfD_{N,g,c}^\circ), \ESD(\widehat{\bfD}_{N,g,c}^\circ))  \right] &\leq \frac{1}{N}\E\sum_{i=1}^N \left( Y_N^\circ(i,i) - \widehat{Y}_N^\circ(i,i) \right)^2 \nonumber\\
        &\leq \frac{1}{N}\E[Z_1^2]\ep[W_1^m]\sum_{i=1}^N\ep\left[  \left(  \frac{\sqrt{\sum_{k\neq i}W_k^m}}{\sqrt{N}} - \sqrt{\ep[W_1^m] }  \right)^2  \right] \nonumber \\
        &\leq \frac{m}{N}\sum_{i=1}^N \ep\left[\left|   \frac{\sum_{k=1}^N W_k^m}{N} - \ep[W_1^m]     \right|\right] \, . 
    \end{align}
    We have that $(W_i^m)_{i\in\Ver_N}$ is a bounded sequence of ~i.~i.~d. random variables, and in particular have finite variance. By the strong law of large numbers, we have that $$
    \lim_{N\to\infty}\frac{\sum_{k=1}^N W_k^m}{N} = \mathbf{E}[W_1^m]\quad \text{$\prob$--almost surely}\,.
    $$ 
    
However, by the boundedness of the weights, we have that $N^{-1}\sum_{i=1}^N W_i^m$ is uniformly bounded by $m$, which is integrable (with respect to $\E$). By the dominated convergence theorem, we have convergence in $L^1$, and consequently, \eqref{eq:LLN argument} goes to 0 as $N\to\infty$. We conclude with Markov's inequality. 
\end{proof}

We can now conclude that $\nu_{\tau,m}$ is the limiting measure of the $\ESD$ of the matrix $\widehat{\bfD}_{N,g,c}^\circ$. 

\subsection{Identification of $\nu_{\tau,m}
$} 
We have that 
$$
\lim_{N\to\infty}\ESD(\widehat{\bfD}_{N,g,c}^\circ) = \nu_{\tau,m} \quad \text{in $\prob$--probability}\,.$$ Notice that $\widehat{\bfD}_{N,g,c}^\circ$ can be written as \begin{align*}
    \widehat{\bfD}_{N,g,c}^\circ &= \A_{N,g,m}^\circ + \widehat{Y}_N^\circ \\
    &= \mathrm{W}_m^{1/2}\left(\frac{1}{\sqrt{N}}\mathrm{G}\right)\mathrm{W}_m^{1/2} + \sqrt{\mathbf{E}[W_1^m]}\mathrm{W}_m^{1/4}\mathrm{Z}\mathrm{W}_m^{1/4} \, ,
\end{align*}
where $\mathrm{W}_m = \diag(W_1^m,\ldots,W_N^m)$, $\mathrm{G}$ is a standard Wigner matrix with ~i.~i.~d $\gauss(0,1)$ entries above the diagonal and 0 on the diagonal, and $\mathrm{Z}$ is a diagonal matrix with ~i.~i.~d. $\gauss(0,1)$ entries. 

First we need to show that
\begin{align*}
 & \lim _{N \rightarrow \infty} \ESD\left(\mathrm{W}_m^{1/2}\left(\frac{1}{\sqrt{N}}\mathrm{G}\right)\mathrm{W}_m^{1/2} + \sqrt{\mathbf{E}[W_1^m]}\mathrm{W}_m^{1/4}\left(\frac{1}{\sqrt{N}}\mathrm{Z}\right)\mathrm{W}_m^{1/4} \right) \\ &=\mathcal{L}\left(T_{W_m}^{1/2} T_s T_{W_m}^{1 / 2}+\sqrt{\E[W_m]} T_{W_m}^{1/4} T_g T_{W_m}^{1/4}\right) \text { weakly in probability }.
\end{align*}

This easily follows by retracing the arguments in the proof of \cite[Theorem 1.3]{CHHS} and using the Lemma \ref{lemma:free prod} presented in the appendix. This shows that
$$\nu_{\tau, m}=\mathcal{L}\left(T_{W_m}^{1/2} T_s T_{W_m}^{1 / 2}+\sqrt{\E[W_m]} T_{W_m}^{1/4} T_g T_{W_m}^{1/4}\right).$$

\subsection{Identification of $\nu_\tau$} We now conclude with the proof of Theorem \ref{theorem: Laplacian measure identification}. 
\begin{proof}[Proof of Theorem \ref{theorem: Laplacian measure identification}]
We work in a tracial $W^*$-probability space $(\mathcal A,\varphi)$. Let
$T_g$ and $T_s$ denote, respectively, a standard Gaussian and a standard semicircular self-adjoint element.
For each $m\in\mathbb N$, let $T_{W_m}$ (resp. $T_W$) be self-adjoint affiliated operators with
distributions $\mu_{W_m}$ (resp. $\mu_W$). We assume $T_s$ is free from the abelian von Neumann
algebras $W^*(T_{W_m},T_g)$ and $W^*(T_W,T_g)$. By the freeness result stated in \citet[Proposition 4.1]{bercovici1993free}, this implies that for any bounded Borel $u,v$ the elements $u(T_{W_m})v(T_g)$ and $u(T_W)v(T_g)$ are free from $T_s$.

Fix the constant $c:=\sqrt{\E[W]}$. If one writes $c_m:=\sqrt{\E[W_m]}$, then $c_m\to c$. The replacement
$c_m\mapsto c$ is justified at the end. For $R>0$ define bounded truncations
$$u_R(x):=(x \wedge R)^{1/2}, \quad v_R(x):=(x \wedge R)^{1/4}.$$

Define affiliated (self-adjoint) operators 
\begin{equation}\label{eq:defQR} 
Q_R^{(m)} := u_R(T_{W_m})\,T_g\,u_R(T_{W_m}) + c\,v_R(T_{W_m})\,T_s\,v_R(T_{W_m}),
\end{equation}
and
\begin{equation}
Q_R := u_R(T_W)\,T_g\,u_R(T_W) + c\,v_R(T_W)\,T_s\,v_R(T_W),
\end{equation} 
noting that $u_R(\cdot)$ and $v_R(\cdot)$ are bounded while $T_g$ may be
unbounded, so $Q_R^{(m)},Q_R$ are in general affiliated rather than bounded.

Let $$\varepsilon_m:=d_\infty(\mu_{W_m},\mu_W)=\sup_{t\in\mathbb R}|
F_{\mu_{W_m}}(t)-F_{\mu_W}(t)|\to0.$$
Theorem 3.9 of \citet{bercovici1993free} allows us to represent $T_{W_m}$ and $T_W$ in the same space together with a projection $p_m\in W^*(T_{W_m},T_W,T_g)$ such that \begin{equation}
\varphi(p_m)\ge 1-\varepsilon_m,\qquad p_m\,T_{W_m}\,p_m = p_m\,T_W\,p_m. \label{eq:BVcompress}
\end{equation} 

Since $p_m$ lies in the abelian algebra generated by $T_{W_m},T_W,T_g$, it commutes with
$T_g$ and with bounded Borel functions of $T_{W_m},T_W$. Hence $p_m u_R(T_{W_m})=u_R(T_W)p_m$ and
$p_m v_R(T_{W_m})=v_R(T_W)p_m$, and therefore 
\begin{equation} 
p_m\,Q_R^{(m)}\,p_m\ =\
p_m\,Q_R\,p_m. \label{eq:compressQR} 
\end{equation} 

From the estimate (\citet[Theorem 3.9 (i)]{bercovici1993free})
if $A,B$ are selfadjoint and $pAp=pBp$ with $\varphi(p)\ge 1-\delta$, then $$\sup_t|F_A(t)-F_B(t)|
\le \delta$$  we obtain, for each fixed $R>0$, 
\begin{equation} d_\infty\big(\mathcal L(Q_R^{(m)}),\mathcal
L(Q_R)\big)\ \le\ 2\varepsilon_m\ \xrightarrow[m\to\infty]{}\ 0. \label{eq:fixedRconv} 
\end{equation}
%This uses no boundedness of $T_g$ and does not require $p_m$ to commute with $T_s$.

We introduce the following affiliated operators

$$H_m:=T_{W_m}^{1 / 2} T_g T_{W_m}^{1 / 2}+c T_{W_m}^{1 / 4} T_s T_{W_m}^{1 / 4}, \quad H:=T_W^{1 / 2} T_g T_W^{1 / 2}+c T_W^{1 / 4} T_s T_W^{1 / 4}.$$

Define the projections $q_{m,R}:=1_{[0,R]}(T_{W_m})$ and $q_R:=1_{[0,R]}(T_W)$. On the range $\mathrm{Ran}(q_{m,R})$ we have
$u_R(T_{W_m})=T_{W_m}^{1/2}$ and $v_R(T_{W_m})=T_{W_m}^{1/4}$, and similarly with $W$ in place of
$W_m$. Consequently,
$$q_{m, R} Q_R^{(m)} q_{m, R}=q_{m, R} H_m q_{m, R}, \quad q_R Q_R q_R=q_R H q_R.$$

Applying the same compression–distribution estimate we have
\begin{equation} \label{eq:Rtails}
d_\infty\big(\mathcal
L(Q_R^{(m)}),\mathcal L(H_m)\big)\ \le\ 2\,\varphi(1-q_{m,R})=2\,\big(1-F_{\mu_{W_m}}(R)\big),
\end{equation}
and 
\begin{equation}\label{eq:Rtails2}
d_\infty\big(\mathcal L(Q_R),\mathcal L(H)\big)\ \le\ 2\,\big(1-F_{\mu_W}(R)\big). 
\end{equation} 
Since $\mu_{W_m}\Rightarrow\mu_W$, the family ${\mu_{W_m}}$ is tight; hence $
\sup_m(1-F_{\mu_{W_m}}(R))\to0$ and $1-F_{\mu_W}(R)\to0$ as $R\to\infty$. Therefore, \begin{equation}
\mathcal L(Q_R^{(m)})\Rightarrow\mathcal L(H_m)\quad\text{and}\quad \mathcal L(Q_R)
\Rightarrow\mathcal L(H)\qquad(R\to\infty), \label{eq:Rlimit} \end{equation} with convergence uniform in
$m$ along the first convergence.

Fix $\eta>0$. Choose $R$ so large that both bounds in
\eqref{eq:Rtails} and \eqref{eq:Rtails2} are less than $\eta$ for all large $m$. Then choose $m$ large so that \eqref{eq:fixedRconv} gives
$d_\infty(\mathcal L(Q_R^{(m)}),\mathcal L(Q_R))\le\eta$. The triangle inequality yields
\begin{align*}
d_{\infty}\left(\mathcal{L}\left(H_m\right), \mathcal{L}(H)\right) &\leq d_{\infty}\left(\mathcal{L}\left(H_m\right), \mathcal{L}\left(Q_R^{(m)}\right)\right)+d_{\infty}\left(\mathcal{L}\left(Q_R^{(m)}\right), \mathcal{L}\left(Q_R\right)\right)+d_{\infty}\left(\mathcal{L}\left(Q_R\right), \mathcal{L}(H)\right)\\
&\leq 3 \eta
\end{align*}

As $\eta\downarrow0$ we conclude 
\begin{equation} \mathcal L(H_m)\ \Rightarrow\ \mathcal L(H)
\qquad(m\to\infty). \label{eq:HmtoH} 
\end{equation} 
This identifies $\nu_\tau=\mathcal L(H)$.

Finally, to replace $c_m=\sqrt{\E[W_m]}$ by $c=\sqrt{\E[W]}$, note that
\[
H_m^{(c)}-H_m^{(c_m)}=(c-c_m)\,T_{W_m}^{1/4}T_sT_{W_m}^{1/4}.
\]
Fix $M>0$ and set $r_{m,M}=1_{[0,M]}(T_{W_m})$. Then
\[
\|r_{m,M}(H_m^{(c)}-H_m^{(c_m)})r_{m,M}\,\|\le |c-c_m|\,M^{1/2}\,\|T_s\|.
\]
Therefore, for the bounded–Lipschitz distance $d_{BL}$,
\begin{equation}\label{finaldbl}
d_{BL}\left(\mathcal L(H_m^{(c)}),\mathcal L(H_m^{(c_m)})\right)
\le 2\,\varphi(1-r_{m,M}) + |c-c_m|\,M^{1/2}\,\|T_s\|.
\end{equation}

Indeed, on $\operatorname{Ran}\left(r_{m, M}\right)$,
$$
\left\|r_{m, M}\left(H_m^{(c)}-H_m^{\left(c_m\right)}\right) r_{m, M}\right\|=\left|c-c_m\right|\left\|r_{m, M} T_{W_m}^{1 / 4} T_s T_{W_m}^{1 / 4} r_{m, M}\right\| \leq\left|c-c_m\right| M^{1 / 2}\left\|T_s\right\|=: \varepsilon .
$$

For any bounded 1 -Lipschitz $f$ with $\|f\|_{\infty} \leq 1$,

$$
\begin{aligned}
\left|\varphi\left(f\left(H_m^{(c)}\right)\right)-\varphi\left(f\left(H_m^{\left(c_m\right)}\right)\right)\right| \leq & \left|\varphi\left(r_{m, M} f\left(H_m^{(c)}\right) r_{m, M}-r_{m, M} f\left(H_m^{\left(c_m\right)}\right) r_{m, M}\right)\right| \\
& +2 \varphi\left(1-r_{m, M}\right) \\
\leq & \operatorname{Lip}(f) \varepsilon+2 \varphi\left(1-r_{m, M}\right) \leq \varepsilon+2 \varphi\left(1-r_{m, M}\right)
\end{aligned}
$$

where we used $\|r f r-r g r\| \leq\|f-g\| \leq \operatorname{Lip}(f)\left\|r\left(H_m^{(c)}-H_m^{\left(c_m\right)}\right) r\right\|$ on the range of $r=r_{m, M}$. Taking sup over such $f$ gives \eqref{finaldbl}.
Since $c_m\to c$ and $\sup_m\varphi(1-r_{m,M})\to 0$ as $M\to\infty$ by tightness of $(\mu_{W_m})$, we obtain
$$d_{BL}\big(\mathcal L(H_m^{(c)}),\mathcal L(H_m^{(c_m)})\big)\to 0$$ as $m\to\infty$.
Consequently, the limit law is the same whether we use $c_m$ or the constant $c$.

\end{proof}
\section*{Acknowledgements} 
\begin{wrapfigure}{l}{0.07\textwidth} 
  \vspace{-\intextsep}   \includegraphics[width=0.07\textwidth]{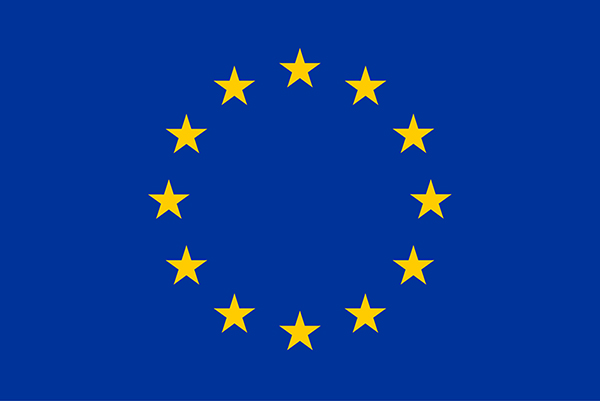}
\end{wrapfigure}
\noindent  The work of R.S.H.~and N.M.~is supported in part by the Netherlands Organisation for Scientific Research (NWO) through the Gravitation NETWORKS grant 024.002.003. The work of N.M. is further supported by the European Union’s Horizon 2020 research and innovation programme under the Marie Skłodowska-Curie grant agreement no. 945045.

\section{Appendix}\label{appendix:prel_lemmas}

\subsection{Proof of Lemma \ref{lemma:Laplacian Gaussianisation} }

\begin{proof}[Proof of Lemma \ref{lemma:Laplacian Gaussianisation}]
    Following the proof of \cite{CHMS2025} for the Laplacian, we define, conditional on the weights $(W_i)_{i\in \Ver_N}$,
    a sequence of independent random variables. Let $\Xvec_b = (X_{ij}^b)_{1\le i<j\le N}$ be a vector with  $X_{ij}^{b} \sim \Ber(p_{ij}) - \E[p_{ij}]$. Similarly, take another vector $\Xvec_g = (X_{ij}^g)_{1\le i<j\le N}$ with $X_{ij}^g \sim \gauss\left(\mu_{ij}\, , p_{ij}(1-p_{ij})\right)$. 

    Let $n=N(N-1)/2$ and $\xvec = (x_{ij})_{1\leq i<j\leq N} \in \Rr^{n}$. Define $\mathrm{R}(\xvec)$ to be the matrix-valued differentiable function given by 
    \[
    \mathrm{R}(\xvec) \coloneqq (\M_N(\xvec)-z\Id_N)^{-1}, 
    \]
    where $\M_N(\cdot)$ is the matrix-valued differentiable function that maps a vector in $\Rr^{n}$ to the space of $N\times N$ Hermitian matrices, given by 
    \[
    \M_N(\xvec)_{ij} = \begin{cases} 
        c_N^{-1/2} x_{ij} &\text{if $i< j$},\\
        c_N^{-1/2} x_{ji} &\text{if $i>j$},\\
        -c_N^{-1/2}\sum_{k\neq i}x_{ik} & \text{if $i=j$}.
    \end{cases}
    \] 
    Then, we see that $\bfD_N^o = \M_N(\Xvec_b)$ and $\bar{\bfD}_N = \M_N(\Xvec_g)$. Note that $$E^W[X_{ij}^b] = E^W[X_{ij}^g] = \mu_{ij},$$ and $$E^W[(X_{ij}^b)^2] = E^W[(X_{ij}^g)^2] = p_{ij} + \E[p_{ij}]^2 - 2p_{ij}\E[p_{ij}].$$  Consequently, using \cite[Theorem 1.1]{chatterjee2005simple} we have that 
    \begin{align}
        &\left| \E[ h(\Re H_z(\bar{\bfD}_N))]- \E[ h(\Re H_z(\bfD_{N}^o))]\right|\nonumber\\
        &=\left| \mathbf{E}\left[ E^W[h(\Re H_z(\bar{\bfD}_N))- h(\Re H_z(\bfD_{N}^o))]\right]\right|\nonumber\\
         &\leq C_1(h)\lambda_2(H)\sum_{1\le i<j\le N} \E[(X_{ij}^b)^2\one_{|X_{ij}^b|>K_N}] +  \E[(X_{ij}^g)^2\one_{|X_{ij}^g|>K_N}] \label{eq:Chatterjee<K}\\
    &+ C_2(h)\lambda_3(H)\sum_{1\le i<j\le N} \E[(X_{ij}^b)^3\one_{|X_{ij}^b|\leq K_N}] +  \E[(X_{ij}^g)^3\one_{|X_{ij}^g|\leq K_N}] 
    \end{align}

where $\lambda_2(H)\le C_2(\Im z)\frac{1}{Nc_N}$ and $\lambda_3(H)\le C_3(\Im z)\frac{1}{Nc_N^{3/2}}$.

We first deal with the terms in \eqref{eq:Chatterjee<K}. Note that since $p_{ij}\leq 1$, we have $|X_{ij}^b|\le 1$, and as a consequence, for any $K_N\ge 2$, the first term in \eqref{eq:Chatterjee<K} is zero. For the Gaussian term,  applying the Cauchy-Schwarz inequality followed by the second-moment Markov inequality yields
$$\E[(X_{ij}^g)^2\one_{|X_{ij}^g|>K_N}]\le \E[ (X_{ij}^g)^4]^{1/2} \prob(X_{ij}^g > K_N)^{1/2}\le K_N^{-1}\E[(X_{ij}^g)^4]^{1/2}\E[(X_{ij}^g)^2]^{1/2}.$$

Since $\E[(X_{ij}^g)^2]=\mathbf{E}[p_{ij} + \E[p_{ij}]^2 - 2p_{ij}\E[p_{ij}]] \le \mathbf{E}[p_{ij}]$, and similarly, $\E[(X_{ij}^g)^4]\le \mathbf{E}[p_{ij}^2]$, we have 
\begin{align*}
\lambda_2(H)\sum_{1\le i<j\le N}\E[(X_{ij}^g)^2\one_{|X_{ij}^g|>K_N}]&\le \frac{\lambda_2(H)}{K_N}\sum_{1\le i<j\le N}\frac{\mathbf{E}[W_i^2]^{1/2}\mathbf{E}[W_j^2]^{1/2}}{\|i-j\|^\alpha}\frac{\mathbf{E}[W_i]^{1/2}\mathbf{E}[W_j]^{1/2}}{\|i-j\|^{\alpha/2}}\\
&\le \frac{\lambda_2(H)}{K_N}\mathbf{E}[W_1]\mathbf{E}[W_1^2] N^{2-\frac{3\alpha}{2}}\\
&\le \frac{\tilde{c_2}\mathbf{E}[W_1]\mathbf{E}[W_1^2] N^{2-\frac{3\alpha}{2}} }{K_NN^{2-\alpha}}= \bigO_N(N^{-\alpha/2}K_N^{-1}),
\end{align*}
where the last equality follows as $\tau>3$ and $\tilde{c_2}$ is a constant depending on $\Im(z)$ only.  For the term containing the third moments, we see that 

\begin{align*}
\lambda_3(H)\sum_{1\le i<j\le N} \E[(X_{ij}^b)^3\one_{|X_{ij}^b|\leq K_N}] +  \E[(X_{ij}^g)^3\one_{|X_{ij}^g|\leq K_N}] &\le \lambda_3(H) K_N\sum_{1\le i\le j\le N}\E[(X_{ij}^b)^2]+\E[(X_{ij}^g)^2]\\
&\le \lambda_3(H) K_N 2\mathbf{E}[W_1]^2\sum_{1\le i\le j\le N}\frac{1}{\|i-j\|^\alpha}\\
&\le \frac{c_3(\Im z)}{N c_N^{3/2}} K_N \mathbf{E}[W_1]^2N c_N\le \tilde{c}_3 K_Nc_{N}^{-1/2}.
\end{align*}
Here $\tilde{c}_3$ is a constant depending on $\Im(z)$. Choosing any $2\le K_N\ll c_N^{1/2}$, both terms go to zero. This completes the proof of the Gausssinisation. 
\end{proof}

\subsection{Technical lemmas}\label{sec: technical lemmas SFP} 
In this subsection we collect some technical lemmas that are used in the proofs of our main results.
For bounding the $d_L$ distance between the ESDs of two matrices, we will need the following inequality, due to Hoffman and Wielandt.

\begin{proposition}[Hoffman-Wielandt inequality{~\citep[Corollary A.41]{Bai-silverstein}}]\label{prop:hoffman-wielandt}
Let $\mathbf{A}$ and $\mathbf{B}$ be two $N \times N$ normal matrices and let  $\ESD(\A)$ and $\ESD(\mathbf{B})$  be their ESDs, respectively. Then,
\begin{equation}\label{HWinequality}
d_L\left(\ESD(\A), \ESD(\mathbf{B})\right)^3 \leq \frac{1}{N} \Tr\left[(\mathbf{A}-\mathbf{B})(\mathbf{A}-\mathbf{B})^*\right] .
\end{equation}
Here $\A^*$ denotes the conjugate transpose of $\A.$ 
Moreover, if $\A$ and $\mathbf{B}$ are two Hermitian matrices of size $N\times N$, then
\begin{equation}\label{HW2}
\sum_{i=1}^N\left(\lambda_i(\A)-\lambda_i(\mathbf{B})\right)^2 \leq \Tr[(\A-\mathbf{B})^2].
\end{equation}
\end{proposition}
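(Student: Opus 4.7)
The plan is to prove the stronger Hermitian bound \eqref{HW2} first, and then derive the Lévy--Prokhorov bound \eqref{HWinequality} from it, so that the normal case follows by replacing the sorted pairing by the permutation selected by a Birkhoff argument.

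For \eqref{HW2} in the Hermitian case, I would start from the spectral decompositions $\A = U D_A U^*$ and $\mathbf{B} = V D_B V^*$ with diagonal matrices $D_A = \diag(\lambda_1(\A),\ldots,\lambda_N(\A))$ and $D_B = \diag(\lambda_1(\mathbf{B}),\ldots,\lambda_N(\mathbf{B}))$ listing the eigenvalues in non-decreasing order, and expand
$$\Tr[(\A - \mathbf{B})^2] = \sum_{i=1}^N \lambda_i(\A)^2 + \sum_{i=1}^N \lambda_i(\mathbf{B})^2 - 2\,\Tr[\A\mathbf{B}].$$
Setting $W = U^* V$, one obtains $\Tr[\A\mathbf{B}] = \sum_{i,j} \lambda_i(\A)\lambda_j(\mathbf{B})\,|W_{ij}|^2$. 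Since $W$ is unitary, the matrix $S = (|W_{ij}|^2)_{i,j}$ is doubly stochastic, hence by Birkhoff's theorem a convex combination of permutation matrices $P_\sigma$. Therefore
$$\Tr[\A\mathbf{B}] \;\leq\; \max_{\sigma\in S_N}\sum_{i=1}^N \lambda_i(\A)\lambda_{\sigma(i)}(\mathbf{B}),$$
and the rearrangement inequality, applied to the two sorted sequences, identifies the maximiser as $\sigma=\mathrm{id}$. Substituting back yields $\sum_i (\lambda_i(\A)-\lambda_i(\mathbf{B}))^2 \leq \Tr[(\A-\mathbf{B})^2]$, which is \eqref{HW2}.

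For \eqref{HWinequality}, I would convert the $\ell^2$ control \eqref{HW2} into a Lévy--Prokhorov bound via a quantitative matching argument. Set $a_i := \lambda_i(\A)$, $b_i := \lambda_i(\mathbf{B})$, $\mu := N^{-1}\sum_i \delta_{a_i}$, $\nu := N^{-1}\sum_i \delta_{b_i}$, fix $\varepsilon>0$, and define $I_\varepsilon := \{i : |a_i-b_i|\leq \varepsilon\}$. For every Borel set $A$, each $i\in I_\varepsilon$ with $a_i\in A$ satisfies $b_i\in A^\varepsilon$, so $\mu(A) \leq \nu(A^\varepsilon) + N^{-1}\#I_\varepsilon^c$. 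Chebyshev yields $\#I_\varepsilon^c \leq \varepsilon^{-2}\sum_i(a_i-b_i)^2$, so taking $\varepsilon = \bigl(\frac{1}{N}\sum_i(a_i-b_i)^2\bigr)^{1/3}$ makes $N^{-1}\#I_\varepsilon^c\leq \varepsilon$, and by symmetry $d_L(\mu,\nu)\leq \varepsilon$. Cubing and combining with \eqref{HW2} gives \eqref{HWinequality} in the Hermitian case.

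The extension to general normal matrices is the main obstacle because eigenvalues now live in $\mathbb{C}$ and there is no canonical linear order for the rearrangement step. One still has
$$\Tr[(\A-\mathbf{B})(\A-\mathbf{B})^*] = \sum_i|\lambda_i(\A)|^2 + \sum_i|\lambda_i(\mathbf{B})|^2 - 2\,\Re\!\sum_{i,j}\lambda_i(\A)\overline{\lambda_j(\mathbf{B})}|W_{ij}|^2,$$
and the Birkhoff step reduces the cross term to a convex combination of $\Re\sum_i \lambda_i(\A)\overline{\lambda_{\sigma(i)}(\mathbf{B})}$ over $\sigma\in S_N$; selecting the permutation $\sigma^*$ that maximises this real part produces the matching-based bound $\sum_i |\lambda_i(\A)-\lambda_{\sigma^*(i)}(\mathbf{B})|^2 \leq \Tr[(\A-\mathbf{B})(\A-\mathbf{B})^*]$. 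The Lévy--Prokhorov argument above then runs verbatim with the pairing $(a_i,b_{\sigma^*(i)})$ in place of $(a_i,b_i)$, yielding \eqref{HWinequality} in full generality (where for complex spectra $d_L$ is understood on $\mathcal{P}(\mathbb{C})$ with $A^\varepsilon$ the Euclidean $\varepsilon$-neighbourhood, and specialises to the stated $\mathbb{R}$-valued version whenever both matrices are Hermitian).
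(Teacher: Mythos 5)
Your proposal is correct. Note that the paper does not prove this proposition at all: it is quoted as a known result from \citep[Corollary A.41]{Bai-silverstein}, so there is no internal proof to compare against. Your argument is essentially the standard one behind that reference: the spectral decompositions plus Birkhoff's theorem give the Hoffman--Wielandt matching bound (with the rearrangement inequality selecting the sorted pairing in the Hermitian case, and an optimal permutation $\sigma^*$ in the normal case, where the ESDs are then measures on $\mathbb{C}$), and the Chebyshev counting step with $\varepsilon^3=\frac{1}{N}\sum_i|a_i-b_{\sigma^*(i)}|^2$ converts the paired $\ell^2$ control into the cubed L\'evy--Prokhorov bound. The only point to tighten is the neighbourhood step: if $A^\varepsilon$ is taken to be the open $\varepsilon$-neighbourhood, then $|a_i-b_i|\leq\varepsilon$ only places $b_i$ in its closure, so either define $I_\varepsilon$ with a strict inequality (Chebyshev applies unchanged to $\{i:|a_i-b_i|\geq\varepsilon\}$) or pass to $\varepsilon'>\varepsilon$ and use that $d_L$ is an infimum; this is cosmetic and does not affect the argument.
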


The next two straightforward lemmas control the tail of the product of two Pareto random variables and the expectation of a truncated Pareto.
\begin{lemma}\label{lemma:twotails}
Let $X$ and $Y$ be two independent Pareto r.v.'s with parameters $\beta_1$ and $\beta_2$ respectively, with $\beta_1\leq \beta_2$. There exist constants $c_1=c_1(\beta_1,\beta_2)>0$ and $c_2=c_2(\beta_1)>0$ such that 
$$
\mathbf P(XY>t) 
    = \begin{cases} c_1t^{-\beta_1} & \text{ if }\beta_1< \beta_2\\
c_2t^{-\beta_1}\log t & \text{ if } \beta_1= \beta_2.
\end{cases}
$$
\end{lemma}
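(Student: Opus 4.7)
The plan is to compute $\mathbf P(XY>t)$ by direct integration, conditioning on $X$ and using the explicit Pareto density. Recall that $X$ has density $f_X(x)=\beta_1 x^{-\beta_1-1}\mathbf 1_{x\geq 1}$ and $\mathbf P(Y>s)=s^{-\beta_2}$ for $s\geq 1$, while $\mathbf P(Y>s)=1$ for $s\leq 1$. By independence,
\[
\mathbf P(XY>t)=\int_1^\infty \mathbf P(Y>t/x)\,\beta_1 x^{-\beta_1-1}\,dx,
\]
and the natural move is to split the integral at $x=t$ according to whether $t/x\geq 1$ or $t/x<1$. The tail piece $x\geq t$ contributes $\int_t^\infty \beta_1 x^{-\beta_1-1}dx=t^{-\beta_1}$ (here the conditional event has probability $1$). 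The bulk piece $1\leq x\leq t$ contributes $\beta_1 t^{-\beta_2}\int_1^t x^{\beta_2-\beta_1-1}\,dx$.

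The two cases of the lemma arise precisely from evaluating this last integral. When $\beta_1<\beta_2$, the exponent $\beta_2-\beta_1-1>-1$ produces $(t^{\beta_2-\beta_1}-1)/(\beta_2-\beta_1)$, and after combining with the tail piece one gets
\[
\mathbf P(XY>t)=\frac{\beta_2}{\beta_2-\beta_1}t^{-\beta_1}-\frac{\beta_1}{\beta_2-\beta_1}t^{-\beta_2},
\]
whose dominant term as $t\to\infty$ is $c_1 t^{-\beta_1}$ with $c_1=\beta_2/(\beta_2-\beta_1)$. When $\beta_1=\beta_2$, the integrand becomes $x^{-1}$ and the bulk piece is $\beta_1 t^{-\beta_1}\log t$, which dominates the $O(t^{-\beta_1})$ tail piece, yielding the $c_2 t^{-\beta_1}\log t$ behaviour with $c_2=\beta_1$. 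So the lemma should really be read asymptotically as $t\to\infty$ (up to the subleading $O(t^{-\beta_1})$ correction absorbed into the constant), which is what is used in Lemma~\ref{lemma:removal of wedge 1} through the bound $\mathbf P(W_iW_j\geq \|i-j\|^\alpha)\leq c\|i-j\|^{-\alpha(\tau-1)}$.

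No step is really an obstacle; the whole argument is a one-line integral computation once one splits at $x=t$. The only thing to be careful about is the case distinction $\beta_1<\beta_2$ versus $\beta_1=\beta_2$, since these correspond to a convergent versus logarithmically divergent Mellin-type integral $\int_1^t x^{\beta_2-\beta_1-1}dx$. I would present the two cases in a single display, note that both are upper bounds of the desired order (which is all that Lemma~\ref{lemma:removal of wedge 1} requires), and thereby produce constants $c_1,c_2$ depending only on the parameters as claimed.
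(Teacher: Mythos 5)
Your computation is correct and is exactly the routine conditioning-on-$X$ integral that the paper intends: Lemma~\ref{lemma:twotails} is stated there without proof as a ``straightforward'' fact, and your split of $\int_1^\infty \mathbf P(Y>t/x)\,\beta_1 x^{-\beta_1-1}\,dx$ at $x=t$, with the resulting constants $c_1=\beta_2/(\beta_2-\beta_1)$ and $c_2=\beta_1$, is the standard argument. Your observation that the displayed equality should be read asymptotically in $t$ (or as an upper bound, with the subleading term absorbed into the constant) is also apt, since that is all the application in Lemma~\ref{lemma:removal of wedge 1} requires.
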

% \begin{proof}
% Suppose $\beta_1\neq \beta_2$. Since $X, Y\ge 1$, we have for $t\ge 1$,
% \begin{align*}
% \mathbf P (XY> t)&= \mathbf P( XY> t, X> t) + \mathbf P( XY> t, X\le t)\\
% &= \mathbf P( X>t) + \mathbf P( XY> t, X\le t)\\
% &= t^{-\beta_1}+\beta_1\beta_2\int_1^t x^{-\beta_1-1}\int_{t/x}^\infty y^{-\beta_2-1} \mathrm d y \mathrm d x\\
% &= t^{-\beta_1}+\beta_1\beta_2\int_1^t x^{-\beta_1-1}\left(\frac{t}{x}\right)^{-\beta_2}\mathrm d x\mic{CONST\,\, MISSING?}\\
% &= t^{-\beta_1}+ t^{-\beta_2} \frac{\beta_1\beta_2}{\beta_2-\beta_1}(t^{\beta_2-\beta_1}-1)\\
% &= t^{-(\beta_1\wedge \beta_2)}(1+ c_2) .
% \end{align*}
% The case when $\beta_1=\beta_2$ can be dealt with similarly, where the integral in the above computation becomes a logarithm.
% \end{proof}
%We will also use the following quick lemma.
\begin{lemma}\label{lem:Par_trunc}
    Let $X$ be a  Pareto random variable with law $\mathbf{P}$ and parameter $\beta>1$. For any $m>0$ it holds
    \[
    \mathbf E\left[X\one_{X\geq m}\right]=\frac{\beta}{(\beta-1)} m^{1-\beta}. 
    \]
\end{lemma}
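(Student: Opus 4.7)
The plan is to compute the integral directly using the explicit form of the Pareto distribution given in equation~\eqref{eq:paretolaw}. Since $X$ has tail $\mathbf{P}(X > t) = t^{-\beta}$ for $t \geq 1$ and is almost surely at least $1$, the density of $X$ on $[1,\infty)$ is $f(x) = \beta x^{-\beta-1}$. The lemma is stated for $m>0$, but the interesting regime (and the one that matches the formula) is $m \geq 1$; otherwise the indicator is identically one and the right-hand side should be read accordingly.

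For $m \geq 1$, I would write
\[
\mathbf{E}\left[X\one_{X\geq m}\right] = \int_m^\infty x\,\beta x^{-\beta-1}\,dx = \beta \int_m^\infty x^{-\beta}\,dx,
\]
and since $\beta > 1$ the integral converges to $m^{1-\beta}/(\beta-1)$, yielding the claimed identity $\frac{\beta}{\beta-1}m^{1-\beta}$. An equally short alternative is the layer-cake identity
\[
\mathbf{E}\left[X\one_{X\geq m}\right] = m\,\mathbf{P}(X\geq m) + \int_m^\infty \mathbf{P}(X>t)\,dt = m^{1-\beta} + \frac{m^{1-\beta}}{\beta-1} = \frac{\beta}{\beta-1}m^{1-\beta},
\]
which avoids invoking the density explicitly.

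There is essentially no obstacle here: the assumption $\beta > 1$ is exactly what is needed for the tail integral to converge, and the computation is one line. The only minor subtlety is the boundary case $m < 1$, where $X \geq m$ holds almost surely and so the left-hand side equals $\mathbf{E}[X] = \beta/(\beta-1)$; this matches the stated formula only at $m=1$. In the places where this lemma is invoked in the proof of Lemma~\ref{lemma:truncation}, the truncation threshold $m$ is taken to be at least $1$, so the result is applied in the regime where the calculation above is valid.
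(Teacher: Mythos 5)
Your computation is correct and is precisely the routine calculation the paper leaves implicit — the lemma is stated in the appendix without proof as a "straightforward" fact, and either of your two one-line arguments (integrating against the density $\beta x^{-\beta-1}$ on $[1,\infty)$, or the layer-cake identity) establishes it. Your remark that the identity as stated only holds for $m\geq 1$ (for $0<m<1$ the left side is just $\mathbf{E}[X]=\beta/(\beta-1)$) is a fair observation, and as you note, the lemma is only invoked with truncation level $m>1$ in Lemma \ref{lemma:truncation}, so no issue arises.
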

We state one final auxiliary lemma related to the approximation of sums by integrals.
\begin{lemma}\label{lem:sumtoint}
    Let $\beta\in (0,\,1]$. Then there exists a constant $c_1=c_1(\beta)>0$ such that 
    \begin{equation}\label{eq:alpha_sum_bound}
    \frac{1}{N}\sum_{i\neq j\in \Ver_N}\frac{1}{\|i-j\|^\beta}\sim c_1 \max\{N^{1-\beta},\,\log N\}.
    \end{equation}
    If instead $\beta>1$, there exists a constant $c_2>0$ such that 
    \[
    \frac{1}{N}\sum_{i\neq j\in \Ver_N}\frac{1}{\|i-j\|^\beta}\sim c_2\, .
    \]
\end{lemma}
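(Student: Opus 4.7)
The plan is to exploit the translation invariance of the torus to reduce the double sum to a single sum and then compare that sum to an explicit integral via standard Euler--Maclaurin--type bounds.

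First I would note that for the torus distance $\|i-j\| = |i-j|\wedge(N-|i-j|)$, the map $j\mapsto \|i-j\|$ takes, as $j$ runs over $\Ver_N\setminus\{i\}$, each value in $\{1,2,\ldots,\lfloor N/2\rfloor\}$ exactly twice, except that if $N$ is even the value $N/2$ is attained only once. Consequently, for every $i\in \Ver_N$,
\begin{equation*}
\sum_{j\neq i\in \Ver_N}\frac{1}{\|i-j\|^\beta}
= 2\sum_{k=1}^{\lfloor N/2\rfloor}\frac{1}{k^\beta}\;+\;O(N^{-\beta}),
\end{equation*}
where the error term corrects for the possible overcounting at $k=N/2$. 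Summing over $i\in\Ver_N$ and dividing by $N$ then gives
\begin{equation*}
\frac{1}{N}\sum_{i\neq j\in \Ver_N}\frac{1}{\|i-j\|^\beta}
= 2\sum_{k=1}^{\lfloor N/2\rfloor}\frac{1}{k^\beta} + O(N^{-\beta}),
\end{equation*}
so the quantity of interest is the partial sum of a $p$-series, independent of $i$.

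Next I would handle the three regimes by comparison with the integral $\int_1^{N/2} x^{-\beta}\,dx$. Since $x\mapsto x^{-\beta}$ is decreasing on $[1,\infty)$, the usual monotone-function bounds give
\begin{equation*}
\int_1^{\lfloor N/2\rfloor+1}x^{-\beta}\,dx \;\le\; \sum_{k=1}^{\lfloor N/2\rfloor}k^{-\beta}\;\le\; 1+\int_1^{\lfloor N/2\rfloor}x^{-\beta}\,dx.
\end{equation*}
For $\beta\in(0,1)$, evaluating the integral yields $\frac{(N/2)^{1-\beta}-1}{1-\beta}$, which is asymptotic to $c_1(\beta) N^{1-\beta}$ with $c_1(\beta)= 2^{\beta}/(1-\beta)$. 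For $\beta=1$, the integral gives $\log(N/2)\sim \log N$, so the sum is asymptotic to $\log N$. In both subcases the error $O(N^{-\beta})$ is negligible compared with the leading term, yielding the claimed equivalence with $c_1\max\{N^{1-\beta},\log N\}$.

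For $\beta>1$ the series $\sum_{k\ge 1}k^{-\beta}=\zeta(\beta)<\infty$ converges absolutely, so the partial sum up to $\lfloor N/2\rfloor$ converges to $\zeta(\beta)$ as $N\to\infty$, and we obtain
\begin{equation*}
\frac{1}{N}\sum_{i\neq j\in \Ver_N}\frac{1}{\|i-j\|^\beta}\longrightarrow 2\zeta(\beta)=:c_2.
\end{equation*}
There is no genuine obstacle here; the only points requiring a little care are the boundary term at $k=\lfloor N/2\rfloor$ (handled by the uniform $O(N^{-\beta})$ correction) and tracking which of the two terms $N^{1-\beta}$ and $\log N$ dominates in the regime $\beta\in(0,1]$, which is immediate from the explicit integral evaluation.
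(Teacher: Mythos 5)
Your argument is correct and is exactly the sum-to-integral comparison the paper has in mind (the lemma is stated there without proof as a standard approximation of sums by integrals): translation invariance on the torus reduces the double sum to $2\sum_{k\le \lfloor N/2\rfloor}k^{-\beta}$ up to an $O(N^{-\beta})$ boundary term, and monotone comparison with $\int x^{-\beta}\,dx$ gives the three regimes, with $c_1(\beta)=2^{\beta}/(1-\beta)$ for $\beta<1$ and $c_2=2\zeta(\beta)$ for $\beta>1$. The only cosmetic point is that at $\beta=1$ the factor $2$ should be carried through, giving $c_1(1)=2$ rather than $1$, which is immaterial since the lemma only asserts existence of $c_1(\beta)>0$.
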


We end this section by quoting, for the reader's convenience, the following lemma from \citet[ Fact 4.3]{Chakrabarty:Hazra:Sarkar:2016}.
\begin{lemma}\label{lemma:slutsky} Let $(\Sigma, d)$ be a complete metric space, and let $(\Omega, \mathcal{A}, P)$ be a probability space. Suppose that $\left(X_{m n}:(m, n) \in\{1,2, \ldots, \infty\}^2 \backslash\{\infty, \infty\}\right)$ is a family of random elements in $\Sigma$, that is, measurable maps from $\Omega$ to $\Sigma$, the latter being equipped with the Borel $\sigma$-field induced by $d$. Assume that
\begin{enumerate}[label=(\arabic*),ref=(\arabic*)]
\item\label{item:1ar}for all fixed $1 \leq m<\infty$
$$
 \lim_{n\to \infty}d\left(X_{m n}, X_{m \infty}\right) =0 \text{ in $P$-probability}.$$
\item\label{item:2ar} For all $\varepsilon>0$,
$$
\lim _{m \rightarrow \infty} \limsup _{n \rightarrow \infty} P\left(d\left(X_{m n}, X_{\infty n}\right)>\varepsilon\right)=0 .
$$
\end{enumerate}

Then, there exists a random element $X_{\infty \infty}$ of $\Sigma$ such that
\begin{equation}\label{eq:450}
\lim_{m\to\infty}d\left(X_{m \infty}, X_{\infty \infty}\right) = 0 \text{ in $P$-probability}
\end{equation}
and
$$
 \lim_{n\to \infty}d\left(X_{\infty n}, X_{\infty \infty}\right) = 0\text{ in $P$-probability}.
$$
Furthermore, if $X_{m \infty}$ is deterministic for all $m$, then so is $X_{\infty \infty}$, and  \eqref{eq:450} simplifies to
\begin{equation}\label{eq:451}
\lim _{m \rightarrow \infty} d\left(X_{m \infty}, X_{\infty \infty}\right)=0 .
\end{equation}

\end{lemma}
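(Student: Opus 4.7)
The strategy is the classical one: use conditions \ref{item:1ar} and \ref{item:2ar} to show that the sequence $(X_{m\infty})_{m\geq 1}$ is Cauchy in probability, invoke completeness of $(\Sigma,d)$ to produce $X_{\infty\infty}$ as its probability limit, and then a symmetric triangle-inequality argument to handle $(X_{\infty n})_n$.

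The plan is to first prove that $(X_{m\infty})_{m\geq 1}$ is Cauchy in probability. Fix $\varepsilon>0$ and $\delta>0$. For any $m,m',n$, the triangle inequality gives
\[
d(X_{m\infty},X_{m'\infty})
\leq d(X_{m\infty},X_{mn}) + d(X_{mn},X_{\infty n}) + d(X_{\infty n},X_{m'n}) + d(X_{m'n},X_{m'\infty}),
\]
so
\begin{equation}\label{eq:slutsky-4terms}
P\bigl(d(X_{m\infty},X_{m'\infty})>\varepsilon\bigr) \leq \sum_{i=1}^{4}P(\text{$i$-th term}>\varepsilon/4).
\end{equation}
By \ref{item:2ar}, choose $M$ so that $\limsup_n P(d(X_{mn},X_{\infty n})>\varepsilon/4)<\delta/4$ for all $m\geq M$; this controls the second and third terms uniformly in $n$ (eventually in $n$). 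For $m,m'\geq M$ fixed, by \ref{item:1ar} choose $n$ large enough that the first and fourth terms are each less than $\delta/4$. Since the left-hand side of \eqref{eq:slutsky-4terms} does not depend on $n$, this gives $P(d(X_{m\infty},X_{m'\infty})>\varepsilon)<\delta$ for all $m,m'\geq M$, as desired.

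Next I would invoke completeness: it is a standard fact that in a complete metric space, a sequence of random elements that is Cauchy in probability admits a limit in probability (one passes to an a.s.\ convergent subsequence via Borel--Cantelli, and the limit is measurable by completeness). Define $X_{\infty\infty}$ as this limit, so $d(X_{m\infty},X_{\infty\infty})\to 0$ in $P$-probability. To then obtain $d(X_{\infty n},X_{\infty\infty})\to 0$ in $P$-probability, I would split similarly:
\[
d(X_{\infty n},X_{\infty\infty})\leq d(X_{\infty n},X_{mn}) + d(X_{mn},X_{m\infty}) + d(X_{m\infty},X_{\infty\infty}).
\]
Given $\varepsilon,\delta>0$, use \ref{item:2ar} and the already-proved convergence $X_{m\infty}\to X_{\infty\infty}$ to pick $m$ making the first (in limsup over $n$) and third summands small, then use \ref{item:1ar} to choose $n$ large enough to make the middle summand small.

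For the final ``deterministic'' claim, when each $X_{m\infty}$ is a constant in $\Sigma$, the quantity $d(X_{m\infty},X_{m'\infty})$ is deterministic, so the Cauchy-in-probability statement degenerates into Cauchy in the metric $d$. By completeness of $(\Sigma,d)$ this sequence converges to some point $x_\infty\in\Sigma$, and a subsequence on which $X_{m_k\infty}\to X_{\infty\infty}$ almost surely forces $X_{\infty\infty}=x_\infty$ a.s., yielding \eqref{eq:451}. The argument is essentially bookkeeping; the only mild obstacle is keeping the $\limsup_n$ in \ref{item:2ar} and the ``for fixed $m$'' quantifier in \ref{item:1ar} in the correct order when choosing $m$ before $n$, which is handled by fixing $m$ first to control the ``infinity'' terms and then sending $n\to\infty$.
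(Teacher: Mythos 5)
Your proof is correct. Note, however, that the paper does not prove this lemma at all: it is quoted verbatim as Fact 4.3 from Chakrabarty, Hazra and Sarkar (2016), so there is no in-paper argument to compare against. Your sketch is the standard proof of that fact — the four-term triangle inequality to get that $(X_{m\infty})_m$ is Cauchy in probability, completeness plus a Borel--Cantelli subsequence to produce $X_{\infty\infty}$, and the three-term split for $X_{\infty n}\to X_{\infty\infty}$ — and the quantifier bookkeeping (fix $m,m'$ using \ref{item:2ar} first, then send $n\to\infty$ using \ref{item:1ar}) is handled correctly. One cosmetic point: measurability of $X_{\infty\infty}$ comes from being an a.s.\ pointwise limit of measurable maps into a metric space (completeness only supplies existence of the limit), which is worth phrasing that way if you write this out in full.
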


\begin{lemma}[Fact A.4 \cite{CHHS}]\label{lemma:free prod} Suppose that $W_N$ is an $N \times N$ scaled standard Gaussian Wigner matrix, i.e., a symmetric matrix whose upper triangular entries are i.~i.~d. normal with mean zero and variance $1 / N$. Let $D_N^1$ and $D_N^2$ be (possibly random) $N \times N$ symmetric matrices such that there exists a deterministic $C$ satisfying

$$
\sup _{N \geq 1, i=1,2}\left\|D_N^i\right\| \leq C<\infty
$$

where $\|\cdot\|$ denotes the usual matrix norm (which is same as the largest singular value for a symmetric matrix). Furthermore, assume that there is a $W^*$-probability space $(\mathcal{A}, \varphi)$ in which there are self-adjoint elements $d_1$ and $d_2$ such that, for any polynomial $p$ in two variables, it

$$
\lim _{N \rightarrow \infty} \frac{1}{N} \operatorname{Tr}\left(p\left(D_N^1, D_N^2\right)\right)=\varphi\left(p\left(d_1, d_2\right)\right) \text { a.s. }
$$

Finally, suppose that $\left(D_N^1, D_N^2\right)$ is independent of $W_N$. Then there exists a self-adjoint element $s$ in $\mathcal{A}$ (possibly after expansion) that has the standard semicircle distribution and is freely independent of $\left(d_1, d_2\right)$, and is such that

$$
\lim _{N \rightarrow \infty} \frac{1}{N} \operatorname{Tr}\left(p\left(W_N, D_N^1, D_N^2\right)\right)=\varphi\left(p\left(s, d_1, d_2\right)\right) \text { a.s. }
$$

for any polynomial $p$ in three variables.
\end{lemma}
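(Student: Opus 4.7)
The plan is to establish this asymptotic-freeness statement by the moment method combined with Wick's formula, following the classical strategy for Wigner matrices and deterministic envelopes (e.g.\ \cite[Theorem 5.4.2]{anderson2010introduction}), but adapted to the fact that here $(D_N^1,D_N^2)$ is only assumed to have bounded operator norm and to be jointly convergent in $*$-distribution. By linearity, it suffices to prove the convergence for arbitrary non-commutative monomials of the alternating form
$$
M_N = W_N\, Q_N^{(1)}\, W_N\, Q_N^{(2)}\, \cdots\, W_N\, Q_N^{(k)},
$$
where each $Q_N^{(\ell)}$ is a word in $D_N^1,D_N^2$ (with the convention that consecutive $W_N$'s are absorbed by setting some $Q_N^{(\ell)}=\Id_N$, which still has uniformly bounded norm). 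By independence of $W_N$ and $(D_N^1,D_N^2)$, I would first work conditionally on $(D_N^1,D_N^2)$, in which the matrix $W_N$ is a genuine Gaussian Wigner matrix with i.~i.~d.\ $\gauss(0,1/N)$ upper-triangular entries.

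Expanding the trace in coordinates and applying Wick's formula to the product of Gaussian entries of $W_N$ reduces $\ep^W[N^{-1}\Tr(M_N)]$ to a sum over pair partitions $\pi$ of $[k]$ (hence only even $k$ contributes). Each pair $(\ell,\ell')\in \pi$ identifies two indices, and the remaining free summation is over indices attached to $\pi$. A standard analysis shows:
\begin{itemize}
\item the combined pre-factor $N^{-1-k/2}$ from the normalisation and the variance of $W_N$ is balanced by exactly $N^{1+k/2}$ free indices when $\pi$ is a \emph{non-crossing} pair partition, giving an $O(1)$ contribution;
\item when $\pi$ has at least one crossing, strictly fewer free indices are available, and the uniform bound $\|Q_N^{(\ell)}\|\le C'$ (a consequence of $\|D_N^i\|\le C$) forces the contribution to vanish as $N\to\infty$.
\end{itemize}
For each non-crossing pair partition $\pi\in NC_2(k)$, the resulting limit factorises, along the nested structure of $\pi$, into a product of normalised traces of words in $D_N^1,D_N^2$; by the assumed a.s.\ joint convergence $N^{-1}\Tr(q(D_N^1,D_N^2))\to\varphi(q(d_1,d_2))$, each factor converges to the corresponding $\varphi$-evaluation on $(d_1,d_2)$. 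Summing over $NC_2(k)$ reproduces exactly the non-crossing moment-cumulant expansion of a standard semicircle element $s$ that is free from the algebra generated by $(d_1,d_2)$; this identification can be carried out either directly with the Voiculescu moment-cumulant formula \cite{Nica:Speicher} or by constructing $s$ as a free-product extension of $(\mathcal A,\varphi)$. Thus $\lim_N \ep^W[N^{-1}\Tr(M_N)] = \varphi(p(s,d_1,d_2))$ almost surely.

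The remaining task is to upgrade convergence in $\ep^W$-expectation to almost-sure convergence of $N^{-1}\Tr(M_N)$ itself. I would do this by estimating $\var^W(N^{-1}\Tr(M_N))$ via the same Wick expansion applied to $N^{-2}\ep^W[(\Tr M_N)^2]-(\ep^W[N^{-1}\Tr M_N])^2$: the cancellation removes all pair partitions of the doubled word that split into non-crossing partitions on each half independently, leaving only partitions with at least one ``crossing'' edge between the two copies of the word. A counting of free indices as above then shows this variance is $O(N^{-2})$ uniformly in $(D_N^1,D_N^2)$ (using $\|D_N^i\|\le C$), whence Borel--Cantelli gives a.s.\ convergence conditional on $(D_N^1,D_N^2)$, and then unconditionally by Fubini.

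The main obstacle I anticipate is the combinatorial bookkeeping in the cancellation step above: handling general (not necessarily alternating) monomials requires carefully tracking how the non-crossing structure on the combined word of length $2k$ interacts with the pair partitions coming from each half, and verifying uniformity of all estimates in the random matrices $D_N^1, D_N^2$ via the deterministic norm bound. Beyond this, the identification of the limit as $\varphi(p(s,d_1,d_2))$ with $s$ freely independent of $(d_1,d_2)$ is a purely symbolic matter once the non-crossing moment structure is established.
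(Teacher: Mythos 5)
Your statement is one the paper does not prove at all: it is imported verbatim from \cite{CHHS} (Fact A.4) and recorded in the appendix only for completeness, so there is no in-paper argument to compare against. Your sketch is the standard asymptotic-freeness proof for a Gaussian Wigner matrix against independent, norm-bounded, jointly convergent matrices -- condition on $(D_N^1,D_N^2)$, expand mixed moments in coordinates, apply Wick's formula, kill crossing pairings by index counting using $\|D_N^i\|\le C$, identify the surviving non-crossing sum with the moment--cumulant expansion of a semicircular element free from $(d_1,d_2)$ (after a free-product enlargement of $(\mathcal{A},\varphi)$, which is what ``possibly after expansion'' refers to), and upgrade to almost-sure convergence via a variance bound of order $N^{-2}$ and Borel--Cantelli -- and this is precisely the route behind the cited fact (compare the proof of Theorem 5.4.5 in \cite{anderson2010introduction}). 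Two points to tighten if you write this out in full. First, in the variance step the exact cancellation against the square of the conditional expectation removes the contribution of \emph{all} Wick pairings that do not connect the two copies of the word, not merely those that are non-crossing on each half; the surviving connected pairings are then what give the $O(N^{-2})$ bound, uniformly in $(D_N^1,D_N^2)$ thanks to the norm hypothesis, so your conclusion stands but the bookkeeping description should be corrected. Second, for the unconditional almost-sure statement you implicitly use that the whole Gaussian array is independent of the whole sequence $(D_N^1,D_N^2)_{N\ge 1}$; with the variance bound uniform in the $D$'s this is harmless, since the conditional Chebyshev estimates are summable before averaging over the $D$'s, but it is worth stating. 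Also recall that the limit of the conditional expectations is itself random through the $D$'s and converges almost surely by the assumed convergence of $\frac{1}{N}\Tr\left(q(D_N^1,D_N^2)\right)$, so the two almost-sure statements must be combined at the end.
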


\begin{lemma}[Wick's formula]\label{lemma:Wick}
Let $(X_1, X_2, \ldots, X_n)$ be a real Gaussian vector, then, and $\mathcal{P}_2(k)$ the set of pair partitions of $[k]$. Then, for any $1\le k\le n$, 
\begin{equation}\label{eq:Wick}
\E[X_{i_1}\cdots X_{i_k}]= \sum_{\pi\in \mathcal P_2(k)} \prod_{(r,s)\in \pi} \E[ X_{i_r} X_{i_s}]\,.
\end{equation}
\end{lemma}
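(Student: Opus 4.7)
The plan is to prove Wick's formula by induction on $k$, with the inductive step driven by the Gaussian integration-by-parts identity (Stein's lemma). Since the formula as stated is trivially $0=0$ for odd $k$ (the right-hand side is empty because $\mathcal{P}_2(k) = \emptyset$ and the left-hand side vanishes by the symmetry $X \leftrightarrow -X$ of the centered Gaussian law), I would reduce immediately to the case where $k$ is even. I would also remark that one may assume $(X_1,\ldots,X_n)$ is centered: otherwise, since for odd $k$ the LHS is nonzero in general, the formula as displayed tacitly refers to centered Gaussian vectors, which is the only case needed in Section \ref{sec:moment method Laplacian}.

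The core analytic ingredient is the following Stein-type identity: for a centered Gaussian vector $X=(X_1,\ldots,X_n)$ with covariance $\Sigma$ and any polynomial (or sufficiently nice) $f:\Rr^n\to\Rr$,
\begin{equation}\label{eq:stein}
\E[X_{j}\, f(X)] \;=\; \sum_{\ell=1}^n \Sigma_{j\ell}\, \E\!\left[\tfrac{\partial f}{\partial x_\ell}(X)\right].
\end{equation}
I would prove \eqref{eq:stein} first. The cleanest route is to start with the scalar case ($n=1$, $X\sim \gauss(0,\sigma^2)$), where integration by parts against the density $\phi_\sigma$ gives $\E[Xf(X)] = \sigma^2 \E[f'(X)]$ directly. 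Then, writing a general centered Gaussian vector as $X = \Sigma^{1/2} Z$ with $Z\sim \gauss(0,\Id_n)$, applying the chain rule and the scalar identity coordinatewise, and reassembling the result, yields \eqref{eq:stein}. (An equivalent derivation uses the characteristic function $\phi_X(t)=\exp(-\tfrac12 t^T\Sigma t)$ and matches Taylor coefficients.)

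With \eqref{eq:stein} in hand, the induction is immediate. The base case $k=2$ is just the definition: $\mathcal{P}_2(2)$ has a single element $\{(1,2)\}$, and both sides equal $\E[X_{i_1}X_{i_2}]$. For the inductive step at even $k\geq 4$, I would apply \eqref{eq:stein} with $j=i_1$ and $f(X)=X_{i_2}\cdots X_{i_k}$. Since $\partial f/\partial x_\ell = \sum_{s=2}^k \one_{\{i_s = \ell\}} \prod_{t\neq s,\, t\geq 2} X_{i_t}$, the sum over $\ell$ collapses and yields
\begin{equation}\label{eq:step}
\E[X_{i_1}X_{i_2}\cdots X_{i_k}] \;=\; \sum_{s=2}^{k}\E[X_{i_1}X_{i_s}]\,\E\!\left[\prod_{t\in[k]\setminus\{1,s\}} X_{i_t}\right].
\end{equation}
By the induction hypothesis applied to the $(k-2)$-fold expectation (which is legitimate because $k-2$ is again even), each such term equals $\sum_{\pi'\in\mathcal{P}_2([k]\setminus\{1,s\})}\prod_{(r,r')\in\pi'}\E[X_{i_r}X_{i_{r'}}]$. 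Appending the pair $(1,s)$ then sweeps out, as $s$ ranges over $\{2,\ldots,k\}$, exactly the partitions of $[k]$ into pairs, each counted once (the pair containing $1$ is determined by $s$). This gives \eqref{eq:Wick} and closes the induction.

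The main (minor) obstacle is the verification of \eqref{eq:stein} itself: one must justify the boundary terms in the integration by parts vanish, which is routine since $X$ has Gaussian tails and $f$ is polynomial, but the cleanest presentation requires the whitening trick $X=\Sigma^{1/2}Z$ and the chain rule for partial derivatives. Everything else is bookkeeping on pair partitions.
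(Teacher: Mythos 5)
Your proposal is correct: the Stein/integration-by-parts identity $\E[X_j f(X)]=\sum_\ell \Sigma_{j\ell}\E[\partial_\ell f(X)]$, combined with induction on even $k$ and the observation that fixing the partner of the index $1$ enumerates the pair partitions of $[k]$ exactly once, is a complete and standard derivation of Wick's formula; the product-rule computation of $\partial f/\partial x_\ell$ remains valid even when indices repeat, so no extra care is needed there. There is, however, nothing in the paper to compare it against: Lemma \ref{lemma:Wick} is stated in the appendix purely for completeness, as a quoted classical fact (Isserlis/Wick), and no proof is given there. So your argument is an independent, self-contained proof rather than an alternative to the paper's route. Two minor points worth flagging if you were to include it: the statement as printed omits the hypothesis that the Gaussian vector is centered, which your proof (rightly) assumes and which is the only case used in Section \ref{sec:moment method Laplacian}; and in justifying \eqref{eq:stein} it suffices to treat polynomial $f$, for which the whitening $X=\Sigma^{1/2}Z$ plus the scalar integration by parts is indeed enough, with the boundary terms vanishing because polynomials are dominated by the Gaussian tails.
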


\begin{definition}[Graph associated to a partition,~{\citet[Definition 2.3]{avena}}]\label{pap1-partitiongraph}  For a fixed $k\geq 1$, let $\gamma$ denote the cyclic permutation $(1,2,\ldots,k)$. For a partition $\pi$, we define $G_{\gamma\pi}=(\Ver_{\gamma\pi}, E_{\gamma\pi})$ as a rooted, labelled directed graph associated with any partition $\pi$ of $[k]$, constructed as follows.
    \begin{itemize}
        \item Initially consider the vertex set $\Ver_{\gamma\pi}=[k]$ and perform a closed walk on $[k]$ as $1\to 2\to 3\to \cdots \to k\to 1$ and with each step of the walk, add an edge. 
        \item Evaluate $\gamma\pi$, which will be of the form $\gamma\pi = \{V_1,V_2,\ldots,V_m\}$ for some $m\geq 1$ where $\{V_i\}_{1\leq i\leq m}$ are disjoint blocks. Then, collapse vertices in $\Ver_{\gamma\pi}$ to a single vertex if they belong to the same block in $\gamma\pi$, and collapse the corresponding edges. Thus, $\Ver_{\gamma\pi} = \{V_1,\ldots,V_m\}$. 
        \item Finally root and label the graph as follows. 
        \begin{itemize}
            \item \emph{Root:} we always assume that the first element of the closed walk (in this case `1') is in $V_1$, and we fix the block $V_1$ as the root. 
            \item \emph{Label:} each vertex $V_i$ gets labelled with the elements belonging to the corresponding block in $\gamma\pi$.
        \end{itemize}
        \end{itemize} 
\end{definition}

\bibliographystyle{abbrvnat}
\bibliography{reference.bib}
\end{document}